\numberwithin{equation}{section}
\theoremstyle{plain}
\newtheorem{theorem}{Theorem}[section]
\newtheorem{theoremletter}{Theorem}
\newtheorem{lemma}[theorem]{Lemma}
\newtheorem{lemmaletter}[theoremletter]{Lemma}
\newtheorem{corollary}[theorem]{Corollary}
\newtheorem{proposition}[theorem]{Proposition}
\newtheorem{propositionletter}[theoremletter]{Proposition}
\theoremstyle{remark}
\newtheorem{remark}[theorem]{Remark}
\newtheorem{example}[theorem]{Example}
\theoremstyle{definition}
\newtheorem{definition}[theorem]{Definition}
\newtheorem{definitionletter}[theoremletter]{Definition}
\renewcommand{\epsilon}{\varepsilon}
\renewcommand{\theta}{{\vartheta}}
\renewcommand{\rightarrow}{\to}
\newcommand{\dx}{\,\mathrm{d}x}
\newcommand{\dy}{\,\mathrm{d}y}
\newcommand{\dxi}{\,\mathrm{d}\xi}
\newcommand{\dsigma}{\,\mathrm{d}\varsigma}
\newcommand{\dtau}{\,\mathrm{d}\tau}
\newcommand{\dxdy}{\,\mathrm{d}x\mathrm{d}y}
\newcommand{\dive}{\mathrm{div}}
\DeclareMathOperator{\supp}{supp}
\DeclareMathOperator{\sgn}{sgn}
\newcommand{\rad}{\mathrm{rad}}
\newcommand{\loca}{\operatorname{loc}}
\title[Fractional scalar field equations]{Concentration-compactness principle for nonlocal scalar field equations with critical growth}
\author[J.M.\ do \'O]{Jo\~ao Marcos do \'O}
\author[D.~Ferraz]{Diego Ferraz}
\address[J.M. do \'O]{Department of Mathematics,
Federal University of Para\'{\i}ba
\newline\indent
58051-900, Jo\~ao Pessoa-PB, Brazil}
\email{\href{mailto:jmbo@pq.cnpq.br}{jmbo@pq.cnpq.br}}
\address[D.~Ferraz]{Department of Mathematics,
	Federal University of Para\'{\i}ba
	\newline\indent
	58051-900, Jo\~ao Pessoa-PB, Brazil}
\email{\href{mailto:diego.ferraz.br@gmail.com}{diego.ferraz.br@gmail.com}}
\thanks{Research supported in part by INCTmat/MCT/Brazil, CNPq and CAPES/Brazil}
\subjclass[2000]{35J60 (35J20 47J30 49J35 35B33)}
\keywords{Scalar field equation, Fractional Laplacian, Concentration-compactness}
\begin{document}

%
%

\begin{abstract}
The aim of this paper is to study a concentration-compactness principle for homogeneous fractional Sobolev space $\mathcal{D}^{s,2} (\mathbb{R}^N)$ for $0<s<\min\{1,N/2\}.$ As an application we establish Palais-Smale compactness for the Lagrangian associated to the fractional scalar field equation $(-\Delta)^{s} u = f(x,u)$ for $0<s<1.$ Moreover, using an analytic framework based on $\mathcal{D}^{s,2}(\mathbb{R}^N),$ we obtain the existence of ground state solutions for a wide class of nonlinearities in the critical growth range.
\end{abstract}

\maketitle




%
%

\section{Introduction}
The main goal of the present work is to analyze a concentration-compactness principle for homogeneous fractional Sobolev spaces. As an application, we address questions on compactness of the associated energy functional to the following nonlocal scalar field equation
\begin{equation}\label{P}
(-\Delta)^{s} u = f(x,u)\quad \text{in }\mathbb{R}^N,
\tag{$\mathcal{P}_{s}$}
\end{equation}
where $0<s<1,$ and the nonlinearity $f(x,t)$ is supposed to be a \textit{asymptotic self-similar function} (see Sect. 3.1 for the precise definition), which is a variation of the critical nonlinearity. Here $(-\Delta)^{s}$ is the fractional Laplacian defined by the relation
\begin{equation*}
	\mathscr{F} \left((-\Delta )^{s} u \right) (\xi) = \left|\xi\right| ^{2s} \mathscr{F}u (\xi),\ \xi \in \mathbb{R}^N,
\end{equation*}
where  $\mathscr{F}u$ is the Fourier transform of $u,$ i.e.
\begin{equation}\label{fourierdef}
	\mathscr{F} u (x)= \frac{1}{(2\pi)^{N/2}} \int _{\mathbb{R}^N} u (\xi) e ^{- i \xi \cdot x} \dxi,\ x \in \mathbb{R}^N.
\end{equation}
Let $\mathscr{S}$ be the Schwartz space consisting of rapidly decaying $C^\infty$ functions in $\mathbb{R}^N$ which, together with all their derivatives, vanish at the infinity faster than any power of $|x|.$ Equivalently, if $u\in \mathscr{S}$ the fractional Laplacian of $u$ can be computed by the following singular integral
\begin{equation*}\label{lap_frac}
(-\Delta )^{s} u (x) = C(N,s) \lim _{\varepsilon \rightarrow 0 ^{+} } \int _{ \mathbb{R}^N \setminus B_{\varepsilon } (0) } \frac{u(x) - u(y)}{|x-y|^{N + 2s}}\dy,
\end{equation*}
for a suitable positive normalizing constant
\begin{equation*}
C(N,s) = \left(\int _{\mathbb{R}^N} \frac{1 - \cos \varsigma _1 }{|\varsigma| ^{N+2s}} \dsigma\right)^{-1}.
\end{equation*}
We refer to \cite{landkof,silvestre,hitchhiker} for an introduction to the fractional Laplacian operator.

During the past years there has been a considerable amount of research involving nonlocal nonlinear stationary Schr\"{o}dinger problems. This equation arises in the study of the fractional Schr\"{o}dinger equation when looking for standing waves. Indeed, when $u$ is a solution of Eq.~\eqref{P}, it can be seen as stationary states (corresponding to solitary waves) in nonlinear equations of Schr\"{o}dinger type
\begin{equation*}
i \phi _t - (- \Delta) ^s \phi  +  f(x,\phi)=0\quad\text{in } \mathbb{R}^N.
\end{equation*}
Fractional Schr\"{o}dinger equations are also of interest in quantum mechanics (see e.g. the appendix in \cite{delpino} for details and physical motivations). Moreover, we refer to \cite{levy2}, \cite{levy1} and \cite{levy3}, where equations involving the operator $(- \Delta) ^s$ arise from several areas of science such as biology, chemistry or finance.

A lot of work has been devoted to the existence of solutions for nonlinear scalar field equations like Eq.~\eqref{P}, both for local case $(s=1)$  and nonlocal case $0<s<1,$ since the celebrated works of H.~Berestycki and P.-L. Lions \cite{berlions,berlions-II}. 
In these two papers, the authors discuss the existence of radial solutions of the semi-linear elliptic equation 
\begin{equation}\label{BL-eq}
-\Delta u = g(u), \quad u\in H^1(\mathbb{R}^N)(N \geq 3),
\end{equation}
where $g:\mathbb{R}\rightarrow \mathbb{R}$ is a continuous odd function with subcritical growth. Under some appropriate conditions on $g(t)$, they used minimizing arguments to prove (in part I) the existence of a positive radial ground state for \eqref{BL-eq}, that is, a solution having the property of the least action among all possible solutions. In \cite{tintapaper}, K. Tintarev  has treated the non-autonomous problem $-\Delta u = g(x,u)$ in $\mathbb{R}^N(N \geq 3)$, $u\in \mathcal{D}^{1,2}(\mathbb{R}^N),$ when the nonlinearity $g(x,t)$ is allowed to have critical growth with asymptotically self-similar oscillations about the critical power $|t|^{2^*-2}t$. Recently, using some minimax arguments, X.~Chang and Z-Q.~Wang \cite{chang-wang} proved  the existence of a positive ground state for fractional scalar field equations of the form $(-\Delta )^{s}u = g(u)$ in $\mathbb{R}^N (N\geq 2), \, s \in (0,1),$ when $g(t)$ has subcritical growth and satisfies the Berestycki--Lions type assumptions. In \cite{DOO-SQUASSINA-ZHANG}, J.~M.~do~\'{O} et al., established the existence of ground state solutions to the fractional scalar field equation $(-\Delta )^{s}u = g(u)$ in $\mathbb{R}^3, \, s \in (0,1),$ when $g(t)$ has critical growth.

Motivated by the results cited above, another important purpose of this work is to prove the existence of ground state solutions for the nonlinear scalar field equation \eqref{P} in the ``zero mass case'' with nonlinearities in the critical growth range. It is well known that Eq.~\eqref{P} admits a variational setting in fractional Sobolev spaces, and the solutions are constructed with a variational method by a minimax procedure on the associated energy functional. However, we note that the usual variational techniques cannot be applied straightly because of a lack of compactness, which roughly speaking, originates from the invariance of $\mathbb{R}^N$ with respect to translation and dilation and, analytically, appears because of the non-compactness of the Sobolev embedding. For instance, it is not possible to apply the minimax type arguments used by P.~Felmer et al. \cite{fqt} and R.~Servadei and E.~Valdinoci \cite{bounded,servadei_MP,brezis-niremberg} (where some new techniques of nonlinear analysis have been introduced to solve nonlocal equations) because their approach rely strongly on the sub-criticality of the nonlinear terms or the  boundedness of the domain.
To overcome these difficulties, under appropriate assumptions, we establish a profile decomposition for bounded sequences of suitable Sobolev spaces, which is based in the profile given in \cite{palatucci}, proving that Palais-Smale sequences of the associated energy functional converges, up to a subsequence, in a similar way to the global compactness studied in \cite{global_pala,global_brasco}.

The idea for proving the existence of ground state solution for Eq.~\eqref{P} in the autonomous case is based in a constrained minimization argument similar to \cite{berlions}.  We obtain the result by using the invariance of the problem with respect to action of the translation and dilation group in $\mathbb{R}^N,$ thanks to our concentration-compactness principle and a specific Pohozaev identity.
Our argument allow us to avoid the typical assumption that $t^{-1}f(x,t)$ is an increasing function, which is usually required in the approach of constrained minimization over a Nehari manifold. Moreover, to prove the existence for the autonomous case $f(x, t) = f(t)$, we do not require the well known Ambrosetti-Rabinowitz condition.


The existence for the general case of Eq.~\eqref{P} is obtained from the autonomous case which was derived from a class of Pohozaev identity for the space $\mathcal{D}^{s,2}(\mathbb{R}^N)$. The proof of this identity is essentially based in the use of the so called $s$-harmonic extension introduced by L. Caffarelli and L. Silvestre \cite{caf_silv} (see also \cite{molvc}, where a similar procedure was introduced by using probabilistic methods) and remarks contained in \cite{moustapha} and \cite{frac_niremberg}.

Our main results may be seen as the nonlocal counterpart of some theorems of K.~Tintarev et al. \cite{tinta_pos,tintapaper,tintabook}. In comparison with the local case \cite{tintapaper}, we also mention some additional difficulties: the Pohozaev type identities for the fractional framework available in the literature (cf. \cite{chang-wang,frank,rosoton}) do not match with our settings; an additional hypothesis (assumption \eqref{f_extra}) must be considered in order to achieve the concentration-compactness for the non-autonomous case. In fact, the asymptotic additivity \eqref{f_extra} takes the role to describe precisely the behavior of weak convergence under our settings (Proposition \ref{lemma2}). At this point a natural question arises: Is hypothesis \eqref{f_extra} necessary to describe the limit of the profile decomposition terms (see Theorem~\ref{teo_tinta_frac})? Indeed, we believe that without condition \eqref{f_extra} it is possible to find examples for which this description fails.

To the best of our knowledge, this is the first work that shows a Pohozaev type identity for the homogeneous Sobolev space $\mathcal{D}^{s,2}(\mathbb{R}^N)$ and for $f(t)$ in the critical growth range. Our method is very convenient in the sense that with our arguments we can always derive a Pohozaev type identity in the fractional framework without relying in global regularization of the solutions. In the present literature, there are only Pohozev type identities for solutions in the inhomogeneous fractional Sobolev space $H^s(\mathbb{R}^N),$ $0<s<\min\{1,N/2\},$ and for $f(t)$ with subcritical growth (cf. \cite{chang-wang}).  Moreover, the argument for the proof relies in obtaining the behavior of solutions in the whole space $\mathbb{R}^N$  (cf. \cite{frank}).

In addition, we introduce a new class of nonlinearities of the critical growth type for the fractional framework, that does not satisfies that $t^{-1}f(x,t)$ is an increasing function and include the power $|t|^{2_s ^\ast-2}t$ as an example. We believe that this new notion of criticality together with our concentration-compactness, can lead to a new way to approach elliptic problems involving nonlinearities with critical growth and the fractional Laplacian, for instance, replacing the well known nonlinearity $f(x,t)=K(x)|t|^{2_s ^\ast -2}t$ for a general \textit{self-similar} function under our settings (see Sect. \ref{s_Hypothesis}) when dealing with general equations that involves critical growth.

Moreover, as it is well known, one of the main difficulties in leading with nonlinearities with critical growth condition is proving that the minimax level of the functional associated to Eq.~\eqref{P} avoids levels of non-compactness, which usually requires additional description of the nonlinearity growth. We avoid this by considering that $f(x,t)$ has appropriated limits consistent with our concentration-compactness and comparing the minimax level of functional associated to Eq.~\eqref{P} with the limit ones.
\subsection{Outline} The paper is organized as follows. Sect.~\ref{Weak Convergence} is devoted to the description of the profile decomposition of bounded sequence in the Sobolev space $\mathcal{D}^{s,2}(\mathbb{R}^N)$. In Sect.~\ref{Application}, we give some applications of Theorem~\ref{teo_tinta_frac} to study the existence of mountain-pass solutions of Eq.~\eqref{P}, for the autonomous and non-autonomous case. In Sect.~\ref{Preliminaries}, we state some basic results (without prove) on the fractional Sobolev space $\mathcal{D}^{s,2}(\mathbb{R}^N)$. We also prove a Pohozaev identity for weak solutions of Eq.~\eqref{P} and establish the background material to develop the profile decomposition described in Sect. \ref{Weak Convergence}. In Sect.~\ref{assumptions}, we prove our concentration-compactness result. In Sect.~\ref{self-similar_functions}, we study the basic properties for a class of nonlinearities in the critical growth range dealt in this paper, which is fairly used to establish the results regarding the Eq. \eqref{P}. In Sect.~\ref{onbehaviour}, using the properties obtained in the Sect. \ref{self-similar_functions}, we describe the limit of the profile decomposition of the Palais-Smale sequence at the mountain pass level of the Lagrangian of Eq.~\eqref{P}. In Sect.~\ref{asfec}, we prove the results given in Subsect. \ref{stat_auto} and describe some properties regarding the minimax levels associated with the functional energy of Eq.~\eqref{P}, for the autonomous case. In Sect.~\ref{tnac}, we prove our result about the existence of non-trivial weak solution of Eq.~\eqref{P} in the non-autonomous case, and for the sake of discussion, we establish a sufficient condition that ensures one of our hypothesis, precisely, the assumption \eqref{suficient}.

\section{Profile decomposition in $\mathcal{D}^{s,2}(\mathbb{R}^N)$}\label{Weak Convergence}
For $0<s<\min\{1,N/2\},$ let us consider the homogeneous fractional Sobolev space $\mathcal{D}^{s,2}(\mathbb{R}^N),$ which is defined as the completion of $C ^{\infty } _0 (\mathbb{R}^N)$ under the norm 
\begin{equation*}
\|u\|^2 :=  \int _{\mathbb{R}^N} |\xi|^{2s} \left |\mathscr{F} u \right| ^2 \dxi.
\end{equation*}
It is well know that $\mathcal{D}^{s,2}(\mathbb{R}^N)$ is continuous embedded in $L^{2^\ast_s}(\mathbb{R}^N),$ $0<s<\min\{1,N/2\},$ where $2_s^\ast = 2N/(N-2s).$ The aforementioned concentration-compactness is made by means of profile decomposition for bounded sequences in homogeneous fractional Sobolev spaces, which can be considered as extensions of the Banach-Alaoglu theorem. 
This kind of profile decomposition has been widely investigated in various settings, for instance we may cite the ones in
\cite{solimini,gerard,jaffard,struwe,palatucci}. It describes how the convergence of a bounded sequence fails in the considered space. In \cite{gerard} P. G\'{e}rard introduced the subject in the fractional framework, and in \cite{palatucci} G. Palatucci and A. Pisante studied this matter based in the abstract version of profile decomposition in Hilbert spaces due to K.~Schindler and K.~Tintarev \cite{tintabook}. Moreover, in \cite{palatucci} the problem of the cocompactness embedding of $\mathcal{D}^{s,2}(\mathbb{R}^N)$ in $L^{2^\ast_s} (\mathbb{R}^N)$ with respect to the group of dilations and translations in the sense of \cite{cocompact} (see \cite[Proposition 1]{palatucci}) was extensively discussed. Our result stated here is based in the abstract approach made in \cite{palatucci,tintabook} with some adjustments.
\begin{theorem}\label{teo_tinta_frac}
	Let $(u_k) \subset \mathcal{D}^{s,2}(\mathbb{R}^N)$ be a bounded sequence, $\gamma > 1$ and $0<s<\min\{1,N/2\}.$ Then there exists $\mathbb{N}_{\ast } \subset \mathbb{N},$ disjoints sets (if non-empty) $\mathbb{N} _{0}, \mathbb{N} _{- }, \mathbb{N}_ {+} \subset \mathbb{N}, $ with $\mathbb{N}_{\ast } = \mathbb{N} _{0} \cup  \mathbb{N}_ {+} \cup \mathbb{N} _{-} $ and sequences $(w ^{(n)}) _{n \in \mathbb{N}_{\ast } } \subset \mathcal{D}^{s,2}(\mathbb{R}^N),$ $(y_k ^{(n)}) _ {k \in \mathbb{N}} \subset \mathbb{Z}^N,$ $(j _k ^{(n)}) _{k \in \mathbb{N}}  \subset \mathbb{Z},$ $n \in \mathbb{N}_{\ast },$ such that, up to a subsequence of $(u_k),$
	\begin{align}
	\qquad \qquad &\gamma ^{-\frac{N-2s}{2}j_k ^{(n)}}u_k \big(\gamma ^{- j_k ^{(n)}} \cdot + y_k^{(n)} \big)\rightharpoonup w^{(n)} \text{ as } k \rightarrow \infty,\mbox{ in } \mathcal{D}^{s,2}(\mathbb{R}^N),&\label{seis.um}\\ 
	\qquad \qquad &\big|j_k ^{(n)} - j_k ^{(m)}\big|+\big|\gamma ^ {j_k ^{(n)}}( y^{(n)}_k - y^{(m)}_k )\big| \rightarrow \infty , \text{ as } k \rightarrow \infty,\text{ for } m \neq n,&\label{seis.dois} \\ 
	\qquad \qquad &\sum _{n \in \mathbb{N}_{\ast}}\big \| w^{(n)} \big \|^2  \leq \limsup_k \|u_k \| ^2,&\label{seis.tres} \\ 
	\qquad \qquad & u_k - \sum _{n \in \mathbb{N}_{\ast }} \gamma ^{\frac{N-2s}{2} j_k ^{(n)} } w^{(n)}\big(\gamma^{j _k ^{(n)}} ( \cdot - y_k^{(n)} ) \big) \rightarrow  0, \text{ as } k \rightarrow \infty, \text{ in } L^{2^{\ast } _s}(\mathbb{R}^{N}),&\label{seis.quatro}
	\end{align}
	and the series in \eqref{seis.quatro} converges uniformly in $k.$ Furthermore, $1 \in \mathbb{N} _0,$ $y_k ^{(1)} = 0;$ $j _k ^{(n)} = 0 $ whenever $n \in \mathbb{N}_0;$ $j_k ^{(n)} \rightarrow -\infty$ whenever $n \in \mathbb{N}_{- };$ and $j_k ^{(n)} \rightarrow +\infty$ whenever $n \in \mathbb{N}_{ +}.$	
\end{theorem}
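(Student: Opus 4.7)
The plan is to invoke the abstract Hilbert-space profile decomposition of Schindler--Tintarev (cf.\ \cite{tintabook}) applied to the group $D_\gamma$ of isometries on $\mathcal{D}^{s,2}(\mathbb{R}^N)$ generated by dilations of factor $\gamma^{\mathbb{Z}}$ and by $\mathbb{Z}^N$-translations, and then to upgrade the conclusion from weak convergence in $\mathcal{D}^{s,2}$ to strong convergence in $L^{2^{\ast}_s}$ by means of the cocompact embedding of $\mathcal{D}^{s,2}(\mathbb{R}^N)$ into $L^{2^{\ast}_s}(\mathbb{R}^N)$ proved in \cite[Proposition 1]{palatucci}. That the maps $u\mapsto \gamma^{-\frac{N-2s}{2}j}u(\gamma^{-j}\cdot+y)$ are unitary on $\mathcal{D}^{s,2}(\mathbb{R}^N)$ follows immediately from the Fourier-side change of variables combined with the definition of the norm.

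The construction of the profiles proceeds iteratively. First I would extract from $(u_k)$ a weakly convergent subsequence in $\mathcal{D}^{s,2}$ and call its limit $w^{(1)}$, taking $j_k^{(1)}=0$ and $y_k^{(1)}=0$ so that $1\in \mathbb{N}_0$. Having produced $w^{(1)},\dots,w^{(n-1)}$, form the remainder
\begin{equation*}
r_k^{(n)} := u_k - \sum _{m =1}^{n-1}\gamma^{\frac{N-2s}{2}j_k^{(m)}} w^{(m)}\bigl(\gamma^{j_k^{(m)}}(\cdot-y_k^{(m)})\bigr),
\end{equation*}
and consider the supremum of $\|w\|^2$ over all weak limits $w$ that can be obtained by applying sequences in $D_\gamma$ to $r_k^{(n)}$; if this supremum is strictly positive, select $(j_k^{(n)}, y_k^{(n)})$ essentially realizing it, pass to a subsequence so that the corresponding rescaling in \eqref{seis.um} converges weakly to some $w^{(n)}\neq 0$, and iterate. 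The asymptotic orthogonality \eqref{seis.dois} is enforced by the maximality of the selection: its failure would mean that the composition of the two associated group elements converges weakly to the identity along a subsequence, and in that case the profile $w^{(m)}$ would already have absorbed $w^{(n)}$ at step $m$. The Pythagorean-type bound \eqref{seis.tres} then follows by expanding $\|r_k^{(n)}\|^2$ and using the isometry of the group elements together with \eqref{seis.dois}.

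For \eqref{seis.quatro}, the bound \eqref{seis.tres} combined with the boundedness of $(u_k)$ forces the maximal weak-limit supremum to tend to $0$, so the tail of the remainder is $D_\gamma$-weakly vanishing in the sense of \cite{cocompact}; the cocompactness in \cite[Proposition 1]{palatucci} then yields its strong convergence to $0$ in $L^{2^{\ast}_s}(\mathbb{R}^N)$. The uniformity in $k$ of the series follows from the continuous Sobolev embedding $\mathcal{D}^{s,2}(\mathbb{R}^N)\hookrightarrow L^{2^{\ast}_s}(\mathbb{R}^N)$, the summability \eqref{seis.tres}, and a standard diagonal argument. Finally, the partition of $\mathbb{N}_\ast$ is produced by a further subsequence extraction: for each $n$ either $(j_k^{(n)})_k$ is bounded, in which case it is eventually constant and the constant dilation can be absorbed into a redefinition of $w^{(n)}$ and $y_k^{(n)}$ so that $j_k^{(n)}=0$ and $n\in\mathbb{N}_0$, or it is unbounded, in which case one refines further so that $j_k^{(n)}\to +\infty$ (placing $n$ in $\mathbb{N}_+$) or $j_k^{(n)}\to -\infty$ (placing $n$ in $\mathbb{N}_-$).

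The main obstacle I expect is the interplay between the abstract extraction and the analytic cocompactness. The parameter divergence \eqref{seis.dois} must be strong enough to decouple successive profiles (so that \eqref{seis.tres} can be summed), yet one also needs the leftover sequence to be $D_\gamma$-weakly null in a sense precisely matched to the dilation–translation cocompactness available in $\mathcal{D}^{s,2}(\mathbb{R}^N)\hookrightarrow L^{2^{\ast}_s}(\mathbb{R}^N)$; making these two notions coincide is the delicate point where the abstract scheme of \cite{tintabook} has to be adjusted to the discrete group $D_\gamma$ of \cite{palatucci}, and where the hypothesis $\gamma>1$ enters through the generator of the dilation subgroup.
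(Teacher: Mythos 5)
Your general scheme matches the paper's: verify that the discrete dilation--translation group gives a dislocation space in the sense of \cite{tintabook}, apply (or re-derive iteratively) the abstract Hilbert-space profile decomposition, and then upgrade the $D$-weak vanishing of the remainder to strong $L^{2^{\ast}_s}$ convergence via a cocompactness result. The iterative extraction, the Pythagorean expansion giving \eqref{seis.tres}, and the final diagonal/subsequence extraction to sort indices into $\mathbb{N}_0$, $\mathbb{N}_+$, $\mathbb{N}_-$ are all sound and essentially as in the paper. However, there is a genuine gap in the cocompactness step, and it is precisely the obstacle you flag at the end but do not resolve. You invoke \cite[Proposition 1]{palatucci} to pass from ``$D_\gamma$-weakly null remainder'' to ``$L^{2^{\ast}_s}$-null remainder,'' but Palatucci--Pisante's cocompactness result concerns the \emph{continuous} group $D_{\mathbb{R}^N}$ of all translations and $\lambda$-dilations, whereas the abstract scheme applied to the discrete group $D_\gamma = D_{\mathbb{Z}^N,\mathbb{Z}}$ produces a remainder that is only $D_\gamma$-weakly null. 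Since $D_\gamma$ is a proper subgroup of $D_{\mathbb{R}^N}$, $D_\gamma$-weak nullity is an a priori \emph{weaker} condition (fewer operator sequences to test against), so the continuous cocompactness cannot be applied as stated.

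The paper closes this gap by proving Proposition \ref{cocompact}, a discrete-group cocompactness statement: for $0<s<\min\{1,N/2\}$, a bounded sequence $(u_k)$ in $\mathcal{D}^{s,2}(\mathbb{R}^N)$ satisfies $u_k \stackrel{D_\gamma}{\rightharpoonup} 0$ if and only if $u_k\to 0$ in $L^{2^{\ast}_s}(\mathbb{R}^N)$. Its proof is a direct Lions-type argument: one introduces a fixed cutoff $\xi$ adapted to the multiplicative scale $\gamma^{\frac{N-2s}{2}}$, decomposes $\mathbb{R}^N$ into unit cubes $Q_z$ (to extract a translation) and decomposes the range of $|u_k|$ into dyadic-in-$\gamma$ level sets (to extract a discrete dilation), and uses the interpolation-type inequality from the fractional Gagliardo seminorm together with the compact local embedding $H^s\hookrightarrow L^2_{\mathrm{loc}}$ (valid precisely for $0<s<1$) to drive the $L^{2^{\ast}_s}$ norm to zero. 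Only as a by-product does one obtain, combining with \cite[Proposition 1]{palatucci}, the equivalence of discrete and continuous $D$-weak convergence. Without this proposition, or an equivalent argument bridging the discrete and continuous groups, your derivation of \eqref{seis.quatro} does not go through, so this is the one missing ingredient you would need to supply.
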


This decomposition is unique up to permutation of indices, and up to constant operator multiples (See \cite[Proposition 3.4]{tintabook}).
As it can be seen, Theorem \ref{teo_tinta_frac} describes how the convergence of bounded sequences in $\mathcal{D}^{s,2}(\mathbb{R}^N)$ fail to converge in $L^{2_s^\ast}(\mathbb{R}^N)$.  This ``error'' of convergence is generated, roughly speaking, by the invariance of action of the group of translation and dilation in $\mathcal{D}^{s,2}(\mathbb{R}^N).$ Observe that the behavior for the correction term in \eqref{seis.quatro} is precisely described in the assertions \eqref{seis.um}--\eqref{seis.tres}.

Here we point out differences from our Theorem \ref{teo_tinta_frac} and some results on profile decompositions contained in \cite{gerard,palatucci}. The decomposition in Theorem \ref{teo_tinta_frac} is based in a discrete group of operators, that is, the dilations in the following form 
\begin{equation}\label{our_dilation}
\delta_j u(x) =\gamma ^{\frac{N-2s}{2} j} u (\gamma ^j x ),\ \gamma >1,\ j \in \mathbb{Z}.
\end{equation}
From \eqref{our_dilation}, we can decompose (in a similar way as in \cite[Theorem 5.1]{tintabook}) the collection of the ``dislocated profiles'' $w^{(n)}$ in three: dilation by ``enlargement'' ($\mathbb{N}_{-}$), dilation by ``reducement'' ($\mathbb{N}_{+}$), and no dilation (pure translation $\mathbb{N}_{0}$). This allow us to study scalar field equations involving nonlinearities with critical growth more general than the pure critical power (see Sect. \ref{self-similar_functions}), the so called {\it asymptotic self-similar functions}  (assumptions \eqref{f_extra}--\eqref{f_pohozaev}, \eqref{selfsimilar}). On the other hand in \cite{gerard,palatucci} was considered continuous dilations of the form
\begin{equation*}
\delta _{\lambda} u(x) = \lambda ^{\frac{N-2s}{2} } u(\lambda x),\ \lambda >0,
\end{equation*}
and their decomposition holds for all $0<s<N/2.$ We should mention that Theorem \ref{teo_tinta_frac} holds for $0<s \leq 1$ and at this point arise a natural question which is to prove this result for $1<s<N/2.$ In \cite[Proposition 1]{palatucci} it was proved that $D_{\mathbb{R}^N}$--weak convergence is equivalent to strong convergence in $L^{2_s ^\ast} (\mathbb{R}^N),$ for $0<s<N/2$ (see Section 4.3 below), where for $\gamma > 1$ given,
\begin{equation}\label{D_erren}
D_{\mathbb{R} ^N} := \left\lbrace d_{y,j} : \mathcal{D}^{s,2} (\mathbb{R}^N) \rightarrow \mathcal{D}^{s,2} (\mathbb{R}^N) : d_{y,j} u (x) := \gamma ^{\frac{N-2s}{2} j} u (\gamma ^j (x-y)),\ y \in \mathbb{R}^N, \ j \in \mathbb{R} \right\rbrace.
\end{equation}
From this we can conclude that the answer to that question is analogously to prove that $D_{\mathbb{R}^N}$--weak convergence is equivalent to the $D_{\mathbb{Z}^N}$--weak convergence in $\mathcal{D}^{s,2}(\mathbb{R}^N),$ for $1<s<N/2,$ where $D_{\mathbb{Z} ^N} := \left\lbrace d_{y,j} : y \in \mathbb{Z}^N,\ j \in \mathbb{Z} \right\rbrace.$ In the affirmative case, Theorem \ref{teo_tinta_frac} can be seen as a corollary of the decomposition given in Theorem \cite[Theorem 4, Theorem 8]{palatucci}, with minor changes (provided also in Section \ref{assumptions}). Nevertheless, for the case that $0<s<1,$ we present a proof of this fact (given in Proposition \ref{cocompact}), which can also be seen as an alternative proof of Theorem \ref{teo_tinta_frac}.

In this work we also develop techniques to obtain non-trivial weak solutions of Eq.~\eqref{P} extending the results in \cite{tintapaper} for the nonlocal case. Also, it seems for us that Theorem \ref{teo_tinta_frac} is more appropriated to study the existence of non-trivial solutions for scalar field equation  \eqref{P} than \cite[Theorem 4, Theorem 8]{palatucci} and \cite[Theorem 1.1]{gerard}. It is not clear how one can apply \cite[Theorem~4]{palatucci} to obtain such a result for nonlinearities with asymptotically self-similar oscillations about the fractional critical growth (see Sect.~\ref{s_Hypothesis} for precise definitions).



\section{Nonlinear scalar field equations}\label{Application}

In this section we state our main results regarding the existence of solutions of Eq.~\eqref{P}. In what follows, we always assume that $0<s<\min\{1,N/2\}.$



\subsection{Hypothesis}\label{s_Hypothesis}
In order to describe our results on the energy functional of \eqref{P} in a more precisely way, we will make the following assumptions:
\begin{flalign}\label{f_1}\tag{$f_1$}
   f: \mathbb{R}^N \times \mathbb{R} \rightarrow \mathbb{R}
  \text{ satisfies  the Carath\'{e}odory conditions.} &&
\end{flalign}
\begin{flalign}\label{f_2}\tag{$f_2$}
 |f(x,t)| \leq C |t| ^ {2 ^\ast _s - 1}, \;  \forall \,  t \in \mathbb{R} \text{ and a.e. } x \in \mathbb{R}^N.  &&
\end{flalign}
\begin{flalign}\label{AR} \tag{$f_3$}
  \exists \, \mu> 2; \;   \mu F(x,t) := \mu \int _0 ^t f(x, \tau)\dtau \leq f(x,t)t, \ \forall \; t \in \mathbb{R} \text{ and a.e. } x \in \mathbb{R}^N. &&
\end{flalign}
\begin{flalign}\label{posi_algum} \tag{$f_4$}
\begin{aligned}
& \exists \, (x_0,t_0) \in \mathbb{R}^N \times \mathbb{R}_+ \text{  such that} \\
& |B_R|\inf _{B_R(x_0)} F(x,t_0) + |B_{R+1}\setminus B_R| \inf_{(x,t) \in  (B_{R+1}(x_0)\setminus B_R(x_0) ) \times [0,t_0]} F(x,t) >0
\end{aligned} &&
\end{flalign}
 In the study of the autonomous case we consider a weak version of \eqref{posi_algum} which we state next.
\begin{flalign}\label{posi_algum_auto}
\exists \, t_0\in \mathbb{R}\text{ such that }F(t_0) >0.\tag{$f'_4$}&&
\end{flalign}
\begin{flalign}\label{f_extra}\tag{$f_5$}
\begin{aligned}
& \text{ For any real numbers } a_1, \ldots, a_{M}, \exists \,  C=C(M)>0 \text{ such that } \\
& \left|F\left(x,\sum _{n=1} ^{M} a_n \right) - \sum_{n=1}^{M}F(x,a_n )\right| \leq C(M) \sum _{m \neq n \in \{1,\ldots,M\}} |a_n| ^{2^{\ast} _s -1} |a_m|\ \text{ a.e. } x \in \mathbb{R}^N.
\end{aligned} &&
\end{flalign}
\begin{flalign}\label{f_5} \tag{$f_6$}
\begin{aligned}
& \exists \, \gamma>1, \;  0<s<\min\{1,N/2\} \text{ such that the following limits exists} \\
& f_0 (t) := \lim _{|x| \rightarrow \infty} f(x,t), \\
& f_+(t):= \lim _{j \in \mathbb{Z}, j \rightarrow + \infty} \gamma ^{- \frac{N+2s}{2}j} f\left(\gamma^{-j} x , \gamma ^{ \frac{N-2s}{2}j}  t \right),\\
& f_{-}(t):= \lim _{j \in \mathbb{Z}, j \rightarrow - \infty} \gamma ^{- \frac{N+2s}{2}j} f\left(\gamma ^{-j} x, \gamma ^{ \frac{N-2s}{2}j}  t \right).\\
&\text{uniformly in }x\text{ and in compact sets for }t.
\end{aligned} &&
\end{flalign}
\begin{flalign}\label{f_pohozaev} \tag{$f_7$}
 f_0,\; f_{+}, \; f_{-}  \text{ are continuously differentiable.} &&
\end{flalign}

\bigskip

We consider associated with Eq.~\eqref{P}, the functional $I :\mathcal{D}^{s,2}(\mathbb{R}^N) \rightarrow \mathbb{R}$ given by
\begin{equation}\label{energy_functional}
I(u) = \frac{1}{2}\int _{\mathbb{R}^N} |(-\Delta)^{s/2} u| ^2 \dx -  \int _{\mathbb{R}^N} F(x,u) \dx.
\end{equation}
Assuming that $f(x,t)$ satisfies \eqref{f_2} and using the same arguments of \cite{minimax}, $I \in C^1 (\mathcal{D}^{s,2}(\mathbb{R}^N))$ and
\begin{equation*}
I'(u)\cdot v= \int _{\mathbb{R}^N} (-\Delta )^{s/2} u\  (-\Delta )^{s/2 } v \dx - \int _{\mathbb{R}^N} f(x,u) v \dx,\quad u, v \in \mathcal{D}^{s,2}(\mathbb{R}^N).
\end{equation*}
Thus critical points of $I$ correspond to weak solutions of Eq.~\eqref{P} and conversely. 

Regarding the minimax level, we consider
\begin{equation*}
\Gamma _I= \left\lbrace \gamma \in C([0, \infty ),  \mathcal{D}^{s,2} (\mathbb{R}^N) ) : \gamma(0)=0, \ \lim _{t \rightarrow \infty}I(\gamma (t)) = - \infty \right\rbrace,
\end{equation*}
and
\begin{equation}\label{minimax}
c(I)= \inf_{\gamma \in \Gamma_I} \sup_{t \geq 0 } I(\gamma (t)).
\end{equation}
For the nonlinearities $f_0, f_+, f_{-}$, we consider the associated energy functionals given by
\begin{equation*}
I_\kappa(u) = \frac{1}{2}\int _{\mathbb{R}^N} \big|(-\Delta)^{s/2} u\big| ^2 \dx -  \int _{\mathbb{R}^N} F_\kappa(u) \dx, \quad F _\kappa (t):= \int _0 ^t f _\kappa(\tau)\dtau
\end{equation*}
and the respectively minimax levels
\[
c_\kappa = \inf_{\gamma \in \Gamma_{\kappa}} \sup_{t \geq 0 } I_{\kappa}(\gamma (t))
\]
where 
\begin{equation*}
\Gamma_{\kappa} = \left\lbrace \gamma \in C([0, \infty ),  \mathcal{D}^{s,2} (\mathbb{R}^N) ) : \gamma(0)=0, \ \lim _{t \rightarrow \infty}I_{\kappa}(\gamma (t)) = - \infty \right\rbrace,
\end{equation*}
for $ \kappa=0,+,-.$ Next, we assume a condition that compares the mountain pass levels defined above, precisely, for each $\kappa=0,+,-,$ there is a real number $t_\kappa $ such that 
\begin{flalign*}\label{suficient}
F_\kappa (t_\kappa ) >0 \quad \mbox{and}\quad c(I) < c(I _\kappa).\tag{$f_8$}&&
\end{flalign*}
We also consider the additional assumption for $\kappa=0,+,-$,
\begin{flalign}
&F_\kappa\text{ satisfies \eqref{posi_algum_auto}},\ F _\kappa (t) \leq  F(x,t),\ \text{a.e. } x\in\mathbb{R}^N,\; \forall \, t \in \mathbb{R}.\label{desigualdade_geral}\\
&\text{Moreover }\exists \delta >0\text{ such that } F _\kappa (t) <  F(x,t),\ \text{ a.e. } x \in\mathbb{R}^N,\; \forall \, t \in (-\delta , \delta) \label{f'_6}\tag{$f'_8$}&&
\end{flalign}
We are going to prove in Proposition \ref{minimaxprop} that \eqref{f'_6} implies \eqref{suficient}.
To obtain our main result, we first study the autonomous case. For that we assume:
\begin{flalign*}\label{selfsimilar}
\exists \, \gamma>1, \;  0<s<\min\{1,N/2\},\text{ such that } G(t)=\gamma ^{-Nj} G\left(\gamma ^{\frac{N-2s}{2}j} t \right), \forall j \in \mathbb{Z},\ \forall t \in \mathbb{R}.\tag{$f_9$} &&
\end{flalign*}
This allows us to derive some basic results concerning the above profile decomposition, which gives the behavior upon the functional $I$ as we pass the limit over the Palais-Smale sequences. We refer to the class of functions that satisfies \eqref{selfsimilar} as \textit{self-similar} (see Section \ref{self-similar_functions}).

In this paper, we often use the notation $\Phi (u) = \int_{\mathbb{R} ^N} F(x,u) \dx$ and $\Phi _\kappa(u) = \int_{\mathbb{R} ^N} F _\kappa (u) \dx$ for $ \kappa=0,+,-,$  also, we usually refer to the assumptions \eqref{f_1}--\eqref{suficient} for the autonomous case $f(x,t)=f(t).$
\subsection{Statement of main results}\label{stat_auto}
Next, we state the main results about the autonomous case $f(x,t)=f(t).$
\begin{theorem}\label{primeira}
Suppose that $f(t)$ satisfies \eqref{f_1}, \eqref{posi_algum_auto} and \eqref{selfsimilar}, and consider
\begin{equation}\label{max}
\mathcal{S}_l = \sup _{\|u\|^2 =l} \int _{\mathbb{R}^N}F (u ) \dx.
\end{equation}
Then, for any maximizing sequence $(u_k)$ of \eqref{max} there exists $(j_k) \subset \mathbb{Z}$ and $(y_k) \subset \mathbb{Z}^N$ such that $( \gamma ^{-\frac{N-2s}{2}j_k } u_k (\gamma ^{- j_k } \cdot + y_k ) )$ contains a convergent subsequence in $\mathcal{D}^{s,2} (\mathbb{R}^N)$. In particular, the supremum in \eqref{max} is attained. Moreover, the same conclusion holds for
\begin{equation*}
\mathcal{S}_{l , +} = \sup _{\|u\|^2 =l} \int _{\mathbb{R}^N}F _{+ } (u ) \dx \quad \mbox{and}\quad 
\mathcal{S}_{l , -} = \sup _{\|u\|^2 =l} \int _{\mathbb{R}^N}F _{- } (u ) \dx,
\end{equation*}
provided that $f(t)$ satisfies \eqref{f_1}, \eqref{f_2} and \eqref{f_5}, with $F_{+}, F_{-} $ fulfilling \eqref{posi_algum_auto} and $\mathcal{S} _1> \max \{\mathcal{S}_{1,+},\mathcal{S}_{1,-}\}.$
\end{theorem}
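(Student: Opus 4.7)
The plan is to apply the profile decomposition of Theorem~\ref{teo_tinta_frac} to a maximizing sequence and use the discrete scale invariance \eqref{selfsimilar} to concentrate all the mass on a single profile. First I verify $0<\mathcal{S}_l<\infty$: iterating \eqref{selfsimilar} together with continuity from \eqref{f_1} produces the critical growth bound $|F(t)|\leq C|t|^{2_s^\ast}$, so the embedding $\mathcal{D}^{s,2}(\mathbb{R}^N)\hookrightarrow L^{2_s^\ast}(\mathbb{R}^N)$ gives finiteness, while \eqref{posi_algum_auto} supplies a test function with $\Phi>0$.

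Fix a maximizing sequence $(u_k)$ with $\|u_k\|^2=l$ and apply Theorem~\ref{teo_tinta_frac}, obtaining profiles $w^{(n)}$ and parameters $(j_k^{(n)},y_k^{(n)})$. A direct change of variables using \eqref{selfsimilar} shows that both $\|\cdot\|$ and $\Phi$ are invariant under $u\mapsto \gamma^{\frac{N-2s}{2}j}u(\gamma^j(\cdot-y))$, so each dilated-translated sequence $\tilde u_k^{(n)}:=\gamma^{-\frac{N-2s}{2}j_k^{(n)}}u_k(\gamma^{-j_k^{(n)}}\cdot+y_k^{(n)})$ remains maximizing. Combining the $L^{2_s^\ast}$ remainder in \eqref{seis.quatro}, the asymptotic $L^{2_s^\ast}$-orthogonality of the translated profiles forced by \eqref{seis.dois}, and the critical bound on $F$, a nonlocal Brezis--Lieb type splitting yields $\Phi(u_k)\to\sum_{n\in\mathbb{N}_\ast}\Phi(w^{(n)})$, and hence
\[
\mathcal{S}_l=\sum_{n\in\mathbb{N}_\ast}\Phi(w^{(n)}),\qquad \sum_{n\in\mathbb{N}_\ast}l_n\leq l,\quad l_n:=\|w^{(n)}\|^2.
\]

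The main obstacle is forcing exactly one profile to be non-trivial with full mass. From the scaling identity $\mathcal{S}_{\gamma^{(N-2s)j}l}=\gamma^{Nj}\mathcal{S}_l$ ($j\in\mathbb{Z}$), which is an immediate consequence of \eqref{selfsimilar} applied to a near-maximizer, the function $l\mapsto \mathcal{S}_l/l^{N/(N-2s)}$ is continuous, positive and log-periodic. For each $n$ with $l_n>0$ I choose $m_n\in\mathbb{Z}$ so that $\gamma^{(N-2s)m_n}l_n$ is as close as possible to $l$; the discretely rescaled profile $\gamma^{\frac{N-2s}{2}m_n}w^{(n)}$ has $\Phi$-value $\gamma^{Nm_n}\Phi(w^{(n)})$ by \eqref{selfsimilar} and norm $\gamma^{(N-2s)m_n}l_n$, and a small continuous correction, absorbed via continuity of $\Phi$ and the estimate $|F(\tau t)|\leq C\tau^{2_s^\ast}|t|^{2_s^\ast}$ as $\tau\to 1$, brings the norm exactly to $l$, producing
\[
\Phi(w^{(n)})\leq (l_n/l)^{N/(N-2s)}\,\mathcal{S}_l.
\]
Summing and invoking the superadditivity of $t\mapsto t^{N/(N-2s)}$ on $[0,1]$ (which holds since $N/(N-2s)>1$) together with $\sum_n l_n/l\leq 1$, one obtains
\[
\mathcal{S}_l=\sum_n\Phi(w^{(n)})\leq\mathcal{S}_l\sum_n(l_n/l)^{N/(N-2s)}\leq \mathcal{S}_l,
\]
with equality only when a single $l_{n_0}$ equals $l$ and all other profiles vanish. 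Consequently $\tilde u_k^{(n_0)}\rightharpoonup w^{(n_0)}$ with matching norms, so the convergence is strong in $\mathcal{D}^{s,2}(\mathbb{R}^N)$, producing the claimed $(j_k,y_k)=(j_k^{(n_0)},y_k^{(n_0)})$.

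For the second statement I observe that the limits $f_\pm$ from \eqref{f_5} themselves inherit the self-similarity: shifting $j\to j+m$ inside their defining limit yields $f_\pm(\gamma^{\frac{N-2s}{2}m}t)=\gamma^{\frac{N+2s}{2}m}f_\pm(t)$ for every $m\in\mathbb{Z}$, so $F_\pm$ satisfies \eqref{selfsimilar}. Continuity and the critical bound for $f_\pm$ descend from \eqref{f_1} and \eqref{f_2} via the uniform convergence in \eqref{f_5}, and \eqref{posi_algum_auto} for $F_\pm$ is postulated, so the first part applies verbatim to $\mathcal{S}_{l,+}$ and $\mathcal{S}_{l,-}$. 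The assumption $\mathcal{S}_1>\max\{\mathcal{S}_{1,+},\mathcal{S}_{1,-}\}$ plays no role in this attainment itself and is kept for later comparisons of minimax levels.
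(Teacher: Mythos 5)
Your proof of the first statement follows the same skeleton as the paper's argument (profile decomposition, a Brezis--Lieb type splitting, norm constraint plus a convexity inequality to force a single nonzero profile), but the key normalization step is needlessly circuitous and not quite correctly justified. The paper simply sets $v^{(n)}(x)=w^{(n)}(\tau_n x)$ with $\tau_n=\|w^{(n)}\|^{2/(N-2s)}$; a pure change of variables, valid with no self-similarity, gives $\|v^{(n)}\|^2=1$ and $\Phi(w^{(n)})=\tau_n^N\Phi(v^{(n)})\leq\tau_n^N\mathcal{S}_1$, which is exactly the inequality you want. There is no need for a discrete dilation plus a ``small continuous correction,'' and the estimate $|F(\tau t)|\leq C\tau^{2_s^\ast}|t|^{2_s^\ast}$ that you invoke does not actually control $\Phi(\tau u)-\Phi(u)$ (it is an upper bound on each term, not a difference bound). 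A continuous spatial dilation eliminates this problem entirely and also upgrades your ``log-periodic'' observation to the exact identity \eqref{truque}. You should also make explicit that the splitting $\Phi(u_k)\to\sum_n\Phi(w^{(n)})$ is not a consequence of critical growth plus asymptotic orthogonality alone; the paper's Corollary~\ref{rightone} and Proposition~\ref{lemma2} rely on the asymptotic additivity \eqref{f_extra}, which in the self-similar case comes from Lemma~\ref{basic_prop_ss}(iii) (using local Lipschitz continuity of $F$, guaranteed since $f$ is continuous).

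The second part of your argument does not match what the paper's proof establishes, and this is a genuine gap. You read the ``moreover'' clause as asserting attainment of $\mathcal{S}_{l,\pm}$ and conclude, correctly but trivially, that this follows from the first part because $F_\pm$ are self-similar (Remark~\ref{they_are_ss}); you then declare the hypothesis $\mathcal{S}_1>\max\{\mathcal{S}_{1,+},\mathcal{S}_{1,-}\}$ irrelevant. But the paper's proof of the second half takes a maximizing sequence for $\mathcal{S}_1$ (the functional built from $F$, which in that regime satisfies only \eqref{f_1}, \eqref{f_2}, \eqref{f_5} and is \emph{not} assumed self-similar), applies Proposition~\ref{lemma2} to obtain
\[
\mathcal{S}_1=\sum_{n\in\mathbb{N}_0}\Phi(w^{(n)})+\sum_{n\in\mathbb{N}_-}\Phi_-(w^{(n)})+\sum_{n\in\mathbb{N}_+}\Phi_+(w^{(n)}),
\]
and then uses $\mathcal{S}_{1,\pm}/\mathcal{S}_1<1$ to push the rescaled inequality
\[
1\leq\sum_{n\in\mathbb{N}_0}\tau_n^N+\frac{\mathcal{S}_{1,-}}{\mathcal{S}_1}\sum_{n\in\mathbb{N}_-}\tau_n^N+\frac{\mathcal{S}_{1,+}}{\mathcal{S}_1}\sum_{n\in\mathbb{N}_+}\tau_n^N,
\]
so that only a pure-translation profile ($n_0\in\mathbb{N}_0$) can survive. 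Here the strict inequality $\mathcal{S}_1>\max\{\mathcal{S}_{1,\pm}\}$ is exactly what rules out mass escaping to $j_k\to\pm\infty$, and is the entire content of the second statement; this is the form that Proposition~\ref{minimaxprop} and Theorem~\ref{principal} later rely on. Your proposal omits this case entirely, so the reduction from general $F$ to self-similar $F_\pm$ --- the substantive work of the ``moreover'' clause --- is missing.
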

Our next result proves that maximizers of \eqref{max} are indeed non-trivial solutions of Eq.~\eqref{P}. Moreover, the
mountain pass level \eqref{minimax} is attained. The main tool to achieve these facts is a Pohozaev type identity proved in Section \ref{pohozaevs}, which holds under the condition $0<s<1$ and taking into account the smoothness of the nonlinearity.
\begin{theorem}\label{segunda}
Assume that $f(t)\in C^1 ( \mathbb{R} )$ satisfies \eqref{f_1}, \eqref{f_2} and \eqref{posi_algum_auto}.
\begin{enumerate}[label=(\roman*)]
\item\label{item1a} If $v$ is a nonzero critical point of $I,$ then $c(I) \leq I(v);$
\item \label{item2a} If $w$ is a maximizer of $\mathcal{S} _{l_0}$ for 
 $l_0:=((2^{\ast } _s/2) \mathcal{S} _1) ^{-\frac{N-2s}{2s}},$
then $w$ is a critical point of $I.$ Moreover $$0<\max _{t \geq 0} I(w(\cdot /t))=I(w)=c(I).$$
\end{enumerate}
\end{theorem}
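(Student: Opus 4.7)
The plan is to leverage the scaling structure of $\mathcal{D}^{s,2}(\R^N)$ together with the Pohozaev identity of Section \ref{pohozaevs} and a Lagrange multiplier argument. For \emph{part (i)}, given a nonzero critical point $v$ of $I$, I consider the one-parameter family $v_t(x):=v(x/t)$ for $t>0$ with $v_0:=0$. Since $\|v_t\|^2 = t^{N-2s}\|v\|^2$ and, by autonomy of $F$, $\int_{\R^N} F(v_t)\dx = t^N \int_{\R^N} F(v)\dx$, the scalar function
\[
\phi(t):=I(v_t) = \frac{t^{N-2s}}{2}\|v\|^2 - t^N\int_{\R^N} F(v)\dx
\]
satisfies $\phi(0)=0$. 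Because $v$ is a nonzero weak solution of $(-\Delta)^s v = f(v)$, the Pohozaev identity gives $\frac{N-2s}{2}\|v\|^2 = N\int_{\R^N} F(v)\dx$; in particular $\int_{\R^N} F(v)\dx>0$, $\phi(t)\to -\infty$ as $t\to\infty$, and a direct calculation shows $\phi'(t)=0$ iff $t=1$, which is the global maximum. Hence $t\mapsto v_t$ lies in $\Gamma_I$ (continuity at $t=0$ comes from $\|v_t\|^2\to 0$) and $c(I)\leq \phi(1)=I(v)$.

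For \emph{part (ii)}, I first show that $w$ is a critical point of $I$. As a maximizer of $\Phi(u):=\int_{\R^N} F(u)\dx$ on $\{\|u\|^2=l_0\}$, the Lagrange multiplier theorem produces some $\mu\in\R$ with $f(w)=2\mu(-\Delta)^s w$; note $\mu\neq 0$ since $\mathcal{S}_{l_0}>0$ by \eqref{posi_algum_auto} and the scaling. Applying the Pohozaev identity to $(-\Delta)^s w=(2\mu)^{-1}f(w)$ yields $\frac{N-2s}{2}l_0 = \frac{N}{2\mu}\mathcal{S}_{l_0}$. Combining this with the scaling identity $\mathcal{S}_l = l^{N/(N-2s)}\mathcal{S}_1$ (a direct consequence of the dilation invariance of $\mathcal{D}^{s,2}(\R^N)$ together with the autonomy of $F$) and the precise choice of $l_0$, the factor $(2\mu)^{-1}$ collapses to $1$, so $w$ satisfies $(-\Delta)^s w = f(w)$.

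With this in hand, part (i) applied to $v=w$ gives $c(I)\leq \max_{t\geq 0} I(w(\cdot/t)) = I(w)$. For the reverse inequality I introduce $g(l):=\frac{l}{2}-\mathcal{S}_l = \frac{l}{2}-\mathcal{S}_1 l^{N/(N-2s)}$, which satisfies $I(u)\geq g(\|u\|^2)$ for every $u\in\mathcal{D}^{s,2}(\R^N)$. An elementary one-variable analysis shows $g$ has a unique maximum at $l_0$ with $g(l_0)=\frac{s\,l_0}{N}=I(w)$. For any $\gamma\in\Gamma_I$, the condition $I(\gamma(t))\to -\infty$ together with the embedding $\mathcal{D}^{s,2}\hookrightarrow L^{2^{\ast}_s}$ forces $\|\gamma(t)\|\to\infty$, so by continuity $\|\gamma(t_*)\|^2 = l_0$ for some $t_*$, whence $\max_t I(\gamma(t))\geq g(l_0)=I(w)$. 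Therefore $c(I)=I(w)$, and $l_0>0$ yields $I(w)>0$.

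The chief obstacle is justifying the Pohozaev identity for the Lagrange multiplier solution $w$: as a constrained maximizer, $w$ is only a priori a weak solution of $(-\Delta)^s w = (2\mu)^{-1}f(w)$, and pushing the Pohozaev identity through in this nonlocal setting requires the $s$-harmonic extension framework of Section \ref{pohozaevs} with the attendant regularity considerations. A secondary delicate point is to track the constants carefully enough that the specific value of $l_0$ indeed forces $(2\mu)^{-1}=1$ via the scaling relation for $\mathcal{S}_l$.
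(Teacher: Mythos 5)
Your part (i) and the first half of part (ii) (Lagrange multiplier, Pohozaev identity via the $s$-harmonic extension, and the scaling relation $\mathcal{S}_l = l^{N/(N-2s)}\mathcal{S}_1$ forcing the multiplier to collapse) track the paper's proof essentially verbatim. The genuine departure is in establishing $I(w)\leq c(I)$: the paper first narrows the admissible paths to scaling curves $\zeta_u(t)=u(\cdot/t)$ with $\Phi(u)>0$ and $\|u\|\geq 1$ (Lemma \ref{novolemma} and Remark \ref{remark_minimax}) and then computes the resulting sup--inf in closed form, whereas you argue directly from $I(u)\geq g(\|u\|^2)$ with $g(l)=\tfrac{l}{2}-\mathcal{S}_l$: any admissible path starts at $0$ and has $\|\gamma(t)\|\to\infty$ (forced by $I(\gamma(t))\to-\infty$ and the subcritical-in-norm lower bound on $I$), so the intermediate value theorem gives $\|\gamma(t_*)\|^2=l_0$ for some $t_*$, whence $\max_t I(\gamma(t))\geq g(l_0)=I(w)$. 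This route is more elementary, dispenses with the path-reduction lemma entirely, and in fact only uses $g(l_0)=I(w)$ rather than the stronger fact that $g$ is maximized at $l_0$. One arithmetic caution you should make explicit rather than wave at: your claims that the Lagrange multiplier satisfies $(2\mu)^{-1}=1$ and that $g$ peaks at $l_0$ both require $l_0=(2_s^\ast\mathcal{S}_1)^{-\frac{N-2s}{2s}}$; with the value $l_0=((2_s^\ast/2)\mathcal{S}_1)^{-\frac{N-2s}{2s}}$ as printed in the statement, the same computation yields $2\mu=2$ (the paper's $\lambda=1$), so $w$ solves $2(-\Delta)^s w=f(w)$ rather than Eq.~\eqref{P}, and $g$ peaks at $(2_s^\ast\mathcal{S}_1)^{-\frac{N-2s}{2s}}\neq l_0$. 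Your bookkeeping is the internally consistent one (the printed $l_0$ appears to carry an extraneous factor of $1/2$ inside the parentheses), but the discrepancy with the stated $l_0$ should be noted, not silently absorbed.
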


From Theorem~\ref{segunda}, we conclude that to obtain weak solutions for the autonomous case, only assumptions~\eqref{f_1}, \eqref{posi_algum_auto} and \eqref{selfsimilar} are needed. Moreover, we are able to prove that the minimax level is attained without the Ambrosetti-Rabinowitz condition \eqref{AR}. 

Another way to approach Eq.~\eqref{P} is by the means of constrained minimization. In fact, due to Theorem~\ref{teo_tinta_frac} we can argue as \cite{tintabook}, and thanks to the Pohozaev identity, reasoning as in \cite{berlions}, we can derive existence of a ground state solution (or least energy) for Eq.~$\eqref{P},$ that is, a solution $u$ of \eqref{P} such that $I(u) \leq I(v),$ for any other solution $v.$
\begin{theorem}\label{minimizacao}
Suppose that $f(t) \in C^1 (\mathbb{R}^N)$ satisfies \eqref{f_1}, \eqref{posi_algum_auto} and \eqref{selfsimilar}. Let $$\mathcal{G}=\left\{u \in \mathcal{D}^{s,2} (\mathbb{R}^N ):\Phi(u)=1 \right\}$$ and consider
\begin{equation}\label{min}
\mathcal{I} = \inf _{ u \in \mathcal{G}} \int _{\mathbb{R}^N} \big|(-\Delta)^{s/2} u (x)\big| ^2 \dx.
\end{equation}
Then, for any minimizing sequence $(u_k)$ of \eqref{min} there exists $(j_k) \subset \mathbb{Z}$ and $(y_k) \subset \mathbb{Z}^N$ such that $( \gamma ^{-\frac{N-2s}{2}j_k } u_k (\gamma ^{- j_k } \cdot + y_k ) )$ contains a convergent subsequence in $\mathcal{D}^{s,2} (\mathbb{R}^N)$. In particular, there exists a minimizer $w$ for \eqref{min}. Furthermore, $u(x)=w(x / \beta )$ is a ground state solution for Eq.~\eqref{P} for some $\beta>0.$ 
\end{theorem}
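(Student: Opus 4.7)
The plan is to follow the classical Berestycki--Lions constrained minimization scheme, adapted to the fractional self-similar setting via Theorem~\ref{teo_tinta_frac}. Given a minimizing sequence $(u_k)\subset\mathcal{G}$ of \eqref{min}, I first apply Theorem~\ref{teo_tinta_frac} to extract profiles $w^{(n)}$, translations $y_k^{(n)}\in\mathbb{Z}^N$, and dilations $j_k^{(n)}\in\mathbb{Z}$. A direct Fourier-side computation shows that the operators $u\mapsto\gamma^{\frac{N-2s}{2}j}u(\gamma^j(\cdot-y))$ preserve $\|\cdot\|^2$, while the self-similarity identity \eqref{selfsimilar} shows they also preserve $\Phi$. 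Combining this invariance with \eqref{seis.um}--\eqref{seis.quatro} and an asymptotic additivity step for $\Phi$ yields
\[
\sum_{n}\Phi(w^{(n)}) \;=\; \lim_{k\to\infty}\Phi(u_k) \;=\; 1 \qquad\text{and}\qquad \sum_{n}\|w^{(n)}\|^2 \;\leq\; \mathcal{I}.
\]

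For the continuous rescaling $w\mapsto w(\cdot/t)$ one has $\|w(\cdot/t)\|^2 = t^{N-2s}\|w\|^2$ and $\Phi(w(\cdot/t)) = t^N\Phi(w)$; choosing $t$ so that the rescaled profile lies in $\mathcal{G}$ yields the profile bound $\|w^{(n)}\|^2 \geq \mathcal{I}\,\Phi(w^{(n)})^{(N-2s)/N}$ whenever $\Phi(w^{(n)})>0$. Summing and invoking the strict subadditivity of $x\mapsto x^{(N-2s)/N}$ on $[0,1]$ forces exactly one profile, which I denote $w$, to be nontrivial, with $\Phi(w)=1$ and $\|w\|^2=\mathcal{I}$. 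Combining the corresponding weak convergence with the norm convergence $\|u_k\|^2\to\|w\|^2$ upgrades the convergence to strong convergence of the suitably rescaled and translated subsequence to $w$ in $\mathcal{D}^{s,2}(\mathbb{R}^N)$.

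The Lagrange multiplier rule applied to this $C^1$ constrained problem produces $\lambda\in\mathbb{R}$ with $(-\Delta)^s w = (\lambda/2) f(w)$ weakly. The Pohozaev type identity of Sect.~\ref{pohozaevs} applied to $w$ gives $\tfrac{N-2s}{2}\|w\|^2 = N(\lambda/2)$, whence $\lambda = \tfrac{N-2s}{N}\mathcal{I}>0$. Setting $\beta := (\lambda/2)^{1/(2s)}$ and $u(x) := w(x/\beta)$, the scaling identity $(-\Delta)^s[w(\cdot/\beta)](x) = \beta^{-2s}(-\Delta)^s w(x/\beta)$ converts the equation for $w$ into $(-\Delta)^s u = f(u)$. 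For the ground state property, the Pohozaev identity applied to any nontrivial solution $v$ of \eqref{P} gives $\Phi(v) = \tfrac{N-2s}{2N}\|v\|^2$ and $I(v) = \tfrac{s}{N}\|v\|^2$; combined with the profile bound $\|v\|^2 \geq \mathcal{I}\,\Phi(v)^{(N-2s)/N}$, this yields a universal lower bound on $\|v\|^2$ that the constructed $u$ attains with equality by a direct computation, so $u$ has the least energy among solutions.

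The main obstacle is the asymptotic additivity $\lim\Phi(u_k) = \sum_n\Phi(w^{(n)})$, since hypothesis \eqref{f_extra} is not invoked in Theorem~\ref{minimizacao}: it must be derived from \eqref{selfsimilar} alone, which enforces the growth bound $|F(t)|\lesssim|t|^{2^\ast_s}$ and hence suitable cross-term estimates, together with the $L^{2^\ast_s}$-control of the remainder from \eqref{seis.quatro} and the $\mathcal{D}^{s,2}$-orthogonality \eqref{seis.dois} of the profiles. A secondary technical point is to verify that the Pohozaev identity of Sect.~\ref{pohozaevs} is indeed applicable to the constrained minimizer $w$ at the regularity available from its Lagrange multiplier equation, which is handled through the $s$-harmonic extension of Caffarelli--Silvestre.
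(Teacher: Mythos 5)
Your proposal follows the paper's own argument in all essentials: profile decomposition via Theorem~\ref{teo_tinta_frac}, additivity of $\Phi$ across the profiles (the paper derives this through Lemma~\ref{basic_prop_ss}(iii) and Corollaries~\ref{rightone}, \ref{brezis-lieb}, which is precisely the self-similarity-plus-growth route you sketch), a strict-subadditivity argument with the exponent $(N-2s)/N$ to collapse the decomposition to a single profile, a Lagrange multiplier combined with the Pohozaev identity of Section~\ref{pohozaevs} to convert the constrained minimizer into a solution of \eqref{P} after rescaling by $\beta=(\mathcal{I}/2^\ast_s)^{1/(2s)}$, and a final Pohozaev comparison to obtain the ground-state property. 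The only presentational difference is that the paper organizes the subadditivity step via the Brezis--Lieb split $d_k^\ast u_k = w + v_k$ and the explicit inequality $1\geq\delta^{1-2s/N}+|1-\delta|^{1-2s/N}$ rather than summing $\|w^{(n)}\|^2\geq\mathcal{I}\,\Phi(w^{(n)})^{(N-2s)/N}$ over all profiles, but the underlying mechanism is identical.
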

In the following result we prove that Palais-Smale condition at the mountain pass level holds for the general non-autonomous case.
\begin{theorem}\label{principal}
	If $f(x,t)$ satisfies \eqref{f_1}--\eqref{f_pohozaev} and \eqref{desigualdade_geral}, then Eq. \eqref{P} has a nontrival weak solution $u$ in $\mathcal{D}^{s,2}(\mathbb{R}^N)$ at the mountain pass level, that is, $I(u) = c(I).$ Moreover, if we assume additionally that \eqref{suficient} holds true intead of \eqref{desigualdade_geral}, then any sequence $(u_k)$ in $\mathcal{D}^{s,2} (\mathbb{R}^N)$ such that $I (u_k) \rightarrow c(I)$ and $I' (u_k) \rightarrow 0$ has a convergent subsequence.
\end{theorem}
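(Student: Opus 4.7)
The plan is to combine the mountain pass theorem with the profile decomposition of Theorem \ref{teo_tinta_frac} applied to a Palais--Smale sequence at level $c(I)$, and then use the level comparison encoded in \eqref{suficient} (or \eqref{desigualdade_geral}) to rule out drifted profiles. First I would verify the mountain pass geometry: the growth bound \eqref{f_2} together with the continuous embedding $\mathcal{D}^{s,2}(\mathbb{R}^N) \hookrightarrow L^{2^{\ast}_s}(\mathbb{R}^N)$ gives $I(u) \geq \tfrac{1}{2}\|u\|^2 - C\|u\|^{2^{\ast}_s}$, so zero is a strict local minimum, while \eqref{AR} combined with \eqref{posi_algum} produces a ray along which $I \to -\infty$. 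Hence $c(I) > 0$ and a Palais--Smale sequence $(u_k)$ at $c(I)$ exists; the Ambrosetti--Rabinowitz identity
\[
I(u_k) - \tfrac{1}{\mu} I'(u_k)\cdot u_k \geq \left(\tfrac{1}{2} - \tfrac{1}{\mu}\right)\|u_k\|^2
\]
gives boundedness of $(u_k)$ in $\mathcal{D}^{s,2}(\mathbb{R}^N)$.

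Next I apply Theorem \ref{teo_tinta_frac} and identify each profile $w^{(n)}$ as a (possibly trivial) critical point of a limit functional determined by the asymptotics of $(y_k^{(n)}, j_k^{(n)})$: for $n = 1$ and for $n \in \mathbb{N}_0$ with bounded $(y_k^{(n)})$, a critical point of $I$ itself; for $n \in \mathbb{N}_0$ with $|y_k^{(n)}| \to \infty$, a critical point of $I_0$; for $n \in \mathbb{N}_+$ and $n \in \mathbb{N}_-$, critical points of $I_+$ and $I_-$ respectively. This step uses the uniform/locally uniform limits in \eqref{f_5} to replace $f$ by $f_\kappa$ after translating and rescaling test functions, and \eqref{f_pohozaev} to guarantee that the limit functionals are $C^1$. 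Then the key energy decomposition
\[
c(I) = \lim_k I(u_k) = I(w^{(1)}) + \sum_{n \geq 2} I_{\kappa(n)}(w^{(n)})
\]
follows from \eqref{seis.tres} and the orthogonality \eqref{seis.dois} for the kinetic part, and from the asymptotic additivity \eqref{f_extra} combined with \eqref{f_2} and the uniform $L^{2^{\ast}_s}$-summability of the remainder in \eqref{seis.quatro} for the nonlinear part.

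Under \eqref{suficient} each $n \geq 2$ would force $I_{\kappa(n)}(w^{(n)}) \geq c(I_{\kappa(n)}) > c(I)$, while at any critical point of $I$ one has $I(w^{(1)}) \geq (\tfrac{1}{2} - \tfrac{1}{\mu}) \|w^{(1)}\|^2 \geq 0$; the decomposition then forces $\mathbb{N}_* = \{1\}$ and $w^{(1)} \neq 0$ with $I(w^{(1)}) = c(I)$. Strong $L^{2^{\ast}_s}$-convergence $u_k \to w^{(1)}$ comes from \eqref{seis.quatro}, and combined with $\|u_k\|^2 \to \|w^{(1)}\|^2$ (read off from $I(u_k) \to c(I)$ and $I'(u_k)\cdot u_k \to 0$) this yields strong convergence in $\mathcal{D}^{s,2}(\mathbb{R}^N)$, proving the Palais--Smale assertion. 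Under the weaker hypothesis \eqref{desigualdade_geral}, the inequality $F \geq F_\kappa$ only gives $c(I) \leq c(I_\kappa)$, so the borderline case $c(I) = c(I_{\kappa_0})$ with $w^{(1)} = 0$ and a single drifted profile is a priori possible; in that situation I would invoke the ground state for the autonomous limit problem produced by Theorem \ref{minimizacao}, and use the pointwise comparison of $F$ and $F_{\kappa_0}$ along minimax paths for $I$ and $I_{\kappa_0}$ to extract a non-trivial critical point of $I$ at level $c(I)$.

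The main obstacle is the precise energy decomposition in the non-autonomous setting: identifying the limit functional for each profile requires careful case analysis based on whether $j_k^{(n)}$ tends to $0$, $+\infty$, or $-\infty$ and whether $y_k^{(n)}$ remains bounded or drifts to infinity, while the nonlinear accounting requires \eqref{f_extra} to control cross terms between profiles, \eqref{f_2} to control the tail through \eqref{seis.quatro}, and the uniformity of the limits in \eqref{f_5} to match each profile with the correct limit nonlinearity without loss or double-counting of mass.
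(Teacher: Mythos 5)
Your treatment of the \eqref{suficient} case follows the paper's argument closely: mountain pass geometry from \eqref{f_2}, \eqref{AR}, \eqref{posi_algum}; boundedness of the PS sequence from \eqref{AR}; Theorem~\ref{teo_tinta_frac} to decompose; identification of each $w^{(n)}$ as a critical point of $I$, $I_0$, $I_{+}$ or $I_{-}$ via \eqref{f_5} and \eqref{f_pohozaev}; positivity $I_\kappa(w^{(n)})\geq 0$ from \eqref{AR}; and a level comparison with $c(I_\kappa)$ via Theorem~\ref{segunda}\ref{item1a} to rule out $n\geq 2$. One small discrepancy there: you record the energy decomposition as an \emph{equality}, but \eqref{seis.tres} only delivers $\sum_n\|w^{(n)}\|^2 \leq \limsup_k \|u_k\|^2$, so the paper only gets the one-sided estimate $c(I)\geq I(w^{(1)})+\sum_{n\geq 2} I_{\kappa(n)}(w^{(n)})$; the contradiction still works with the inequality, so this is cosmetic rather than fatal, but the way you state it is not justified.

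The genuine gap is in the case where \eqref{desigualdade_geral} holds instead of \eqref{suficient}, which is in fact the primary existence claim of the theorem. In the borderline situation where a single drifted profile $w^{(n_0)}$ survives with $c(I)=c(I_{\kappa_0})$, the paper proceeds by showing (via $F_\kappa \leq F$ and the Pohozaev/path comparison) that $\zeta_{w^{(n_0)}}\in\Gamma_I$ and that $c(I)=\max_{t\geq 0} I\bigl(\zeta_{w^{(n_0)}}(t)\bigr)$ is \emph{attained} along this path; it then invokes \cite[Theorem~2.3]{lins} (quoted here as Theorem~\ref{p_local_MP}), which asserts precisely that if the mountain pass level is realized as the maximum over a single admissible path, then $I$ has a critical point on that path at that level. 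Your plan instead speaks of invoking the ground state of the autonomous limit problem and ``pointwise comparison along minimax paths,'' but this only produces a critical point of $I_{\kappa_0}$ and a comparison of the two minimax values; it does not explain how a critical point of $I$ itself is obtained, since $I'(w^{(n_0)})$ has no reason to vanish. Without a statement such as \cite[Theorem~2.3]{lins}, the step ``extract a non-trivial critical point of $I$ at level $c(I)$'' in the equality case is left unjustified, and this is exactly the point where the autonomous theory alone cannot close the argument.
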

\subsubsection{Remark on the hypothesis}
\begin{remark} Next we give several helpful comments concerning our assumptions.
\begin{enumerate}[label=(\roman*)] \item On assumption  \eqref{f_1},   we recall that $
   f: \mathbb{R}^N \times \mathbb{R} \rightarrow \mathbb{R} $
satisfies  the Carath\'{e}odory conditions, if for each fixed 
 $ t \in \mathbb{R}, \; \;  f(\cdot, t) $  is measurable, and  
 for a.e.  $ x \in \mathbb{R}^N, \; \; f(x,\cdot) $   is continous in $ \mathbb{R}.$
\item Condition \eqref{f_2} includes in particular nonlinearites with critical growth.
\item Assumption \eqref{AR} is the well-known Ambrosetti-Rabinowitz condition (see \cite{ambrorab,Rabi1992}).
\item In order to prove that the functional associated with Eq.~\eqref{P} has the mountain pass geometry we consider \eqref{posi_algum}. Also, since we also deal with constrained minimization, an autonomous version of \eqref{posi_algum_auto} is needed (see \cite{berlions}). 
\item The asymptotic additivity \eqref{f_extra} ensure the convergence of the functional $\Phi$ under the weak profile decomposition for bounded sequences in $\mathcal{D}^{s,2}(\mathbb{R}^N)$ described in Theorem \ref{teo_tinta_frac}.
\item The smoothness condition $f(t)\in C^1(\mathbb{R})$ is the natural hypothesis used in the literature to prove that weak solutions of Eq. \eqref{P} satisfies a Pohozaev type identity.
\item Once the limits in \eqref{f_5} exist, to obtain compactness of Palais-Smale sequences at the minimax levels we need to require the additional conditions over the minimax levels $c_0, c_+, c_{-}$ given in assumption \eqref{suficient}. In fact, we do not believe that it is possible, in general, to achieve the compactness described in Theorem \ref{principal} without these conditions.  We mention that this kind of approach was introduced by P.-L. Lions in \cite{lionscompcase1,lionscompcase2,lionslimitcase1,lionslimitcase2}.
\item Observe also that the approach to obtain concentration-compactness for the autonomous case $f(x,t)=f(t)$ needs to be different since in this case, $f(t)$ does not satisfies \eqref{suficient}.
\item We also consider the case when \eqref{suficient} does not hold. Precisely, when it is allowed $c(I) = c(I_\kappa),$ for some $\kappa=0,+,-.$ In this case, the concentration-compactness argument at the mountain pass level cannot be used. We apply \cite[Theorem~2.3]{lins} to overcome this difficulty and prove existence of solution at the mountain pass level.
\item The limits $F _{\pm}$ fulfill \eqref{selfsimilar}. Thus, functions $f(x,t)$ that satisfies \eqref{f_5} could be seen as being asymptotically self-similar at $\pm \infty$ (see Remark \ref{they_are_ss}).
\item In Lemma~\ref{path} we proved that $\Gamma_{\kappa} \neq \emptyset$ is equivalent to: $\exists \, t_\kappa $ such that
$ F_\kappa (t_\kappa ) >0 $. Consequently \eqref{posi_algum_auto} is the most general assumption to ensure that $c(I)$ given in \eqref{minimax} is well defined (possible valuing $\pm \infty$).
\item One can consider the closed subspace of $\mathcal{D}^{s,2}(\mathbb{R}^N)$ consisting of radial functions, that is,
\begin{equation*}
\mathcal{D}^{s,2}_\rad (\mathbb{R}^N) = \left\lbrace u \in \mathcal{D}^{s,2}(\mathbb{R}^N) : u(x) = u(y),\text{ provided that }|x|=|y|\right\rbrace ,
\end{equation*}
to obtain more compactness. Precisely, we have that $w^{(n)}$ belongs to $\mathcal{D}^{s,2}_\rad (\mathbb{R}^N)$ with $w^{(n)} = 0,$ for all $n \in \mathbb{N}_0.$ The proof of this fact follows the same arguments as in \cite[Proposition 5.1]{tintabook}. Consequently our main results hold, replacing $\mathcal{D}^{s,2}(\mathbb{R}^N)$ by $\mathcal{D}^{s,2}_\rad (\mathbb{R}^N),$ and assuming that $f(x,t)=f(|x|,t)$ is radial in $ x$ instead of the existence of the asymptote $f_0(t).$
\end{enumerate}
\end{remark}
\begin{remark}\label{remark_positivo}
We have that $\mathcal{G} \neq \emptyset$ and $\mathcal{S} _l>0,$ provided \eqref{f_1}, \eqref{f_2} and \eqref{posi_algum_auto} holds. In fact, this follow as in \cite[Lemma 2.6 and Remark 2.8]{pala_vald_sch}. Indeed, let $v _R \in C^\infty _0 (\mathbb{R}),\ R>0,$ such that $0\leq v _R(t)\leq t_0$ and
\begin{align*}
v _R(t)=
\left\{\begin{array}{llc}
t_0,&\quad\text{if }|t|\leq R,\\
0,&\quad \text{if } |t|>R+1.
\end{array}
\right.
\end{align*}
For all $x \in \mathbb{R}^N,$ taking $\varphi_R(x) := v_R(|x|),$ we have $\varphi_R \in \mathcal{D}^{s,2} (\mathbb{R}^N ).$ Moreover, 
\begin{align*}
\int_{\mathbb{R}^N} F(\varphi_R ) \dx &= \int _{B_R (x_0)} F(t_0) \dx + \int_{B_{R+1}(x_0)\setminus B_R(x_0)} F(\varphi _R ) \dx\\
&\geq F(t_0)|B_R|-|B_{R+1}\setminus B_R|\left(\max_{t \in [0,t_0]} |F(t)|\right).
\end{align*}
Thus there exist two positive constants $C_1$ and $C_2$ such that
\begin{equation*}
\int_{\mathbb{R}^N} F(\varphi_R ) \dx\geq C_1 R^N - C_2 R^{N-1}>0,
\end{equation*}
provided that $R$ is taken large enough. Taking a suitable $\sigma >0$, we may conclude that $\Phi (\varphi_R (\cdot / \sigma ))=1.$ The case where $t_0<0$ is analogous.
\end{remark}
\begin{example}
Typical examples (see Section \ref{self-similar_functions} and the proof of Lemma \ref{basic_prop_ss}) of a function satisfying \eqref{f_1}--\eqref{suficient} are given by
\begin{enumerate}[label=(\roman*)]
\item$f(x,t) = b(x) |t| ^{2_s ^\ast-2}t,$ where $b(x) \in C (\mathbb{R}^N)$ with 
\begin{equation}\label{Cond_b}
b(x)>b(0) = \inf _{x \in \mathbb{R}^N} b(x) = \lim _{|x| \rightarrow \infty}b(x),
\end{equation}
and $b(0)>0.$
\item $f(x,t) = \exp\{ b(x)( \sin (\ln |t|) +2 )  \} (b(x) \cos (\ln |t|) + 2^{\ast} _s) |t| ^{2^{\ast} _s -2}t,$ with $f(x,0):=0;$
\end{enumerate}
where $b(x)\in C(\mathbb{R})$ satisfies \eqref{Cond_b}, $b(0) = 0$ and moreover
\[ 
\sup _{x \in \mathbb{R}^N} b(x)<2_ s ^\ast - \sigma,\quad \mbox{for some}\quad \sigma \in (2,2_s ^\ast).
\]
The primitive is given by $F(x,t)=\exp\{b(x)(\sin (\ln |t|)+2)\}|t|^{2_s ^\ast}.$
\end{example}
\begin{example}
The function $f(t)= (2_s ^\ast\cos(\ln|t|) -\sin(\ln|t|)) |t|^{2^\ast_s -2} t,$  $f(0) : = 0,$ satisfies the assumptions of Theorems \ref{primeira}--\ref{minimizacao}.
\end{example}
\section{Preliminaries}\label{Preliminaries}
\subsection{Fractional Sobolev spaces}
Let $0<s<N/2,$ by Placherel Theorem, we have
\begin{equation*}
[u]_s^2 = \int _{\mathbb{R}^N} |(-\Delta)^{s/2}u|^2 \dx,\quad\forall \, u \in C^\infty _0 (\mathbb{R}^N).
\end{equation*}
Thus, the space $\mathcal{D}^{s,2} (\mathbb{R}^N)$ is well defined with continuous embedding 
\begin{equation}\label{embds}
\mathcal{D}^{s,2}(\mathbb{R}^N) \hookrightarrow L ^{2^{\ast } _s} (\mathbb{R}^N) \quad \mbox{for} \quad 0 < s < N/2,
\end{equation}
in view of the well know inequality
\begin{equation*}
\int _{\mathbb{R}^N} |u| ^{2^\ast _s} \dx \leq \mathcal{K}_\ast \left(\int _{\mathbb{R}^N} |\xi|^{2s} |\mathscr{F}u|^2  \dxi \right)^{2_s^\ast /2},\quad \forall \, u \in C^\infty _0 (\mathbb{R}^N),
\end{equation*}
where
\begin{equation*}
\mathcal{K}_\ast =\left[  2^{-2s}\frac{\Gamma(\frac{N-2s}{2})}{\Gamma(\frac{N+2s}{2})}\left(\frac{\Gamma(N)}{\Gamma(N/2)}\right)^{2s/N} \right] ^{2_s ^\ast/2}
\end{equation*}
and $\mathscr{F}u$ is defined in \eqref{fourierdef}. Moreover, $\mathcal{D}^{s,2}(\mathbb{R}^N)$ as a separable Hilbert space when endowed with the inner product
\begin{equation*}
(u,v) = \int _{\mathbb{R}^N} (-\Delta )^{s/2} u (-\Delta )^{s/2 } v \dx, \quad  \forall \, u,v\in \mathcal{D}^{s,2} (\mathbb{R}^N),
\end{equation*}
as well the characterization 
\begin{equation*}
\mathcal{D}^{s,2}(\mathbb{R}^N) = \left\{ u \in L ^{2^{\ast } _s } (\mathbb{R}^N ) : \ (-\Delta )^{s/2} u \in  L^2 (\mathbb{R} ^N) \right\} =\left\{u \in L ^{2^{\ast } _s } (\mathbb{R}^N ) : \ | \cdot | ^s \mathscr{F} u \in L^2 (\mathbb{R} ^N) \right\}.
\end{equation*}
For $\Omega \subset \mathbb{R}^N$ open set and  $0<s<1,$ the inhomogeneous fractional Sobolev space is defined as
\begin{equation}\label{gravi}
H ^s (\Omega)=\left\lbrace u \in L^2 (\Omega) :  \int _{\Omega} \int _{\Omega} \frac{\left| u(x) - u(y) \right|^2}{|x-y|^{N + 2s}}dxdy <\infty \right\rbrace,
\end{equation}
with the norm
\begin{equation*}
\| u \| _{H ^s (\Omega )} ^2 := \int _{\Omega} u^2\dx + \int _{\Omega} \int _{\Omega} \frac{\left| u(x) - u(y) \right|^2}{|x-y|^{N + 2s}}\dxdy .
\end{equation*}
When $0<s<1,$ by \cite[Proposition 3.4]{hitchhiker},
\begin{equation*}
\|u\|^2 = \frac{C(N,s)}{2} \int _{\mathbb{R}^N} \int _{\mathbb{R}^N} \frac{ |u(x) - u(y) |^2}{|x-y|^{N + 2s}}\dxdy,\quad\forall \, u \in \mathcal{D}^{s,2} (\mathbb{R}^N),
\end{equation*}
for some positive constant $C(N,s).$  Thus, when $\Omega = \mathbb{R}^N,$ we have
\begin{equation}\label{manga}
H ^s (\mathbb{R} ^N) = \left\lbrace u \in L^2 (\mathbb{R}^N) : \ | \cdot | ^s \mathscr{F} u \in L^2 (\mathbb{R} ^N) \right\rbrace =\left\lbrace u \in L^2 (\mathbb{R}^N) : (-\Delta )^{s/2} u \in  L^2 (\mathbb{R} ^N) \right\rbrace.
\end{equation}
Turns out that definition of $H ^s (\mathbb{R} ^N)$  given in \eqref{manga} it is more appropriated for the general case $s\geq0,$ than definition \eqref{gravi}, because for $s \geq 1,$ the integral in \eqref{gravi} is finite if and only if $u$ is constant (see \cite[Proposition 2]{constant}). Moreover, we have the continuous embedding
\begin{equation}\label{Hembd}
H^s (\Omega) \hookrightarrow L^p(\Omega),\quad 2 \leq p \leq 2_s ^\ast,\quad\text{for}\quad 0 < s < N/2,
\end{equation}
and the following compact embedding (see \cite[Section 7]{hitchhiker}),
\begin{equation}\label{comp_loc}
H^s(\mathbb{R}^N)\hookrightarrow L _{\loca}^p (\mathbb{R}^N),\quad 1 \leq p < 2_s ^\ast,\quad\text{for}\quad 0 < s < 1.
\end{equation}
Consequently, we have that every bounded sequence in $\mathcal{D}^{s,2}(\mathbb{R}^N)$ has a subsequence that converges almost everywhere and weakly in $\mathcal{D}^{s,2}(\mathbb{R}^N),$ for $0<s<\min\{1,N/2\}$. Let $\mathscr{S}_0$ the subspace of $\mathscr{S}$ consisting in all function $u$ such that $\mathscr{F}u \in C^\infty _0 (\mathbb{R}^N\setminus \{0\} ).$ In this case, $(-\Delta) ^s u \in \mathscr{S}.$ We finish this section emphasizing that the Plancherel Theorem also gives the next identity, which it will be used several times throughout this paper
\begin{equation}\label{formula}
\int _{\mathbb{R}^N} (-\Delta )^{s/2} u (-\Delta )^{s/2 } v \dx = \int _{\mathbb{R}^N} (-\Delta )^{s} u  v \dx, \; \forall \, u,v \in \mathscr{S}_0.
\end{equation}
\subsection{Local regularity and Pohozaev Identity}\label{pohozaevs}
We are in the position to prove that weak solutions of autonomous form of Eq.~\eqref{P} are $C^1(\mathbb{R}^N)$ and satisfies the Pohozaev identity
\begin{equation}\label{pohozaev}
\int _{\mathbb{R}^N} \big|(-\Delta)^{s/2} u(x) \big|^2 \dx = \frac{2N}{N-2s} \int _{\mathbb{R}^N} F(u(x)) \dx,
\end{equation}
under suitable assumptions on $f(t)$ (see Proposition \ref{prop_pohozaev} for the precise statement). We refer to \cite{chang-wang}, where the identity was studied for solutions in $H^s (\mathbb{R}^N)$ and when $f(t)$ satisfy a fractional version of the H.~Berestycki and P.-L. Lions assumptions. The main idea for that, it is to use the so called Caffarelli-Silvestre extension (see \cite{caf_silv} for more details) which transform the autonomous non-local Eq.~\eqref{P} in a local one and use recent regularity results to develop the resultant expression in a such way to apply the argument of \cite[Section 2]{berlions}. Our approach is in some way different from the usual one. Although we continue using Caffarelli-Silvestre extension (also know as harmonic extension), by the results of \cite{moustapha} and \cite{frac_niremberg}, we can derive a local regularity for weak solutions in $\mathcal{D}^{s,2} (\mathbb{R}^N)$ in a more suitable way to get the desired identity by applying a truncation argument.

Next for the readers convenience we introduce the harmonic extension following \cite[Section 2]{frac_niremberg} and for that we begin  describing a class of weight Sobolev spaces suitable to work with this harmonic extension. First, observe that, for any $0<s<1$, the function $z=(x,y) \mapsto |y|^{1-2s}$ belongs to the Muckenhoupt class $\mathcal{A}_2$ of weights in $\mathbb{R}^{N+1},$ that is
\begin{equation*}
\left( \frac{1}{|B|} \int _{B} |y|^{1-2s} \dxdy \right) \left(\frac{1}{|B|}  \int _{B}|y|^{2s-1 }\dxdy\right) \leq C, \;  \forall \text{ ball } B\subset \mathbb{R}^{N+1}.
\end{equation*}
More details can be found in \cite{fabes}. Let $Q$ be a open set in $\mathbb{R}^{N+1},$ we consider $L^2 (Q, |y| ^{1-2s})$ as the Banach space of the Lebesgue measurable functions $v$ defined in $Q$ such that
\begin{equation*}
\|v\| _{L^2 (Q, |y| ^{1-2s})} = \left(\int _{Q }|y|^{1-2s} v^2 \dxdy \right)^{1/2}<\infty.
\end{equation*}
We also consider the space $H^1 (Q , |y| ^{1-2s} )$ of the functions $w$ in $L^2 (Q, |y| ^{1-2s})$ such that its weak derivatives $w_{z_i}$ exists and belongs to $L^2 (Q, |y| ^{1-2s})$ for $\ i=1,\ldots,N+1.$ It is easy to see that $H^1 (Q , |y| ^{1-2s} )$ is a Hilbert space with inner product
\begin{equation*}
( v_1,v_2 ) _{H^1 (Q, |y|^{1-2s} ) } = \int _{Q} |y|^{1-2s} \left( \left\langle \nabla v_1 , \nabla v_2\right \rangle +v_1 v_2  \right) \dxdy,
\end{equation*}
and the induced norm
\begin{equation*}
\|v\| _{H^1 (Q, |y|^{1-2s} ) } = \left( \int _{Q} |y|^{1-2s} \big|\nabla v (x,y) \big| ^2 + |y|^{1-2s} v ^2 (x,y) \dxdy\right) ^{1/2}.
\end{equation*}
We call attention to the fact that the space of smooth functions $C^\infty (Q)\cap H^1 (Q , |y| ^{1-2s} )$ is dense in the weight Sobolev space $H^1 (Q , |y| ^{1-2s} )$ (see \cite{turesson} for further details).

Regarding the space $H^1 (Q , y ^{1-2s} )$ with $Q=\Omega \times (0,R),$ where $\Omega \subset \mathbb{R}^N$ is a domain  with Lipschitz boundary, it is well know the existence of a well-defined trace operator $$t_r : H^1 (Q , y ^{1-2s} ) \rightarrow H^s (\Omega)$$ with
\begin{equation*}
\| t_r (v) \| _{H^s (\Omega )} \leq C \| v \| _{H^1 (Q, y^{1-2s} )}, \; \forall \,  v \in H^1 (Q, y^{1-2s} ),
\end{equation*}
where $C>0,$ depends only on $N,$ $s$ and $\Omega$ (see also \cite{nekvinda}). Moreover, by the continuous embedding $H^s (\Omega) \hookrightarrow L^{2 ^\ast _s} (\Omega),$ we have
\begin{equation}\label{tracinho}
\| t_r (v) \| _{L ^{2^\ast _s} (\Omega)} \leq C \| v \| _{H^1 (Q, y^{1-2s} )}, \, \forall v \in H^1 (Q, y^{1-2s} ).
\end{equation} 
Let
\begin{equation*}
P _s(x,y) = \beta(N,s)\frac{y^{2s}}{\left(|x|^2 + y^2 \right)^{\frac{N+2s}{2}} },
\end{equation*}
where $\beta(N,s)$ is such that $\int _{\mathbb{R^N}} P _s (x,1) \dx =1$ and $0<s<\min\{1,N/2\}.$ Considering the standard notation
 $$\mathbb{R}^{N+1} _+=\{(x,y)\in \mathbb{R}^{N+1}:y>0 \},$$
for $u \in \mathcal{D}^{s,2} (\mathbb{R}^N)$ let us set the $s-$harmonic extension of $u$,
\begin{equation*}
w(x,y)= E_s (u) (x,y) := \int _{\mathbb{R}^N} P _s (x- \xi,y) u (\xi) \dxi,\quad (x,y) \in \mathbb{R}_+ ^{N+1}.
\end{equation*}
Then, for any compact subset $K$ of $\overline{\mathbb{R}^{N+1} _+},$ we have $w \in L^2 (K, y^{1-2s})$, $\nabla w \in L ^2 (\mathbb{R}^{N+1} _+, y^{1-2s})$ and  $w \in C^{\infty} (\mathbb{R}^{N+1} _+).$ Moreover, $w$ satisfies
\begin{equation}\label{sol_div}
\left\{
\begin{aligned}
\dive (y^{1-2s} \nabla w ) & = 0,  & \text{in } & \mathbb{R}^{N+1} _+,  \\
-\lim _{y \rightarrow 0 ^+} y^{1-2s} w_y (x,y) & = \kappa _s (- \Delta) ^s u (x)   & \text{in } & \mathbb{R}^{N},\\
\| \nabla w \| ^2 _{ L ^2 (\mathbb{R}^{N+1} _+, y^{1-2s}) } &=\kappa _s \|u\|^2,
\end{aligned}
\right.
\end{equation}
where we understand \eqref{sol_div} in the distribution sense, where $\kappa _s = 2^{1-2s}  \Gamma (1-s) / \Gamma (s),$ and $\Gamma$ is the gamma function. Precisely,
\begin{equation*}
\int _{B_R ^+} y^{1-2s} \left\langle  \nabla w , \nabla \varphi \right\rangle \dxdy = \kappa _s \int_{B_R ^N} (-\Delta) ^{s/2} u (-\Delta) ^{s/2} (t_r \varphi) \dx, \; \forall  \varphi \in C ^\infty _0 (B ^+_R \cup B^N _R),
\end{equation*}
where for $R>0$,
\[
\left\{
\begin{aligned}
& B_R = \{ z=(x,y) \in \mathbb{R}^{N+1}  :|z|^2<R^2 \},\\
& B_R ^+ = B_R \cap \mathbb{R}^{N+1} _+\text{ and}&\\
& B ^N _R = \{ z=(x,y) \in \mathbb{R}^{N+1} _+  :|z|^2<R^2, \  y = 0 \}. &
\end{aligned}
\right.
\]
More generally, given $g(t) \in C (\mathbb{R} )$ such that
\begin{equation}\label{ge_grow}
|g(t)| \leq C |t|^{2^\ast _s-1},\quad\forall t \in \mathbb{R},
\end{equation}
we say that a function $v \in H^1 (B ^+ _R , y ^{1-2s} )$ is a weak solution of the problem
\begin{equation}\label{weak_final}
\left\{
\begin{aligned}
\dive (y^{1-2s} \nabla v )  = &  0  \quad  & \text{in} \quad  B^+_R,\\
-\lim _{y \rightarrow 0 ^+} y^{1-2s} v_y (x,y)  = & \kappa _s g(t_r(v)(x) )) \quad & \text{in} \quad B^N _R,
\end{aligned}
\right.
\end{equation}
if, for all $\varphi \in C ^\infty _0 (B ^+_R \cup B^N _R)$, we have
\begin{equation}\label{defi_fraca}
\int _{B_R ^+} y^{1-2s} \left\langle  \nabla v , \nabla \varphi \right\rangle \dxdy = \kappa _s \int_{B_R ^N} g(t_r (v ) ) t_r ( \varphi ) \dx.
\end{equation}
Thus $w=E_s (u)$ is a weak solution of \eqref{weak_final} with $g(t)=f(t)$ if, and only if, $u$ is a weak solution of Eq.~\eqref{P}. In the first result of this section we derive, following the regularity results of \cite{frac_niremberg}, sufficient conditions over the harmonic extension which ensures the validity of identity \eqref{pohozaev}. 
\begin{proposition}\label{main_reg}
Let $v \in H^1 (B ^+ _R , y ^{1-2s} )$ be a weak solution of \eqref{weak_final}. Suppose that $g \in C^1 (\mathbb{R} )$ satisfies \eqref{ge_grow}. If $t_r (v) \in L ^p _{\loca} (B ^N _r ),$ for some $p >2^\ast _s,$ then for any
$R>0$ there exists  $0<y_0,\ r<R$ with $B ^N _r \times [0,y_0] \subset B ^+ _R,$ and $\alpha \in (0,1),$ such that
\begin{equation}\label{regularity}
v,\ \nabla _x v,\ y^{1-2s}v_y \in C^{0, \alpha} (B ^N _r \times [0, y_0 ]),
\end{equation}
where $\nabla _x v = (v_{x_1} , \ldots , v_{x_N}).$
\end{proposition}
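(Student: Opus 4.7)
The plan is to combine the local regularity theory for degenerate elliptic equations with $A_2$-weights (the class to which $|y|^{1-2s}$ belongs, by the observation made just before \eqref{sol_div}) with a bootstrap argument driven by the hypothesis $t_r(v) \in L^p_{\loca}(B^N_R)$, $p>2^\ast_s$. Throughout I would localize the problem near a boundary point $(x_0,0) \in B^N_R$ and work on shrinking half-cylinders $B^N_{r_i} \times [0,y_i] \subset B^+_R$, freely reducing $r_i, y_i$ at each step.

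First I would upgrade the integrability of $t_r(v)$ to local boundedness. Growth \eqref{ge_grow} together with $t_r(v) \in L^p_{\loca}$ gives $g(t_r(v)) \in L^{p/(2^\ast_s - 1)}_{\loca}(B^N_R)$, and since $p > 2^\ast_s$ one has $p/(2^\ast_s-1) > 2^\ast_s /(2^\ast_s-1) = 2N/(N+2s)$, i.e.\ the boundary datum is in a subcritical Lebesgue space relative to the trace exponent. Testing the weak formulation \eqref{defi_fraca} with suitable truncations $\eta^2 (v-k)_+^\beta$ and running a Moser-type iteration in the weighted space $H^1(B^+_R,y^{1-2s})$, in the spirit of the boundary $L^\infty$ estimates of \cite{frac_niremberg}, yields $v \in L^\infty(B^N_{r_1} \times [0,y_1])$.

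Next, with $v$ locally bounded up to $\{y=0\}$, the boundary datum $h(x):=g(t_r(v)(x))$ is itself locally bounded (since $g$ is continuous). Viewing \eqref{weak_final} as a linear degenerate Neumann problem with bounded data, the De Giorgi--Nash--Moser theory for $A_2$-weighted operators (as applied in \cite{frac_niremberg}) gives boundary Hölder continuity: $v \in C^{0,\alpha_0}(B^N_{r_2} \times [0,y_2])$ for some $\alpha_0 \in (0,1)$. Since $g \in C^1$, the composition $h = g \circ t_r(v)$ lies in $C^{0,\alpha_0}(B^N_{r_2})$. I then invoke the weighted boundary Schauder-type estimate from \cite{frac_niremberg} for the operator $\dive(y^{1-2s}\nabla \cdot)$: if the Neumann datum is $C^{0,\alpha_0}$, the solution has $\nabla_x v$ and the conormal flux $y^{1-2s} v_y$ both in $C^{0,\alpha}$ up to the flat boundary for some possibly smaller $\alpha \in (0,\alpha_0]$. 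Choosing $r,y_0$ small enough so that $B^N_r \times [0,y_0]$ sits inside the neighborhood produced by this final step yields \eqref{regularity}.

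The principal obstacle lies in the very first step: because the weight $|y|^{1-2s}$ degenerates (or blows up) at $y=0$, the Caccioppoli estimate coming from \eqref{defi_fraca} is only available in the weighted Sobolev space, and the boundary term involves a possibly supercritical nonlinear function of $v|_{y=0}$; the gap between the trace exponent $2^\ast_s$ and the assumed higher integrability $p > 2^\ast_s$ is precisely what makes the Moser iteration terminate rather than loop. The rest of the argument, while technical, is a standard linearize-and-bootstrap scheme once the machinery of \cite{frac_niremberg} is in hand.
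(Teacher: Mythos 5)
Your proof is essentially correct and follows the same strategy as the paper: reduce to the regularity machinery of \cite{frac_niremberg} for the degenerate Neumann problem, bootstrapping from local integrability to boundedness, then to H\"older continuity of $v$, and finally (using $g\in C^1$ so that the composed boundary datum is H\"older) to $C^{1,\alpha}$ regularity of $v$ and $y^{1-2s}v_y$ up to $\{y=0\}$. The paper packages the first step slightly differently and more economically: rather than running the Moser iteration by hand, it writes $g(t_r v)=a(x)\,t_r v + b(x)$ with $a(x)=\sgn(t_r v)\,g(t_r v)/(1+|t_r v|)$ and $b(x)=g(t_r v)/(1+|t_r v|)$, and observes that $a,b\in L^q_{\loca}$ for every $q$ with $N/2s<q<p/(2^\ast_s-2)$ (the interval being nonempty precisely because $p>2^\ast_s$ and $2^\ast_s/(2^\ast_s-2)=N/2s$). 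This casts \eqref{weak_final} as a linear degenerate Neumann problem whose boundary coefficient is in $L^q$ for some $q>N/2s$, which is exactly the subcritical threshold in \cite[Proposition 2.19]{frac_niremberg}; the cited proposition then delivers $L^\infty\to C^{0,\alpha}\to C^{1,\alpha}$ in one stroke. Your exponent check against $2N/(N+2s)$ is correct arithmetic but is not quite the operative threshold: the relevant comparison is that the \emph{coefficient} $a$ lies in $L^q$ with $q>N/2s$, not that the datum $g(t_r v)$ itself lies above $2N/(N+2s)$; your subsequent appeal to Moser iteration closing because of the gap $p>2^\ast_s$ is, however, exactly the right intuition, and it is what the paper's factorization makes precise.
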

\begin{proof}
In fact, since
\begin{equation*}
\frac{g(t_r v)}{1+|t_r v|}\in L _{\loca} ^q (\mathbb{R}^N),\quad \forall \; N/2s<q<p/(2^\ast_s -2), 
\end{equation*}
we can see that \eqref{regularity} follows taking 
\begin{equation*}
g(t_r v) = \frac{g(t_r v)}{1+|t_r v|} \sgn (t_r v) t_r v + \frac{g(t_r v )}{1+|t_r v|},
\end{equation*}
and proceeding analogously to the proof of \cite[Proposition 2.19]{frac_niremberg},  by applying \cite[Proposition 2.6. Proposition 2.13, Theorem 2.14 and Lemma 2.18]{frac_niremberg}.
\end{proof}

In order to apply Proposition~\ref{main_reg} we need to prove a Brezis-Kato type result (see \cite{breziskato}) for solutions of Eq.~\eqref{P}. Although a similar result can be found in \cite[Lemma 3.5]{moustapha}, the absence of singularity in Eq. \eqref{P} allows us to obtain a simpler proof. To achieve that, we strongly rely in the following lemmas, which enable us to proceed as in \cite{breziskato} (cf. \cite[Proposition 5.1]{barrios} or \cite[Theorem 1.2]{aliang}).

\begin{lemmaletter}\cite[Theorem~1.3]{fabes}\label{fabes}
For any $R>0,$ there exists $\sigma>1$ and $C_R>0$ depending on $R,$ such that
\begin{equation*}
\left( \int _{B_R} |y|^{1-2s} |v|^{2\sigma} \dxdy\right) ^{1/ \sigma}\leq C_R \int _{B_R} |y|^{1-2s} |\nabla v| ^2 \dxdy, \; \forall  v \in C^\infty _0 (B_R).
\end{equation*}
\end{lemmaletter}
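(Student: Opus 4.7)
The plan is to derive this weighted Sobolev--Poincaré inequality from the Muckenhoupt $A_2$ condition on $w(z) := |y|^{1-2s}$, via a potential-theoretic representation combined with weighted estimates for the Riesz potential. The fact that $w \in A_2(\mathbb{R}^{N+1})$ for $0<s<1$ has already been recorded just above in the paper, so I may freely use the resulting doubling property and, crucially, the self-improvement of the class: there exists $\epsilon > 0$ such that $w \in A_{2-\epsilon}$, hence in particular $w$ satisfies a reverse Hölder inequality on balls of $\mathbb{R}^{N+1}$.

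First I would establish a pointwise representation for compactly supported $v$. For $v \in C^\infty_0(B_R)$, extending $v$ by zero outside $B_R$, integrating along rays from $z$ in each direction $\omega \in S^N$ and then averaging over $\omega$, the fundamental theorem of calculus yields the standard bound
\begin{equation*}
|v(z)| \leq C_N \int_{B_R} \frac{|\nabla v(\zeta)|}{|z - \zeta|^{N}}\, d\zeta \;=\; C_N\, I_1(|\nabla v|)(z),\qquad z \in B_R,
\end{equation*}
where $I_1$ denotes the Riesz potential of order one on $\mathbb{R}^{N+1}$. This reduces the lemma to a weighted mapping property of $I_1$.

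Second I would invoke the Muckenhoupt--Wheeden theorem on the boundedness of Riesz potentials between weighted Lebesgue spaces. Since $w \in A_2$, and in fact $w \in A_{2-\epsilon}$ for some $\epsilon > 0$, one obtains $\sigma > 1$ (coming precisely from the self-improvement exponent) and a constant $C_R$ such that
\begin{equation*}
\|I_1 f\|_{L^{2\sigma}(B_R,\, w)} \leq C_R\, \|f\|_{L^{2}(B_R,\, w)}
\end{equation*}
for every $f \in L^{2}(B_R,w)$ supported in $B_R$. Applying this with $f = |\nabla v|$ and combining with the pointwise estimate gives
\begin{equation*}
\left(\int_{B_R} |y|^{1-2s}\, |v|^{2\sigma}\, dz\right)^{\!1/(2\sigma)} \leq C_R \left(\int_{B_R} |y|^{1-2s}\, |\nabla v|^{2}\, dz\right)^{\!1/2},
\end{equation*}
and squaring delivers the claim.

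The main obstacle will be the weighted Riesz-potential bound in the second step. If one prefers not to cite Muckenhoupt--Wheeden as a black box, the alternative is the original Fabes--Kenig--Serapioni strategy: first establish the weighted Poincaré inequality $\int_{B_R} w\, v^2 \leq C R^2 \int_{B_R} w\, |\nabla v|^2$ (the $\sigma = 1$ version of the statement, which follows from the same representation together with doubling of $w$), and then upgrade the exponent to some $\sigma > 1$ via the reverse Hölder inequality for $A_2$ weights feeding a Moser-type truncation argument. Either route produces the stated inequality; the potential-theoretic route is shorter but concentrates the technical burden in the weighted boundedness of $I_1$, whereas the iterative route distributes it across a Poincaré--Sobolev chain.
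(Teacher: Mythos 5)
This lemma is not proved in the paper at all: it is Theorem~1.3 of Fabes--Kenig--Serapioni, quoted as a lemma with a citation and no argument. So there is no paper proof to compare against; what you have written is a sketch of the FKS proof itself, and it is worth checking that sketch on its own terms.

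The structure is right: the pointwise bound $|v(z)|\leq C\,I_1(|\nabla v|)(z)$ on $\mathbb{R}^{N+1}$ (note the kernel exponent $N=(N+1)-1$ is correct), followed by a weighted estimate for $I_1$. The weak point is the second step. What you need is a \emph{same-weight, off-diagonal, localized} estimate of the form $\|I_1 f\|_{L^{2\sigma}(B_R,w)}\lesssim \|f\|_{L^2(B_R,w)}$ for $f$ supported in $B_R$, with $\sigma>1$. The Muckenhoupt--Wheeden theorem does not deliver this: that result characterizes when $I_\alpha:L^p(v^p)\to L^q(v^q)$ with the diagonal relation $1/q=1/p-\alpha/n$, which cannot produce the same weight $w$ on both sides of an off-diagonal $(p,q)=(2,2\sigma)$ inequality. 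The estimate you actually need is precisely FKS's Theorem~1.2 (a localized weighted Riesz-potential bound proved directly from the $A_2$ condition, doubling, and a Hedberg-type splitting of the potential). Also, the gain $\sigma>1$ does not come from the self-improvement $A_2\subset A_{2-\epsilon}$; even for the constant weight one already has $\sigma$ up to $n/(n-1)$ from the dimension, and the reverse H\"older inequality for $A_2$ weights only pushes $\sigma$ slightly beyond that. If you replace the Muckenhoupt--Wheeden citation by FKS Theorem~1.2 (or prove the localized weighted bound directly), the argument closes. Your alternative iterative route (weighted Poincar\'e plus a Moser-type upgrade) would also work and is in the spirit of later treatments, but it is not the original FKS path and would require the same careful bookkeeping with the doubling exponent to extract $\sigma>1$.
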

\begin{lemmaletter}\cite[Lemma~2.6]{moustapha} \label{fall}
Let $\xi \in C (\mathbb{R}^{N+1})$ such that $\xi (z) =0$ for all $|z| \geq R.$ There exist $C>0$ such that
\begin{equation*}
\left( \int_{B^N _R} |v \xi | ^{2^\ast _s} \dxdy \right) ^{2/2_s ^\ast } \leq C \int _{B^+_R} y ^{1-2s} |\nabla (v \xi )| ^2 \dxdy, \; \forall  v \in H^1(B^+_R , y^{1-2s} ).
\end{equation*}
\end{lemmaletter}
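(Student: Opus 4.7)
The inequality is a weighted trace-Sobolev estimate; the natural approach is to reduce it to the scale-invariant trace inequality for the $s$-harmonic extension on the whole half-space $\mathbb{R}^{N+1}_+$, and then invoke the Sobolev embedding $\mathcal{D}^{s,2}(\mathbb{R}^N)\hookrightarrow L^{2^{\ast}_s}(\mathbb{R}^N)$ recalled in \eqref{embds}. Concretely, I will (i) use the support condition on $\xi$ to extend $w:=v\xi$ by zero to all of $\mathbb{R}^{N+1}_+$, (ii) identify the boundary trace $u:=t_r(\tilde w)$ as an element of $\mathcal{D}^{s,2}(\mathbb{R}^N)$ supported in $\overline{B^N_R}$, (iii) compare the weighted Dirichlet energy of $\tilde w$ with that of the $s$-harmonic extension $E_s(u)$ to bound $\|u\|$, and (iv) feed this into the Sobolev inequality.

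\textbf{Step-by-step execution.} Set $\tilde w$ equal to $v\xi$ on $B^+_R$ and to $0$ on $\mathbb{R}^{N+1}_+\setminus B^+_R$. The hypothesis that $\xi$ vanishes for $|z|\ge R$ (together with the standing assumption that $\xi$ be smooth enough so that $v\xi\in H^1(B^+_R,y^{1-2s})$) guarantees that $\tilde w$ belongs to $H^1(\mathbb{R}^{N+1}_+,y^{1-2s})$ with no new singularity on the lateral boundary $\partial B^+_R\cap\{y>0\}$, and
\begin{equation*}
\int_{\mathbb{R}^{N+1}_+} y^{1-2s}|\nabla \tilde w|^2\dxdy \;=\; \int_{B^+_R} y^{1-2s}|\nabla(v\xi)|^2\dxdy.
\end{equation*}
Let $u:=t_r(\tilde w)\in \mathcal{D}^{s,2}(\mathbb{R}^N)$; since $\tilde w=0$ for $|x|\ge R$ at $y=0$, the trace $u$ is supported in $\overline{B^N_R}$, so
\begin{equation*}
\int_{\mathbb{R}^N}|u|^{2^{\ast}_s}\dx \;=\; \int_{B^N_R}|v\xi|^{2^{\ast}_s}\dx.
\end{equation*}

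\textbf{Key step: the trace inequality.} I invoke the variational characterization of $E_s$ coming from \eqref{sol_div}: among all $W\in H^1(\mathbb{R}^{N+1}_+,y^{1-2s})$ with $t_r W=u$, the $s$-harmonic extension $E_s(u)$ minimizes $W\mapsto \int y^{1-2s}|\nabla W|^2$, and the minimum equals $\kappa_s\|u\|^2$. Applied to $\tilde w$ this gives
\begin{equation*}
\kappa_s\|u\|^2 \;\leq\; \int_{\mathbb{R}^{N+1}_+} y^{1-2s}|\nabla \tilde w|^2\dxdy.
\end{equation*}
Combining this with the Sobolev inequality $\|u\|_{L^{2^{\ast}_s}(\mathbb{R}^N)}^2\leq \mathcal{K}_{\ast}^{2/2^{\ast}_s}\|u\|^2$ of \eqref{embds} and the two displayed identities above yields the claim with $C=\kappa_s^{-1}\mathcal{K}_{\ast}^{2/2^{\ast}_s}$.

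\textbf{Main obstacles.} The delicate point is the minimizing property used in the Key Step: it is a classical ingredient of the Caffarelli–Silvestre theory, but to apply it to an arbitrary $\tilde w$ of low regularity I first approximate by test functions in $C^\infty_0(\overline{\mathbb{R}^{N+1}_+})$ dense in the weighted space (cf.\ the density recalled before the trace operator is introduced), verify the inequality there via integration by parts against $E_s(u)$, and pass to the limit using continuity of the trace and of the Dirichlet form. A secondary technical point is that the extension by zero of $v\xi$ across the lateral boundary must genuinely lie in $H^1(\mathbb{R}^{N+1}_+,y^{1-2s})$; this is where the vanishing of $\xi$ on $\{|z|\ge R\}$ (and mild regularity of $\xi$) is used, ensuring that no spurious boundary-jump contribution to the weak gradient appears. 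Everything else is routine bookkeeping.
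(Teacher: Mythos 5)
The paper does not prove this lemma: it is imported verbatim, with citation, from Fall--Felli \cite{moustapha}, so there is no ``paper's own proof'' to compare against. Your blind argument is, however, essentially the standard derivation and is correct. The three ingredients you use --- extension by zero (legitimate because $\xi$ vanishes on $\{|z|\ge R\}$, so $v\xi$ has zero trace on the lateral boundary and its zero extension stays in $H^1(\mathbb{R}^{N+1}_+, y^{1-2s})$), the Dirichlet principle for the Caffarelli--Silvestre extension (any finite-energy extension of $u$ has weighted Dirichlet energy at least $\kappa_s\|u\|^2$, equality for $E_s(u)$ by the third line of \eqref{sol_div}, with the cross term killed by integration by parts against the degenerate-harmonic $E_s(u)$), and the fractional Sobolev inequality \eqref{embds} --- combine exactly as you indicate to give the stated inequality with $C=\kappa_s^{-1}\mathcal{K}_*^{2/2^*_s}$. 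This is indeed how the result is proved in the source.

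Two minor points worth recording. First, as you note, the hypothesis $\xi\in C(\mathbb{R}^{N+1})$ is formally too weak for $\nabla(v\xi)$ to be defined; in practice $\xi$ is Lipschitz (as in the cutoffs used in Proposition~\ref{poho_prop}), and if $v\xi\notin H^1(B_R^+,y^{1-2s})$ the inequality is vacuous with right-hand side $+\infty$, so nothing is lost. Second, your justification of the Dirichlet principle by density and integration by parts is the right level of care: the orthogonality $\int_{\mathbb{R}^{N+1}_+} y^{1-2s}\,\nabla E_s(u)\cdot\nabla(W-E_s(u))\,\dxdy=0$ for $W$ with trace $u$ is exactly what makes $E_s(u)$ the energy minimizer, and it follows from the first equation in \eqref{sol_div} once one approximates $W-E_s(u)$ by smooth compactly supported test functions vanishing on $\{y=0\}$. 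Nothing else needs to be added.
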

\begin{proposition}\label{poho_prop}
Assume that condition \eqref{f_2} holds. Let $u \in \mathcal{D}^{s,2} (\mathbb{R}^N)$ be a weak solution of Eq.~\eqref{P} for the autonomous case, then $u \in L_{\loca} ^p (\mathbb{R}^N),$ for all $p\geq1.$
\end{proposition}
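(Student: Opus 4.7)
The plan is to work with the $s$-harmonic extension $w = E_s(u) \in H^1(B_R^+, y^{1-2s})$, which by \eqref{sol_div}--\eqref{defi_fraca} is a weak solution of \eqref{weak_final} with $g=f$ on every half-ball $B_R^+$. I would then run a Brezis--Kato / Moser iteration at the level of the extended degenerate elliptic problem, exploiting the weighted Sobolev inequality of Lemma~\ref{fabes} together with the trace estimate of Lemma~\ref{fall}. To set the stage, write $f(x,u) = V(x)\,u$ with $V(x) := f(x,u(x))/u(x)$ (and $0$ where $u=0$); by \eqref{f_2} we have $|V(x)| \leq C|u(x)|^{2_s^\ast - 2}$, so $V \in L^{N/(2s)}(\mathbb{R}^N)$ thanks to the continuous embedding \eqref{embds}. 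The problem is thereby reduced to a linear one with a critical potential in the natural Lebesgue space for the Brezis--Kato method.

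Next, fix $R>0$ and a cutoff $\eta \in C_c^\infty(B_R^+ \cup B_R^N)$. For parameters $L>1$ and $\beta \geq 1$ set $w_L := \min\{|w|, L\}$ and take $\varphi := w\, w_L^{2(\beta-1)}\eta^2$ as a test function in \eqref{defi_fraca} (justified by density of smooth functions in the weighted Sobolev space). After a standard Cauchy--Schwarz manipulation to absorb cross terms in $\nabla \varphi$, one obtains
\begin{equation*}
\int_{B_R^+} y^{1-2s}\, w_L^{2(\beta-1)}|\nabla w|^2 \eta^2 \,\dxdy \;\leq\; C\beta \int_{B_R^+} y^{1-2s}\, w^2 w_L^{2(\beta-1)}|\nabla \eta|^2 \,\dxdy \;+\; C\beta\, \kappa_s \int_{B_R^N} |V|\, u^2 w_L^{2(\beta-1)}\eta^2 \,\dx.
\end{equation*}
Since $|\nabla(w\, w_L^{\beta-1}\eta)|^2$ is controlled by the left-hand side (up to terms already present on the right), applying Lemma~\ref{fall} to $w\, w_L^{\beta-1}\eta$ yields
\begin{equation*}
\Bigl(\int_{B_R^N} |u\, u_L^{\beta-1}\eta|^{2_s^\ast}\,\dx\Bigr)^{2/2_s^\ast} \;\leq\; C\beta^2 \int_{B_R^+} y^{1-2s}\, w^2 w_L^{2(\beta-1)}|\nabla \eta|^2 \,\dxdy \;+\; C\beta^2 \int_{B_R^N} |V|\, u^2 u_L^{2(\beta-1)}\eta^2 \,\dx,
\end{equation*}
where $u_L := \min\{|u|,L\}$ is the trace of $w_L$.

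For the Brezis--Kato splitting, given $\delta>0$ decompose $V = V_1 + V_2$ with $\|V_1\|_{L^\infty(B_R^N)} \leq M_\delta$ and $\|V_2\|_{L^{N/(2s)}(B_R^N)} < \delta$. H\"older's inequality with exponents $N/(2s)$ and $N/(N-2s)$ gives
\begin{equation*}
\int_{B_R^N} |V_2|\, u^2 u_L^{2(\beta-1)}\eta^2 \,\dx \;\leq\; \delta \Bigl(\int_{B_R^N} |u\, u_L^{\beta-1}\eta|^{2_s^\ast}\,\dx\Bigr)^{2/2_s^\ast},
\end{equation*}
so choosing $\delta = \delta(\beta)$ small enough allows this term to be absorbed on the left, leaving only the harmless $L^\infty$ piece $M_\delta \int u^2 u_L^{2(\beta-1)}\eta^2\, \dx$. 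Letting $L \to \infty$ by monotone convergence (and using that the $|\nabla \eta|^2$ term is finite by the trace/energy bound on $w$), one concludes the implication: $u \in L^{2\beta}_{\loca}(\mathbb{R}^N)\Rightarrow u \in L^{\beta\, 2_s^\ast}_{\loca}(\mathbb{R}^N)$. Since $2_s^\ast/2 > 1$, starting from $\beta_0 = 2_s^\ast/2$ (known from \eqref{embds}) and iterating $\beta_{n+1} = (2_s^\ast/2)\beta_n$ reaches any exponent $p \geq 1$ in finitely many steps.

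The main obstacle I anticipate is the careful handling of the non-smooth test function $w\, w_L^{2(\beta-1)}\eta^2$ in the weighted space $H^1(B_R^+, y^{1-2s})$: one must verify that $w\,w_L^{\beta-1}\eta$ lies in $H^1(B_R^+, y^{1-2s})$ with controlled norm so that Lemma~\ref{fall} applies and the limit $L\to\infty$ can be passed inside the integrals. This is essentially a technical approximation exercise (truncate $w$ at height $L$, mollify, use density of $C^\infty\cap H^1$ in the weighted Sobolev space as noted after the definition of $H^1(Q,|y|^{1-2s})$), but it must be executed with care because the weight $y^{1-2s}$ degenerates on the trace hyperplane.
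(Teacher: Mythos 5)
Your proposal follows the paper's strategy quite closely: pass to the Caffarelli--Silvestre extension $w=E_s(u)$, test the weak formulation \eqref{defi_fraca} with a truncated power of $w$ times a squared cutoff, apply Lemma~\ref{fall} to turn the resulting gradient bound into a trace $L^{2_s^\ast}$ bound, perform a Brezis--Kato split of the critical potential, absorb the small piece, and iterate. The cosmetic differences (your $V(x)=f(u)/u$ vs.\ the paper's $a(x)=|f(u)|/(1+|u|)$; your $w_L^{\beta-1}=\min\{|w|,L\}^{\beta-1}$ vs.\ the paper's $w_{\beta,L}=\min\{|w|^{\beta},L\}$; your $V=V_1+V_2$ decomposition vs.\ the paper's explicit threshold $\{a\ge L_0\}$) do not change the argument. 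So this is essentially the same route.

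There is one place, however, where you wave your hand over a real step. The ``$|\nabla\eta|^2$ term,'' i.e.\ $\int_{B_R^+} y^{1-2s} w^2 w_L^{2(\beta-1)} |\nabla\eta|^2\,\dxdy$, is \emph{not} finite merely ``by the trace/energy bound on $w$'': what you know a priori is $\nabla w\in L^2(\mathbb{R}^{N+1}_+,y^{1-2s})$ and $u\in L^{2_s^\ast}(\mathbb{R}^N)$, neither of which immediately controls $\int_{B_R^+}y^{1-2s}|w|^{2\beta}\,\dxdy$ for $\beta>1$. The paper handles this by extending $w$ to the full ball by even reflection $\mathcal{R}$ and applying the weighted Sobolev inequality of Lemma~\ref{fabes}: $\int_{B_R}|y|^{1-2s}|\mathcal{R}(w)|^{2\sigma}\,\dxdy\lesssim (\int_{B_R}|y|^{1-2s}|\nabla\mathcal{R}(w)|^2\,\dxdy)^{\sigma}$, which yields the bound only under the constraint $\beta+1\leq\sigma$. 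That constraint shapes the admissible iteration exponents: the paper must start with $\beta_1=\min\{2_s^\ast/2,\sigma\}-1$, not $2_s^\ast/2$. Your iteration $\beta_{n+1}=(2_s^\ast/2)\beta_n$ with $\beta_0=2_s^\ast/2$ silently assumes $\sigma\geq 2_s^\ast/2$ and does not explain how the interior cutoff term is controlled once $\beta+1$ exceeds $\sigma$ in later steps. To close the argument you need to incorporate Lemma~\ref{fabes} explicitly (together with the reflection operator) into the estimate of the cutoff term and adjust the iteration exponents accordingly.
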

\begin{proof}
Let $w = E_s(u)$ and $\xi \in C_0 ^\infty (\mathbb{R}^{N+1}:[0,1])$ such that
\begin{equation*}
\xi(z)= \left\{
\begin{aligned}
&1, \quad\text{if }|z| <R/2\\
&0,\quad\text{if }  |z|\geq R
\end{aligned}
\right.
\qquad\text{ and}\qquad|\nabla \xi (z)| \leq C \quad \forall  z \in \mathbb{R}^{N+1},
\end{equation*}
for some $C>0.$ Since the map $ t \mapsto t \min \{|t| ^\beta ,L \},$ $\beta ,\ L>0,$ is Lipschitz in $\mathbb{R},$ considering $w_{\beta ,L} := \min \{ |w| ^\beta , L\}$ we have $ww_{\beta ,L}  \in H^1 (B ^+ _R , y ^{1-2s} ),$ consequently using inequality \eqref{tracinho} in a density argument one can see that $ ww_{\beta ,L} ^2 \xi ^2$ can be taken as a test function in definition \eqref{defi_fraca}. The main idea is to get the estimate
\begin{equation}\label{bk0}
\int _{B _R ^+} y ^{1-2s} |\nabla (w w_{\beta ,L} \xi)|^2 \dxdy \leq C,
\end{equation}
for a suitable $\beta$ and $C>0$ which does not depend on $L.$ The next step is to use Fatou Lemma and Lemma \ref{fall} to obtain
\begin{equation*}
\int _{B^N _R} |u| ^{(\beta+1 )2^\ast _s} \dx \leq C.
\end{equation*}
This leads to a iteration procedure in $\beta$ which implies in $u \in L ^p (B^N _R)$ for all $p>1.$
To do so, we start taking
\begin{equation*}
a(x) := \frac{|f(u)|}{1+|u|} \in L ^{N/2s}_{\loca} ( \mathbb{R}^N),
\end{equation*}
which implies
\begin{equation}\label{bk1}
\int _{B ^+_R} y^{1-2s} \left\langle \nabla w , \nabla (w w_{\beta ,L} ^2 \xi ^2 )\right\rangle \dxdy \leq 2 \kappa _s \int _{B ^N _R } a(x) (1 + u ^2) u _{\beta , L} ^2 \xi^2 \dx,
\end{equation}
 where we used that $(1+t)t \leq 2(1+t^2),\ t>0$ and $t_r (w w_{\beta ,L} ^2 \xi ^2) = u u_{\beta ,L} ^2 \xi(\cdot,0) ^2.$ We now compute the left side of the inequality \eqref{bk1} and use the following identity
 \begin{equation*}
 w \left\langle \nabla w , \nabla (|w| ^{2 \beta } ) \right\rangle = \frac{\beta}{2} |w| ^{2 (\beta -1 )} |\nabla (w ^2 ) | ^2,
 \end{equation*}
 to conclude
 \begin{multline}\label{bk2}
\int _{B^+ _R} y^{1-2s} \min \{|w|^{2 \beta}  , L ^2 \} |\nabla w| ^2 \xi ^2 \dxdy + \frac{\beta}{2}\int _{\{ |w|^{2 \beta } \leq L^2 \} \cap B^+ _R } y^{1-2s} |w| ^{2 (\beta -1 )} |\nabla (w ^2 ) | ^2 \xi ^2 \dxdy \\ \leq 2 \kappa _s \int _{B ^N _R } a(x) (1 + u ^2) u _{\beta , L} ^2 \xi ^2 \dx - 2 \int _{B ^+ _R} y ^{1-2s} w \min \{ |w| ^{2\beta} , L^2\} \xi \left\langle \nabla w , \nabla \xi \right\rangle \dxdy.
 \end{multline}
 Using the Cauchy inequality (with $\varepsilon = 1/4$) we have
 \begin{multline}\label{bk3}
 - 2 \int _{B ^+ _R} y ^{1-2s} w \min \{ |w| ^{2\beta} , L^2\} \xi \left\langle \nabla w , \nabla \xi \right\rangle \dxdy \\ \leq \frac{1}{2} \int _{B ^+ _R} y^{1-2s} \min \{ |w| ^{2\beta} , L^2\} |\nabla w| ^2 \xi ^2 \dxdy + C \int _{B ^+ _R} y^{1-2s} w^2 \min \{ |w| ^{2\beta} , L^2\} |\nabla \xi |^2 \dxdy,
\end{multline}
where $C>0$ is independent of $L.$ From replacing \eqref{bk3} in \eqref{bk2}, we obtain
\begin{multline}\label{bk4}
\frac{1}{2} \int _{B ^+ _R} y^{1-2s} \min \{ |w| ^{2\beta} , L^2\} |\nabla w| ^2 \xi ^2 \dxdy + \frac{\beta}{2}\int _{\{ |w|^{2 \beta } \leq L^2 \} \cap B^+ _R } y^{1-2s} |w| ^{2 (\beta -1 )} |\nabla (w ^2 ) | ^2 \xi ^2 \dxdy \\ \leq C \int _{B ^+ _R} y^{1-2s} w^2 \min \{ |w| ^{2\beta} , L^2\} |\nabla \xi |^2 \dxdy +2 \kappa _s \int _{B ^N _R } a(x) (1 + u ^2) u _{\beta , L} ^2 \xi ^2 \dx.
\end{multline}
Now using
\begin{equation*}
\beta ^2 |w| ^{2(\beta -1)} \left| \nabla (w ^2 )\right| ^2 = 4 w ^2 \left| \nabla (|w| ^\beta )\right| ^2,
\end{equation*}
together with inequality \eqref{bk4}, we can finally estimate \eqref{bk0},
\begin{multline}\label{bk5}
\int _{B _R ^+} y ^{1-2s} |\nabla (w w_{\beta ,L} \xi)|^2 \dxdy\\  \leq C \int _{B ^+ _R} y^{1-2s} w^2 \min \{ |w| ^{2\beta} , L^2\} |\nabla \xi |^2 \dxdy +2 \kappa _s \int _{B ^N _R } a(x) (1 + u ^2) u _{\beta , L} ^2 \xi ^2 \dx.
\end{multline}
It remains to estimate the last two terms in \eqref{bk5}. Assuming $|u|^{\beta +1} \in L ^2 (B^N _R),$ we get
\begin{align*}
\int _{B ^N _R }  a(x)u ^2u _{\beta , L} ^2 \xi^2 \dx &\leq L_0\int _{B ^N _R} |u| ^{2(\beta +1)}\xi ^2 \dx+ \int _{B ^N _R\cap \{a(x)\geq L_0\}}a(x)u ^2 u _{\beta , L} ^2 \xi^2 \dx \\&\leq C_1 L_0 + \tilde{C}_1\varepsilon (L_0 ) \left(\int _{B _R ^+} y ^{1-2s} |\nabla (w w_{\beta ,L} \xi)|^2 \dxdy\right)^{2/2^\ast _s},
\end{align*}
where
\begin{equation*}
\varepsilon (L_0 ) := \left(\int _{ \{a(x) \geq L_0 \} } a ^{N/2s}(x) \dx \right) ^{2s/N} \rightarrow 0,\text{ as }L_0 \rightarrow \infty. 
\end{equation*}
By the same calculation and using $\min \{ |t| ^\beta ,L \} \leq |t|\min \{ |t| ^\beta ,L \} +1, L>1,$ we obtain
\begin{multline*}
\int _{B ^N _R }  a(x) u _{\beta , L} ^2 \xi ^2 \dx \\ \leq C_2 L_0  + \tilde{C}_2\varepsilon (L_0 ) \left[ \left( \int_{B^+_R } y ^{1-2s} |\nabla (w w_{\beta ,L} \xi)|^2 \dxdy \right) ^{2/2^\ast _s} + \left( \int _{B^N _R} |\xi | ^{2_s ^\ast} \dx \right) ^{2/2^\ast _s} \right],
\end{multline*}
Thus, we can take $L_0$ large enough such that 
\begin{equation*}
\int _{B _R ^+} y ^{1-2s} |\nabla (w w_{\beta ,L} \xi)|^2 \dxdy \leq C_3 \int _{B ^+ _R} y^{1-2s} w^2 \min \{ |w| ^{2\beta} , L^2\} |\nabla \xi |^2 \dxdy.
\end{equation*}
Finally, assume that $\beta+1 \leq \sigma$, where $\sigma$ is given in Lemma \ref{fabes}. Using the operator extension by reflection $\mathcal{R}: H^1 (B^+_R,y^{1-2s} )\rightarrow H^1 (B_R,|y|^{1-2s} )$ given by
\begin{equation*}
\mathcal{R}(w)(x,y)=\left\{
\begin{aligned}
&w(x,y),\quad& \text{if }y>0,\\
&w(x,-y),\quad& \text{if }y \leq 0,
\end{aligned}
\right.
\end{equation*}
(see for instance \cite[Section 4]{caf_silv}), we may apply Lemma \ref{fabes} for an appropriated sequence of functions in $C^\infty _0 (\mathbb{R}^{N+1}),$ converging to $\mathcal{R}(w)$ in $H^1 (B_R,|y|^{1-2s})$ to get
\begin{equation*}
\int _{B ^+ _R} y^{1-2s} w^2 \min \{ |w| ^{2\beta} , L^2\} |\nabla \xi |^2 \dxdy \leq C_4\int_{B_R} |y|^{1-2s} |\nabla (\mathcal{R}(w))|^2 \dxdy\leq C_5.
\end{equation*}
We take $\beta=\beta_1=\min \{2^\ast _s/2, \sigma \} -1$ and $\beta_{i+1} = \min\{2^{\ast } _s/2,\sigma \} (2 ^\ast _s/2)^i - 1,$ $i=0,1,\ldots,$ to obtain that $u\in L ^{\beta_{i+1} } _{\loca} (\mathbb{R}^N).$
\end{proof} Summing up all the previous results we can finally conclude the validity of identity \eqref{pohozaev} and the desired local regularity.
\begin{proposition}\label{prop_pohozaev}
If $f(t)\in C^1 (\mathbb{R})$ and satisfies \eqref{f_2}, then every weak solution of Eq.~\eqref{P} for the autonomous case belongs to $C^1 (\mathbb{R}^N).$ Moreover, the Pohozaev identity \eqref{pohozaev} holds true.
\end{proposition}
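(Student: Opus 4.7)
The overall strategy is to combine the two technical results just developed in this section: the Brezis--Kato type bootstrap of Proposition~\ref{poho_prop} and the boundary Hölder estimate of Proposition~\ref{main_reg}. Together these deliver enough regularity on the Caffarelli--Silvestre extension $w = E_s(u)$ to run a truncated version of the classical Berestycki--Lions multiplier argument, now applied in the half-space $\mathbb{R}^{N+1}_+$ with the weight $y^{1-2s}$.

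\textbf{Step 1: $C^1$-regularity of $u$.} Let $u \in \mathcal{D}^{s,2}(\mathbb{R}^N)$ be a weak solution of \eqref{P} in the autonomous case. Proposition~\ref{poho_prop} gives $u \in L^p_{\loca}(\mathbb{R}^N)$ for every $p \geq 1$. Using the definition of weak solution together with \eqref{sol_div}, its $s$-harmonic extension $w$ is a weak solution of \eqref{weak_final} with $g = f$ on every half-ball $B_R^+$. Since $t_r(w) = u$ now lies in $L^p_{\loca}$ for some $p > 2_s^\ast$, Proposition~\ref{main_reg} is applicable and yields
\[
w,\ \nabla_x w,\ y^{1-2s}w_y \in C^{0,\alpha}\bigl(B_r^N \times [0,y_0]\bigr)
\]
for suitable $r, y_0 > 0$ and $\alpha \in (0,1)$. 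Since $u(x) = w(x,0)$ and $\nabla u(x) = (\nabla_x w)(x,0)$, we obtain $u \in C^{1,\alpha}_{\loca}(\mathbb{R}^N) \subset C^1(\mathbb{R}^N)$.

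\textbf{Step 2: The Pohozaev computation.} To derive \eqref{pohozaev} I would test the extended equation $\dive(y^{1-2s}\nabla w)=0$ against the multiplier $\eta_R(z)\,(z \cdot \nabla w)$ on the truncated region $B_R^+ \cap \{y > \delta\}$, where $z = (x,y)$ and $\eta_R \in C_0^\infty(\mathbb{R}^{N+1})$ is a radial cutoff equal to $1$ on $B_{R/2}$ and supported in $B_R$. In the open half-space $w$ is smooth, so a first integration by parts, the pointwise identity $2\,\nabla w \cdot \mathrm{Hess}(w)\cdot z = z \cdot \nabla|\nabla w|^2$, and the computation $\dive(y^{1-2s}z) = (N+2-2s)\,y^{1-2s}$ followed by a second integration by parts lead to
\[
\frac{N-2s}{2}\int_{B_R^+ \cap \{y>\delta\}} y^{1-2s}|\nabla w|^2\,\eta_R \dxdy = (\text{boundary terms at }y=\delta) + (\text{cutoff terms involving }\nabla\eta_R).
\]
Letting $\delta \to 0^+$ and using the Hölder control of $y^{1-2s}w_y$ from Step 1 together with the boundary condition in \eqref{sol_div}, the flat-boundary contribution converges to $\kappa_s \int_{\mathbb{R}^N} f(u)(x \cdot \nabla u)\,\eta_R\dx$. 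An $x$-integration by parts, together with $f(u)\nabla u = \nabla F(u)$ and $\dive x = N$, rewrites this as $-N\kappa_s \int_{\mathbb{R}^N} F(u)\,\eta_R\dx$ plus a term supported on $\{|x| \in \supp \nabla\eta_R\}$.

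\textbf{Step 3: Passage to the limit $R \to \infty$.} The energy identity of \eqref{sol_div} gives $y^{1-2s}|\nabla w|^2 \in L^1(\mathbb{R}^{N+1}_+)$, while \eqref{f_2} together with the embedding $\mathcal{D}^{s,2}(\mathbb{R}^N) \hookrightarrow L^{2_s^\ast}(\mathbb{R}^N)$ gives $F(u) \in L^1(\mathbb{R}^N)$. A polar-coordinates/Fubini argument then furnishes a sequence $R_k \to \infty$ along which both the cutoff correction and the remaining spherical boundary integrand on $\partial B_{R_k}^+$ have vanishing contribution. Passing to the limit in the identity of Step 2 produces
\[
\frac{N-2s}{2}\,\kappa_s\|u\|^2 = N\kappa_s \int_{\mathbb{R}^N} F(u)\dx,
\]
which, after dividing by $\kappa_s$, is exactly \eqref{pohozaev}.

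\textbf{Main obstacle.} The hardest point is the rigorous justification of the two integrations by parts close to the degenerate boundary $y = 0$, because $\mathrm{Hess}(w)$ is not a priori controlled there; this is why one truncates at $\{y>\delta\}$ and only afterwards exploits the boundary Hölder regularity from Proposition~\ref{main_reg} to pass $\delta \to 0^+$ in the trace term. Selecting the good sequence $R_k$ along which the spherical boundary terms vanish, using only global $L^1$-integrability of the energy density and of $F(u)$, is the second technical point, and is handled exactly as in the classical Berestycki--Lions argument.
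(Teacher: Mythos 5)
Your proposal follows essentially the same route as the paper's proof: combine the Brezis--Kato bootstrap of Proposition~\ref{poho_prop} with the boundary H\"older estimates of Proposition~\ref{main_reg} to get $C^1$-regularity of $u$ and of $y^{1-2s}w_y$ up to $y=0$, then run the Berestycki--Lions multiplier $\langle z,\nabla w\rangle$ against the extension equation on $B_R^+\cap\{y>\delta\}$ with a radial cutoff, pass $\delta\to 0^+$ using the trace regularity and the Neumann condition in \eqref{sol_div}, and finally let the cutoff radius tend to infinity using integrability of $y^{1-2s}|\nabla w|^2$ and $F(u)$ to kill the correction terms. The paper packages the same computation through the pointwise divergence identity \eqref{contao} and a specific cutoff family $\xi_n(z)=\xi(|z|^2/n^2)$, but the decomposition, the order of limits, and the key technical points you flag (the degenerate boundary and the choice of a good radius sequence) coincide with the paper's argument.
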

\begin{proof}
Let $u \in \mathcal{D}^{s,2} (\mathbb{R}^N)$ be a weak solution of Eq.~\eqref{P} for the autonomous case with $f(t)$ satisfying \eqref{f_2}. Consider $w=E_s (u),$ then by Propositions \ref{main_reg}, $w$ possess the regularity \eqref{regularity}. In particular, $\nabla u=\nabla w(x,0) \in C(B^N _r)$ for any $r>0.$ Let $\xi \in C_0 ^\infty (\mathbb{R}:[0,1])$ such that
\begin{equation*}
\xi(t)=
\left\{\begin{array}{llc}
1, \quad\text{if } \quad t \in [-1,1]\\
0,\quad\text{if }  \quad |t|\geq 2
\end{array}
\right.
\qquad\text{ and}\qquad|\xi ' (t)| \leq C \quad  \forall  t\in \mathbb{R},
\end{equation*}
for some $C>0.$ For each $n\in \mathbb{N},$ define $\xi _n : \mathbb{R}^{N+1} \rightarrow \mathbb{R}$ by $\xi _n (z) = \xi (|z|^2/n^2).$ Then $\xi _n \in C_0 ^\infty (\mathbb{R}^{N+1})$ and verifies 
\begin{equation}\label{xin}
|\nabla \xi _n (z)| \leq C \qquad \text{and}\qquad |z||\nabla \xi _n (z)| \leq C\quad \forall z\in \mathbb{R}^{N+1},
\end{equation}
for some $C>0.$ Now observe that taking $w=E_s (u),$ 
\begin{multline}\label{contao}
\dive (y^{1-2s} \nabla w) \left\langle z , \nabla w \right\rangle \xi _n = \\
 \dive \left[y^{1-2s} \xi _n \left( \left\langle z , \nabla w \right\rangle \nabla w - \frac{|\nabla w|^2}{2} z\right)\right]  + \frac{N-2s}{2} y^{1-2s} |\nabla w|^2 \xi _n \\ + y^{1-2s} \frac{|\nabla w|^2}{2} \left\langle z , \nabla \xi _n  \right\rangle - y^{1-2s} \left\langle \nabla w , z \right\rangle\left\langle \nabla w , \nabla \xi _n \right\rangle .
\end{multline}
Given $\delta > 0$ we set
\[
\left\{
\begin{aligned}
& B_{n,\delta}= \{ z=(x,y) \in \mathbb{R}^{N+1} _+  :|z|^2<2n^2, \  y > \delta \} , & \\
& F^1_{n,\delta}= \{ z=(x,y) \in \mathbb{R}^{N+1} _+  :|z|^2<2n^2, \  y = \delta \} , & \\
& F^2_{n,\delta}= \{ z=(x,y) \in \mathbb{R}^{N+1} _+  :|x|^2 +y^2 = 2n^2, \  y > \delta \} . &
\end{aligned}
\right.
\]
Hence $\partial B _{n, \delta } =  F^1 _{n,\delta } \cup  F^2 _{n, \delta} .$ Let $\eta(z)=(0, \ldots,-1)$ be the unit outward normal vector of $B_{n,\delta}$ on $F^1_{n,\delta},$  since $\xi _n = 0$ on $ F^2 _{n, \delta},$ by condition \eqref{sol_div}, identity \eqref{contao} and the Divergence Theorem we get
\begin{align*}
0&=\int _{B_{n , \delta }} \dive (y^{1-2s} \nabla w) \left\langle z , \nabla w \right\rangle \xi _n \dxdy \\
&=\int _{F^1_{n,\delta}} y^{1-2s} \xi _n \left(\left\langle  z, \nabla w \right\rangle \left\langle  \nabla w , \eta \right\rangle   - \frac{|\nabla w|^2}{2} \left\langle   z, \eta  \right\rangle  \right)\dxdy + \theta _{n, \delta}\\
&=\int _{F^1_{n,\delta}} \xi _n \left\langle  x, \nabla _x w \right\rangle (-y ^{1-2s} w_y) \dx - \int _{F^1_{n,\delta}} y ^{1-2s} \xi _n |w_y| ^2 y \dx + \int _{F^1_{n,\delta}} y ^{1-2s} \xi _n \frac{|\nabla w|^2}{2} y \dx + \theta _{n, \delta}\\
&=I^1 _{n, \delta} + I^2 _{n, \delta} + I^3 _{n, \delta} + \theta _{n, \delta},
\end{align*}
where
\begin{equation*}
\theta _{n, \delta} = \int _{B_{n , \delta }} \frac{N-2s}{2} y^{1-2s} |\nabla w|^2 \xi _n + y^{1-2s} \frac{|\nabla w|^2}{2} \left\langle z , \nabla \xi _n  \right\rangle - y^{1-2s} \left\langle \nabla w , z \right\rangle\left\langle \nabla w , \nabla \xi _n \right\rangle \dxdy.
\end{equation*}
Using the same arguments as in \cite[proof of Theorem 3.7]{moustapha} we deduce that there exists a sequence $\delta _k \rightarrow 0$ such that
\begin{equation*}
I^2 _{n, \delta _k} + I^3 _{n, \delta _k} \rightarrow 0,\ \text{as } k \rightarrow \infty.
\end{equation*}
Some computations leads to
\begin{equation*}
\xi _n (x,0) \left\langle x , \nabla u \right\rangle f(u) = \dive (\xi _n (x,0) F(u)x)- F(u) \left\langle \nabla \xi _n (x,0) , x\right\rangle - \xi _n (x,0) F(u)N .
\end{equation*}
Setting 
$$
B^N _{\sqrt{2} n} = \left\lbrace  (x,y) \in \mathbb{R}^{N+1} : |x|^2 \leq R^2, \ y=0 \right\rbrace,
$$ 
by condition \eqref{sol_div}, the Divergence Theorem and the fact that $w$ satisfy \eqref{regularity}, we have
\begin{align*}
\lim_{k \rightarrow \infty} I^1_{n, \delta _k} &= \kappa _s \int _{B^N _{\sqrt{2} n}} \xi _n (x,0) \left\langle x , \nabla u \right\rangle f(u) \dx \\
&=\kappa _s\int _{B^N _{\sqrt{2} n}} \dive ( \xi _n (x,0) F(u)x ) ) - F(u) \left\langle \nabla \xi _n (x,0) , x\right\rangle - \xi _n (x,0) F(u)N \dx \\
&=-N \kappa _s \int _{B^N _{\sqrt{2} n} } \xi _n (x,0) F(u) dx - \kappa _s \int _{B^N _{\sqrt{2} n} } F(u) \left\langle \nabla \xi _n (x,0),x\right\rangle \dx. 
\end{align*}
Summing up, we have
\begin{align*}
0&=\lim_{k \rightarrow \infty} (I^1 _{n, \delta_k} + I^2 _{n, \delta_k} + I^3 _{n, \delta_k} + \theta _{n, \delta_k} )\\
&=-N \kappa _s \int _{B^N _{\sqrt{2} n} } \xi _n F(u) \dx - \kappa _s \int _{B^N _{\sqrt{2} n} } F(u) \left\langle \nabla \xi _n,x\right\rangle \dx\\
& \quad + \int _{B^+ _{\sqrt{2} n} }\frac{N-2s}{2} y^{1-2s} |\nabla w|^2 \xi _n + y^{1-2s} \frac{|\nabla w|^2}{2} \left\langle z , \nabla \xi _n \right\rangle - y^{1-2s} \left\langle \nabla w , z \right\rangle\left\langle \nabla w , \nabla \xi _n  \right\rangle \dxdy.
\end{align*}
Consequently taking $n \rightarrow \infty$ and using conditions \eqref{xin}, we conclude
\begin{equation}
\frac{N-2s}{2} \int _{\mathbb{R}^{N+1}_+} y ^{1-2s} |\nabla w| ^2 \dxdy = N \kappa _s \int _{\mathbb{R}^N} F(u) \dx,
\end{equation}
which together with condition \eqref{sol_div} implies \eqref{pohozaev}, and the proof is complete.
\end{proof}
\subsection{D-weak convergence and dislocation spaces}
As already mentioned, to achieve the decomposition described in Theorem \ref{teo_tinta_frac}, we follow the abstract approach of $D$-weak convergence and dislocation spaces developed in \cite{tintabook}. For the convenience of the reader we state the basic concepts without proofs, thus making our exposition self-contained. In this subsection $H$ denotes a separable infinite-dimensional Hilbert space.
\begin{definitionletter}\cite[Definition 3.1]{tintabook}
Let $D$ be a set of bounded linear operators such that for every $g \in D,$ $\inf _{u \in H, \| u \| = 1} \| gu \| > 0.$ We will say that the sequence $(u_k) \subset H$ converges to $u$ $D$-weakly in $H$, which we will denote as
\begin{equation*}
u_k \stackrel{D}{\rightharpoonup} u,\mbox{ in }H,
\end{equation*}
if for any sequence $(g_k) \subset D,$
\begin{equation*}\label{d3.1}
(g_{k}^{\ast}g_{k})^{-1}g_{k}^{\ast}(u_{k}-u)\rightharpoonup 0 \mbox{ in H.}
\end{equation*}
\end{definitionletter}
Let $(g_k)$ be a sequence of bounded linear operators in $H.$ It is commonly used in \cite{tintabook} the notation $g_k \rightharpoonup 0$ to indicate that $g_k u \rightharpoonup 0$ in $H$ for all $u \in H.$
\begin{definitionletter}\cite[Definition 3.2]{tintabook}
A set $D$ of bounded linear operators on $H$ is a set of dislocations if
\begin{eqnarray*}\label{d3.3}
0<\delta:=\inf_{{g\in D, \|u\|=1}}\|gu\|^{2}\leq \sup_{{g\in D, \|u\|=1}}^{}\|gu\|^{2}<\infty,\\
(u_k)\subset H, \ (g_k) \subset D , \  u_k \rightharpoonup  0 \mbox{ in }H\Rightarrow  g_{k}^\ast g_ku_k\rightharpoonup 0 \mbox{ in }H, \label{d3.4}
\end{eqnarray*}
and, whenever $(u_k) \subset H$ and $(g_k) , (h_k) \subset D$,
\begin{equation*}\label{d3.5}
h^{\ast} _k g_k \not \rightharpoonup 0, \ (g_{k}^{\ast}g_{k})^{-1}g^\ast_ku_k\rightharpoonup 0 \mbox{ in } H\Rightarrow (h_{k}^{\ast}h_{k})^{-1}h^\ast_{k} u_{k}\rightharpoonup 0 \mbox{ in } H.
\end{equation*}
The pair $(H,D)$ is called a dislocation space.
\end{definitionletter}
The next result give a sufficient condition to establish if a pair $(H,D)$ is a dislocation space.
\begin{propositionletter}\cite[Proposition~3.1]{tintabook}\label{prop3.1}
Let $D$ be a group (under the operator multiplication) of unitary operators 
$g :H \rightarrow H $, that is, $g^\ast = g^{-1}.$
 If
\begin{equation*}\label{3.7}
g_k \not \rightharpoonup 0 \mbox{ in }H,\ g_k \in D \Rightarrow g_k u  \mbox{ has a convergent subsequence, for all }u \in H,
\end{equation*}
then $(H,D)$ is dislocation space.
\end{propositionletter}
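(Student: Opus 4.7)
The plan is to verify, one by one, the three conditions in the definition of a set of dislocations, noting that unitarity reduces two of them to triviality and concentrates all the difficulty in the third.

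For the first condition, since every $g \in D$ is unitary we have $\|gu\| = \|u\|$ for all $u \in H,$ so the infimum and supremum in the definition are both equal to $1,$ giving $\delta = 1.$ For the second condition, unitarity again gives $g_k^{\ast} g_k = I,$ hence $g_k^{\ast} g_k u_k = u_k \rightharpoonup 0$ whenever $u_k \rightharpoonup 0$ in $H.$ Using $g_k^{\ast}g_k=I$ and $h_k^{\ast}h_k=I,$ the third condition reduces to the implication:
\[
\quad h_k^{\ast}g_k \not\rightharpoonup 0,\quad g_k^{\ast}u_k \rightharpoonup 0 \quad \Longrightarrow \quad h_k^{\ast}u_k \rightharpoonup 0.
\]
This is the substantive content of the proposition, and to prove it I would first observe that $h_k^{\ast}g_k \in D$ because $D$ is a group, and then apply the compactness hypothesis to the sequence $(h_k^{\ast}g_k).$

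The key step is then to upgrade the pointwise statement ``$(h_k^{\ast}g_k)v$ has a convergent subsequence for every $v \in H$'' into a strong-operator convergence along a single subsequence. For this I would fix a countable dense set $\{v_n\} \subset H$ and run a diagonal argument: successively extract subsequences so that $(h_k^{\ast}g_k)v_n$ converges for each $n,$ and then use the uniform bound $\|h_k^{\ast}g_k\| = 1$ together with density to conclude that, along the diagonal subsequence, $h_k^{\ast}g_k v \to Tv$ for every $v \in H,$ where $T$ is a bounded linear operator with $\|Tv\| = \|v\|.$ Applying the same argument to the sequence $g_k^{\ast}h_k = (h_k^{\ast}g_k)^{\ast} \in D$ (which I would show is also $\not\rightharpoonup 0,$ by using the existence of $v$ with $(h_k^{\ast}g_k)v \to y \neq 0$ and the identity $(g_k^{\ast}h_k)(h_k^{\ast}g_k)v = v$), I would extract a further subsequence along which $g_k^{\ast}h_k \to S$ strongly. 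Passing to the limit in the identities $(g_k^{\ast}h_k)(h_k^{\ast}g_k) = I = (h_k^{\ast}g_k)(g_k^{\ast}h_k)$ then gives $ST = TS = I,$ so that $T$ is unitary with $T^{-1} = T^{\ast} = S.$

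With this machinery in place, I would test weak convergence of $h_k^{\ast}u_k$ against an arbitrary $v \in H$ via
\[
\langle h_k^{\ast}u_k,v\rangle = \langle (h_k^{\ast}g_k)(g_k^{\ast}u_k),v\rangle = \langle g_k^{\ast}u_k,\, g_k^{\ast}h_k v\rangle,
\]
split the right-hand side as $\langle g_k^{\ast}u_k,T^{\ast}v\rangle + \langle g_k^{\ast}u_k,\,g_k^{\ast}h_k v - T^{\ast}v\rangle,$ and conclude: the first term tends to $0$ by $g_k^{\ast}u_k \rightharpoonup 0,$ and the second by Cauchy--Schwarz since $\|g_k^{\ast}u_k\| = \|u_k\|$ is bounded while $g_k^{\ast}h_k v - T^{\ast}v \to 0$ strongly. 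To recover the conclusion for the original sequence rather than only along the extracted subsequence, I would argue by contradiction: if $h_k^{\ast}u_k \not\rightharpoonup 0,$ then some subsequence and some $v$ satisfy $\langle h_k^{\ast}u_k,v\rangle \to c \neq 0,$ but the argument just given, applied along that subsequence, forces $c = 0.$ The main obstacle is precisely this diagonal extraction and the verification that $T$ is unitary; once that is settled, the rest is a routine splitting argument.
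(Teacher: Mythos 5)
Your reduction is exactly right: unitarity trivializes the first two conditions of the dislocation definition and collapses the third to the implication $h_k^{\ast} g_k \not\rightharpoonup 0,\ g_k^{\ast}u_k \rightharpoonup 0 \Rightarrow h_k^{\ast}u_k \rightharpoonup 0$; the diagonalization producing $h_k^{\ast}g_k \to T$ strongly along a subsequence, the adjoint trick showing $g_k^{\ast}h_k \not\rightharpoonup 0$, a further extraction giving $g_k^{\ast}h_k \to S$ strongly, the identity $ST=TS=I$ forcing $T$ to be unitary with $T^{\ast}=S$, and the split of $\langle h_k^{\ast}u_k,v\rangle$ all go through (boundedness of $(u_k)$ is available because $g_k^{\ast}u_k\rightharpoonup 0$ implies $\|u_k\|=\|g_k^{\ast}u_k\|$ is bounded). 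The gap is in the final step: when you pass to a subsequence with $\langle h_{k_j}^{\ast}u_{k_j},v\rangle\to c\neq 0$ and say ``apply the argument just given along that subsequence,'' you need $h_{k_j}^{\ast}g_{k_j}\not\rightharpoonup 0$ along \emph{that} subsequence in order to re-invoke the compactness hypothesis, and this is not inherited from $h_k^{\ast}g_k\not\rightharpoonup 0$ for the full sequence under the paper's stated reading of $\not\rightharpoonup 0$ (negation of ``$g_k u\rightharpoonup 0$ for every $u$''): a subsequence of a $\not\rightharpoonup 0$ sequence can perfectly well be $\rightharpoonup 0$.

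This is a genuine obstacle, not a technicality. On $H=\ell^2(\mathbb{Z})$ with $D$ the group of integer shifts $T_a e_n=e_{n+a}$, take $g_k=I$, $h_k=I$ for even $k$ and $h_k=T_k$ for odd $k$, $u_k=0$ for even $k$ and $u_k=e_k$ for odd $k$. Then $h_k^{\ast}g_k\not\rightharpoonup 0$ (look at the even indices), $g_k^{\ast}u_k=u_k\rightharpoonup 0$, and yet $h_k^{\ast}u_k$ equals $e_0$ at every odd index, so $h_k^{\ast}u_k\not\rightharpoonup 0$. Thus with the literal reading of $\not\rightharpoonup 0$ the proposition fails on the prototypical dislocation space; the resolution is that $g_k\not\rightharpoonup 0$ must be read in the stronger sense that no subsequence of $(g_k)$ converges operator-weakly to zero. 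Under that convention the hypothesis does pass to arbitrary subsequences, my example is ruled out, and your contradiction argument closes. You should state this convention explicitly and use it precisely at the point where you re-apply the compactness hypothesis to the extracted subsequence.
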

The next result provides a profile decomposition for bounded sequence in a suitable abstract Hilbert space, it is crucial to obtain the decomposition in Theorem \eqref{teo_tinta_frac}, and it can be seen as a generalization of the celebrated Banach-Alaoglu-Bourbaki Theorem.

\begin{theoremletter}\cite[Theorem~3.1]{tintabook} \label{teo_tinta}
	Let $ (H,D) $ be a dislocation space. If $ ( u_k) \subset H $ is a bounded sequence, then there exists a set $ \mathbb{N}_0 \subset \mathbb{N}, $ and sequences $ (w^{(n)}) _{n \in \mathbb{N}_0} \subset H, \; \; (g_k ^{(n)}) _{k \in \mathbb{N} } \subset D,\; \; \ g_k ^{(1)}=id,$ with $ n\in \mathbb{N}_0 $, such that for a subsequence of $ (u_k) $,
	\begin{flalign*}
	\qquad \qquad \qquad \qquad \qquad &\left(g_{k}^{(n)^\ast} g_{k}^{(n)}\right)^{-1}g_{k}^{(n)^\ast}
	u_k\rightharpoonup w^{(n)} \mbox { in H,} &\\ 
	\qquad \qquad \qquad \qquad \qquad &g_{k}^{(n)^\ast}g_{k}^{(m)}\rightharpoonup 0
	\mbox{ for }n\neq m.&\\ 
	\qquad \qquad \qquad \qquad \qquad &\sum_{n\in\mathbb{N}_0}\|w^{(n)}\|^{2}\leq\delta^{-1}\limsup _k\|u_k\|^2.&\\ 
	\qquad \qquad \qquad \qquad \qquad &u_k-\sum_{n\in\mathbb{N}_0}g_{k}^{(n)}w^{(n)}\stackrel{D}{\rightharpoonup}0, &
	\end{flalign*}
	where the series $\sum_{n\in\mathbb{N}_0}g_{k}^{(n)}w^{(n)}$ converges uniformly in $k.$
\end{theoremletter}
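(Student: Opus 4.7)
\textbf{Proof proposal for Theorem \cite[Theorem 3.1]{tintabook}.}
The plan is a greedy inductive extraction of profiles, followed by a ``mass quantization'' argument that forces termination in the $D$-weak sense. Define, for any bounded sequence $(v_k)\subset H$, the concentration functional
\begin{equation*}
\sigma((v_k)):=\sup\Bigl\{\|w\|:\exists (g_k)\subset D\text{ and a subsequence with }(g_k^{\ast}g_k)^{-1}g_k^{\ast}v_k\rightharpoonup w\text{ in }H\Bigr\}.
\end{equation*}
Note $\sigma((v_k))\le\delta^{-1/2}\limsup_k\|v_k\|$ since for any candidate $(g_k),w$ one has
$\|w\|^2\le\liminf_k\langle(g_k^{\ast}g_k)^{-1}g_k^{\ast}v_k,w\rangle\le\delta^{-1}\|v_k\|\|w\|$ by $\|gu\|^2\ge\delta\|u\|^2$ and Cauchy--Schwarz.

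Start with $g_k^{(1)}=\mathrm{id}$ and, after passing to a subsequence, let $w^{(1)}$ be the weak limit of $u_k$. Inductively, having constructed $g_k^{(1)},\dots,g_k^{(n)}$ and $w^{(1)},\dots,w^{(n)}$, set $r_k^{(n)}:=u_k-\sum_{j=1}^{n}g_k^{(j)}w^{(j)}$. If $r_k^{(n)}\stackrel{D}{\rightharpoonup}0$, terminate and set $\mathbb{N}_0=\{1,\dots,n\}$; otherwise $\sigma((r_k^{(n)}))>0$, so after a diagonal subsequence choose $(g_k^{(n+1)})\subset D$ and $w^{(n+1)}$ with $(g_k^{(n+1)\ast}g_k^{(n+1)})^{-1}g_k^{(n+1)\ast}r_k^{(n)}\rightharpoonup w^{(n+1)}$ and $\|w^{(n+1)}\|\ge\tfrac12\sigma((r_k^{(n)}))$. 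The assertion $(g_k^{(n)\ast}g_k^{(n)})^{-1}g_k^{(n)\ast}u_k\rightharpoonup w^{(n)}$ follows because the sum $\sum_{j<n}g_k^{(j)}w^{(j)}$ is absorbed through the mutual asymptotic orthogonality $g_k^{(n)\ast}g_k^{(j)}\rightharpoonup 0$, which I will establish next.

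The mutual orthogonality is the heart of the argument and uses crucially the third axiom of a dislocation space. Suppose towards contradiction that for some $m<n$ one has $g_k^{(n)\ast}g_k^{(m)}\not\rightharpoonup 0$. At stage $n$ we extracted $w^{(n)}$ from the remainder $r_k^{(n-1)}$, which by induction satisfies $(g_k^{(m)\ast}g_k^{(m)})^{-1}g_k^{(m)\ast}r_k^{(n-1)}\rightharpoonup 0$ in $H$. Applying the third dislocation axiom with $(g_k,h_k)=(g_k^{(m)},g_k^{(n)})$ forces $(g_k^{(n)\ast}g_k^{(n)})^{-1}g_k^{(n)\ast}r_k^{(n-1)}\rightharpoonup 0$, contradicting $w^{(n)}\neq 0$. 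Hence $g_k^{(n)\ast}g_k^{(m)}\rightharpoonup 0$ for $n\ne m$.

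From the orthogonality and the uniform bound $\sup_{g\in D}\|g\|<\infty$ of the second axiom, the cross terms $\langle g_k^{(n)}w^{(n)},g_k^{(m)}w^{(m)}\rangle=\langle w^{(n)},g_k^{(n)\ast}g_k^{(m)}w^{(m)}\rangle\to 0$. Combined with $\|g_k^{(n)}w^{(n)}\|^2\ge\delta\|w^{(n)}\|^2$ and the identity
\begin{equation*}
\Bigl\|u_k-\sum_{n=1}^{M}g_k^{(n)}w^{(n)}\Bigr\|^2=\|u_k\|^2-2\,\mathrm{Re}\sum_{n=1}^{M}\langle u_k,g_k^{(n)}w^{(n)}\rangle+\Bigl\|\sum_{n=1}^{M}g_k^{(n)}w^{(n)}\Bigr\|^2,
\end{equation*}
where $\langle u_k,g_k^{(n)}w^{(n)}\rangle\to\|g_k^{(n)}w^{(n)}\|^2+o(1)$ (again by asymptotic orthogonality applied to the remainder expansion), one passes to the limit to obtain $\delta\sum_{n=1}^{M}\|w^{(n)}\|^2\le\limsup_k\|u_k\|^2$ for every finite $M$, yielding the stated bound. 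In particular $\|w^{(n)}\|\to 0$, so $\sigma((r_k^{(n)}))\to 0$.

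The main obstacle is to combine the greedy extraction with uniform-in-$k$ convergence of the series and the terminal $D$-weak convergence. For the series, note that for any $M<M'$,
\begin{equation*}
\Bigl\|\sum_{n=M}^{M'}g_k^{(n)}w^{(n)}\Bigr\|^2\le \Bigl(\sup_{g\in D}\|g\|^2\Bigr)\sum_{n=M}^{M'}\|w^{(n)}\|^2+\varepsilon_k(M,M'),
\end{equation*}
where $\varepsilon_k(M,M')\to 0$ as $k\to\infty$ for each fixed $M,M'$; a standard diagonal extraction with respect to $k$ lets us absorb $\varepsilon_k$ and deduce uniform-in-$k$ convergence from $\sum\|w^{(n)}\|^2<\infty$. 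Finally, if the extraction does not terminate, the diagonal subsequence $r_k^{(k)}$ satisfies $\sigma((r_k^{(k)}))=0$, which by the very definition of $\sigma$ means $r_k^{(k)}\stackrel{D}{\rightharpoonup}0$, completing the proof. The delicate point throughout is the synchronization of the countably many subsequences used in the inductive step, resolved by a Cantor-type diagonalization together with the uniform tail estimate above.
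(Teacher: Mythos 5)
The paper never proves Theorem \ref{teo_tinta}: it is imported verbatim from \cite[Theorem~3.1]{tintabook} and used as a black box, so the only meaningful comparison is with the proof in that reference. Your outline follows exactly that strategy (greedy extraction of dislocated weak limits of near-maximal norm, asymptotic orthogonality of the dislocation sequences via the transfer axiom, a Bessel-type inequality, diagonalization and a vanishing concentration functional to terminate), so the skeleton is the right one.

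However, as written your argument silently treats the dislocations as if they were unitary, and this leaves a genuine gap precisely at the steps where the axiom ``$u_k\rightharpoonup 0,\ (g_k)\subset D\ \Rightarrow\ g_k^{\ast}g_k u_k\rightharpoonup 0$'' of the definition of a dislocation space is indispensable; you never invoke it. Concretely: (1) to pass from the extraction applied to $r_k^{(n-1)}$ to the assertion $(g_k^{(n)\ast}g_k^{(n)})^{-1}g_k^{(n)\ast}u_k\rightharpoonup w^{(n)}$ you must show $(g_k^{(n)\ast}g_k^{(n)})^{-1}g_k^{(n)\ast}g_k^{(j)}w^{(j)}\rightharpoonup 0$ for $j<n$; the orthogonality only gives $x_k:=g_k^{(n)\ast}g_k^{(j)}w^{(j)}\rightharpoonup 0$, and applying the $k$-dependent, merely uniformly bounded operators $(g_k^{(n)\ast}g_k^{(n)})^{-1}$ to a weakly null sequence does not in general preserve weak nullity (rank-one operators in $\ell^2$ give counterexamples). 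The repair is exactly the unused axiom: if $y$ is a weak cluster point of $(g_k^{(n)\ast}g_k^{(n)})^{-1}x_k$, that axiom yields $g_k^{(n)\ast}g_k^{(n)}y\rightharpoonup 0$, hence $\delta\|y\|^2\leq\langle g_k^{(n)\ast}g_k^{(n)}y,y\rangle\to 0$ and $y=0$. (2) The same issue undermines, as justified, the induction hypothesis $(g_k^{(m)\ast}g_k^{(m)})^{-1}g_k^{(m)\ast}r_k^{(n-1)}\rightharpoonup 0$ on which your orthogonality contradiction rests, so the induction must be run simultaneously on extraction, orthogonality and this absorption property to avoid circularity. (3) Your identity $\langle u_k,g_k^{(n)}w^{(n)}\rangle=\|g_k^{(n)}w^{(n)}\|^2+o(1)$ again needs $g_k^{(n)\ast}g_k^{(n)}(z_k-w^{(n)})\rightharpoonup 0$ for $z_k=(g_k^{(n)\ast}g_k^{(n)})^{-1}g_k^{(n)\ast}u_k$, i.e.\ the same axiom. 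In the unitary-group situation actually used in this paper ($D_{\mathbb{Z}^N}$, $g^{\ast}g=\mathrm{id}$, $\delta=1$) all three points trivialize and your computation is essentially complete, up to the harmless slip $\delta^{-1}$ versus $\delta^{-1/2}$ in the bound for $\sigma$ and the admittedly sketchy bookkeeping for the uniform-in-$k$ convergence of the series and for $\sigma\bigl((r_k^{(k)})\bigr)=0$; but as a proof of the theorem as stated, for general dislocation sets, these steps are missing and require the second axiom.
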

\section{Proof of Theorem~\ref{teo_tinta_frac}}\label{assumptions}
We follow the same arguments used in \cite[Section 5]{palatucci}. First, we establish some notation.

Given $\gamma >1, $ let
\begin{equation}\label{dilat_group}
\delta_{\mathbb{R}} := \left\lbrace \delta _j : \mathcal{D}^{s,2} (\mathbb{R}^N) \rightarrow \mathcal{D}^{s,2} (\mathbb{R}^N) : \delta _j u(x) = \gamma ^{\frac{N-2s}{2} j} u (\gamma ^j x ),\ j \in \mathbb{R} \right\rbrace,
\end{equation}
and
\begin{equation*}
T_{\mathbb{R}^N } := \left\lbrace g_y : \mathcal{D}^{s,2} (\mathbb{R}^N) \rightarrow \mathcal{D}^{s,2} (\mathbb{R}^N) : g_y u(x) = u(x-y), \ y \in \mathbb{R}^N \right\rbrace,
\end{equation*}
the groups of operators on $\mathcal{D}^{s,2} (\mathbb{R}^N)$ induced by dilations and translations on $\mathbb{R}^N,$ respectively. It is easy to see that $T_{\mathbb{R}^N } $ and $\delta_{\mathbb{R}}$ are groups of unitary operators in $\mathcal{D}^{s,2} (\mathbb{R}^N),$ by using the following identities
\begin{equation}\label{frac_id}
\left\{
\begin{aligned}
& (-\Delta )^{s/2} \left( u (\cdot - y) \right) = \left( (-\Delta )^{s/2} u \right)(\cdot - y),&\\
& (-\Delta )^{s/2}\left( u(\tau \cdot) \right)=\tau ^{s} \left( (-\Delta )^{s/2}u \right) (\tau \cdot),&
\end{aligned}
\right.
\end{equation}
$u \in \mathcal{D}^{s,2} (\mathbb{R}^N),\ y \in \mathbb{R}^N$ and $\tau>0.$ Thus $D_{\mathbb{R}^N }$ (defined in \eqref{D_erren}) consists of compositions of the elements of $T_{\mathbb{R}^N }$ with $\delta_{\mathbb{R}},$ i.e., $d_{y,j} = \delta _j \circ g_{\gamma ^j y }.$ By checking that $d_{y,j} \circ d_{z,l} = d_{y+ \gamma ^{-j}z , j + l}$ and $(d_{y,j})^{-1} = d _{- \gamma ^j y, -j},$ we see that $D_{\mathbb{R} ^N}$ is a group of unitary operators in $\mathcal{D}^{s,2} (\mathbb{R}^N).$

The following results describe how the elements of $D_{\mathbb{R} ^N}$ acts in  $\mathcal{D}^{s,2} (\mathbb{R}^N).$ They are similar to the ones in \cite[Section 5]{palatucci} and can be proved using analogous arguments.
\begin{lemma}\label{op_conv}
Let $(y_k, j_k) \subset \mathbb{R}^N \times \mathbb{R},$ such that $(y_k, j_k) \rightarrow (y, j).$ Then $d_{y_k, j_k} u \rightarrow d_{y, j} u,$ for all $u \in \mathcal{D}^{s,2} (\mathbb{R}^N).$
\end{lemma}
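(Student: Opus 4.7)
The plan is to combine the unitarity of the operators $d_{y,j}$ with a density/$\varepsilon$-$3$ reduction. Since $D_{\mathbb{R}^N}$ is a group of unitary operators on $\mathcal{D}^{s,2}(\mathbb{R}^N)$ (as follows from the identities in \eqref{frac_id}), the operator norm $\|d_{y_k,j_k}\|$ equals $1$ for every $k$. Consequently, for any auxiliary $\varphi\in\mathscr{S}_{0}$,
\begin{equation*}
\|d_{y_k,j_k}u-d_{y,j}u\|\le 2\|u-\varphi\|+\|d_{y_k,j_k}\varphi-d_{y,j}\varphi\|.
\end{equation*}
Since $\mathscr{S}_{0}$ is dense in $\mathcal{D}^{s,2}(\mathbb{R}^N)$ (a standard consequence of the Fourier characterization recalled in Section \ref{Preliminaries}), the problem reduces to establishing $d_{y_k,j_k}\varphi\to d_{y,j}\varphi$ in $\mathcal{D}^{s,2}(\mathbb{R}^N)$ for each fixed $\varphi\in\mathscr{S}_{0}$.

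For such $\varphi$, I would pass to Fourier space. A direct computation using the transformation laws for translations and dilations gives
\begin{equation*}
\mathscr{F}(d_{y,j}\varphi)(\xi)=\gamma^{-\frac{N+2s}{2}j}\,e^{-i\xi\cdot y}\,\mathscr{F}\varphi(\gamma^{-j}\xi),
\end{equation*}
and therefore, by the very definition of the norm in $\mathcal{D}^{s,2}(\mathbb{R}^N)$,
\begin{equation*}
\|d_{y_k,j_k}\varphi-d_{y,j}\varphi\|^{2}=\int_{\mathbb{R}^N}|\xi|^{2s}\bigl|\gamma^{-\frac{N+2s}{2}j_k}e^{-i\xi\cdot y_k}\mathscr{F}\varphi(\gamma^{-j_k}\xi)-\gamma^{-\frac{N+2s}{2}j}e^{-i\xi\cdot y}\mathscr{F}\varphi(\gamma^{-j}\xi)\bigr|^{2}\dxi.
\end{equation*}
Because $\mathscr{F}\varphi\in C_{0}^{\infty}(\mathbb{R}^N\setminus\{0\})$ is supported in some annulus $A$ and $(j_k)$ is bounded (since $j_k\to j$), the two integrands have support contained in a common compact set $K\subset\mathbb{R}^N\setminus\{0\}$ for all $k$ large. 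On $K$ the weight $|\xi|^{2s}$ is bounded, and the integrand converges to zero pointwise while staying uniformly bounded by the continuity of $\mathscr{F}\varphi$ and of $(y,j)\mapsto (\gamma^{-\frac{N+2s}{2}j},e^{-i\xi\cdot y})$. Dominated convergence then yields $\|d_{y_k,j_k}\varphi-d_{y,j}\varphi\|\to 0$.

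The main technical point, hence the only genuine obstacle, is the uniform control of the Fourier supports near $\xi=0$ (where the weight $|\xi|^{2s}$ could otherwise cause trouble). This is precisely why I would work with $\mathscr{S}_{0}$ rather than the more naive choice $C_{0}^{\infty}(\mathbb{R}^N)$: the requirement $\mathscr{F}\varphi\in C_{0}^{\infty}(\mathbb{R}^N\setminus\{0\})$ automatically keeps everything away from the origin under the dilations $\gamma^{-j_k}$, and makes the dominated convergence argument entirely straightforward. Everything else in the proof is mechanical.
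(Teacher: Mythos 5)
Your proof is correct. The paper itself does not spell out an argument for Lemma~\ref{op_conv}; it merely asserts that it follows by the same reasoning as the analogous lemmas in \cite[Section 5]{palatucci}. Your density-plus-unitarity reduction to $\mathscr{S}_0$, followed by the explicit Fourier computation and dominated convergence on a compact annulus, is a clean and complete rendering of exactly that standard strategy (the Fourier-side description of $d_{y,j}$ is the natural tool given that the $\mathcal{D}^{s,2}$ norm is defined through $\mathscr{F}$), so it matches the intended approach rather than offering a genuinely different route.
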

\begin{lemma}\label{lemma5.1}
Let $u \in \mathcal{D}^{s,2} (\mathbb{R}^N) \setminus \{ 0 \}.$ The sequence $(d_{y_k,j_k}u ),$ with $(y_k,j_k) \subset \mathbb{R}^N \times \mathbb{R},$ converges weakly to zero if and only if $|j_k| + |y_k| \rightarrow \infty.$
\end{lemma}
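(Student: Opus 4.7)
The plan is to prove the two directions separately, exploiting that every $d_{y,j}$ is a unitary operator on $\mathcal{D}^{s,2}(\mathbb{R}^N)$ (so in particular $\|d_{y_k,j_k} u\| = \|u\|$ and the sequence is norm-bounded). For the necessity, I would argue by contraposition: if $|j_k|+|y_k|$ fails to tend to infinity, then there is a bounded subsequence $(y_{k_\ell},j_{k_\ell}) \to (y_*, j_*) \in \mathbb{R}^N \times \mathbb{R}$. Lemma~\ref{op_conv} then yields $d_{y_{k_\ell}, j_{k_\ell}} u \to d_{y_*, j_*} u$ strongly in $\mathcal{D}^{s,2}(\mathbb{R}^N)$, and unitarity gives $\|d_{y_*, j_*} u\| = \|u\| > 0$. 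Hence the subsequence converges strongly to a nonzero limit and cannot converge weakly to $0$, contradicting the assumed weak convergence.

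For the sufficiency, since the sequence is uniformly bounded, it suffices to check $(d_{y_k,j_k} u, v) \to 0$ for $u,v$ in the dense subspace $\mathscr{S}_0$. Combining the two identities in \eqref{frac_id}, one computes $(-\Delta)^{s/2}(d_{y_k,j_k} u)(x) = \gamma^{Nj_k/2} U(\gamma^{j_k}(x-y_k))$, where $U := (-\Delta)^{s/2} u$. Setting $V := (-\Delta)^{s/2} v$ and changing variables $z = \gamma^{j_k}(x-y_k)$ would then give
\begin{equation*}
(d_{y_k, j_k} u, v) = \gamma^{-Nj_k/2} \int_{\mathbb{R}^N} U(z)\, V\!\left( \gamma^{-j_k} z + y_k \right) \,\mathrm{d}z,
\end{equation*}
where, thanks to $u,v \in \mathscr{S}_0$, both $U$ and $V$ belong to $\mathscr{S} \subset L^1 \cap L^\infty$ with rapid decay. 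I would then split into three cases. If $(j_k)$ is bounded, passing to a subsequence $j_k \to j_*$ forces $|y_k| \to \infty$; for each fixed $z$ one has $|\gamma^{-j_k} z + y_k| \to \infty$, so the rapid decay of $V$ together with dominated convergence (dominated by $\|V\|_\infty |U(z)|$) drives the integral to $0$ while $\gamma^{-Nj_k/2}$ stays bounded. If $j_k \to +\infty$, the integral is controlled uniformly by $\|U\|_1 \|V\|_\infty$ while the prefactor $\gamma^{-Nj_k/2} \to 0$. If $j_k \to -\infty$, the substitution $w = \gamma^{-j_k} z$ rewrites the expression as $\gamma^{Nj_k/2} \int U(\gamma^{j_k} w) V(w+y_k)\,\mathrm{d}w$, bounded by $\gamma^{Nj_k/2} \|U\|_\infty \|V\|_1 \to 0$.

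Finally, a two-step density argument promotes this to the full statement: first, for fixed $u \in \mathscr{S}_0$, approximating an arbitrary $v \in \mathcal{D}^{s,2}(\mathbb{R}^N)$ by $v_\delta \in \mathscr{S}_0$ and using $\|d_{y_k,j_k} u\| = \|u\|$ shows $d_{y_k,j_k} u \rightharpoonup 0$; then the same argument on the first slot, approximating the given $u$ by $u_\epsilon \in \mathscr{S}_0$, extends the conclusion to all $u \in \mathcal{D}^{s,2}(\mathbb{R}^N) \setminus \{0\}$. The only genuinely delicate point I anticipate is case (a), in which the bounded dilation $j_k \to j_*$ does not itself produce smallness and one must instead combine a core/tail split $\{|z| \le R\} \cup \{|z|>R\}$, exploiting the decay of $V$ on the core and the $L^1$ tail of $U$; the remaining cases reduce to direct upper bounds for the prefactor.
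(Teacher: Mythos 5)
Your proof is correct. Note that the paper does not actually prove Lemma~\ref{lemma5.1}: it is stated without proof, with the remark that it and Lemma~\ref{op_conv} ``can be proved using analogous arguments'' to those in \cite[Section~5]{palatucci}, and your route---contraposition via Lemma~\ref{op_conv} plus unitarity for necessity, and density to $\mathscr{S}_0$ plus the change-of-variables identity $(d_{y_k,j_k}u,v)=\gamma^{-Nj_k/2}\int_{\mathbb{R}^N} U(z)\,V(\gamma^{-j_k}z+y_k)\,\mathrm{d}z$ followed by the three-case subsequence analysis on $j_k$ for sufficiency---is exactly the expected argument. The delicacy you flag in case (a) is not really there: along a subsequence where $j_k$ is bounded, $|y_k|\to\infty$ forces $V(\gamma^{-j_k}z+y_k)\to 0$ pointwise, the bound $\|V\|_\infty|U|\in L^1$ dominates, and dominated convergence closes it (the core/tail split you describe is simply a direct proof of this particular instance of that theorem).
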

The idea to prove Theorem \ref{teo_tinta_frac} is to take $D=D_{\mathbb{Z} ^N}$ and apply Theorem \ref{teo_tinta}. The main reason to take  $D=D_{\mathbb{Z} ^N} $ (instead of $D_{\mathbb{R} ^N}$) is described in Sect. \ref{Weak Convergence}: it gives further properties for the weak decomposition. 

Considering the next cocompactness result we are able to prove Theorem \ref{teo_tinta_frac}. As a consequence of it and \cite[Proposition 1]{palatucci}, we see that $D_{\mathbb{R}^N}$--weak convergence is equivalent to the $D_{\mathbb{Z}^N}$--weak convergence in $\mathcal{D}^{s,2}(\mathbb{R}^N),$ for $0<s<1.$
\begin{proposition}\label{cocompact}
Assume that $0<s<\min\{1,N/2\}.$ Let $(u_k)$ be a bounded sequence in $\mathcal{D}^{s,2}(\mathbb{R} ^N).$ Then $u_k \stackrel{D}{\rightharpoonup} 0$ if and only if $u_k \rightarrow 0$ in $L^{2 ^\ast_s}(\mathbb{R}^N).$
\end{proposition}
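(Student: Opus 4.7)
My argument for the easy direction $(\Leftarrow)$ is essentially formal. Each operator $d_{y,j} \in D = D_{\mathbb{Z}^N}$ is an isometry of $\mathcal{D}^{s,2}(\mathbb{R}^N)$ by \eqref{frac_id}, and, by the very choice of the prefactor $\gamma^{\frac{N-2s}{2}j}$, also of $L^{2^\ast_s}(\mathbb{R}^N)$. Hence for any $(g_k)\subset D$, the sequence $g_k^{-1}u_k$ is $\mathcal{D}^{s,2}$-bounded and satisfies $\|g_k^{-1}u_k\|_{L^{2^\ast_s}}=\|u_k\|_{L^{2^\ast_s}}\to 0$. By the continuous embedding $\mathcal{D}^{s,2}(\mathbb{R}^N)\hookrightarrow L^{2^\ast_s}(\mathbb{R}^N)$, any weak $\mathcal{D}^{s,2}$-limit of a subsequence of $g_k^{-1}u_k$ must coincide with its $L^{2^\ast_s}$-strong limit, namely $0$; a standard subsequence argument then gives $g_k^{-1}u_k\rightharpoonup 0$, which is precisely $u_k\stackrel{D}{\rightharpoonup}0$.

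For the forward direction $(\Rightarrow)$, my plan is to follow the strategy announced in the introduction and reduce to the continuous-group cocompactness of \cite[Proposition~1]{palatucci}: it will be enough to show that the hypothesis $u_k\stackrel{D_{\mathbb{Z}^N}}{\rightharpoonup}0$ forces $u_k\stackrel{D_{\mathbb{R}^N}}{\rightharpoonup}0$, after which the desired $u_k\to 0$ in $L^{2^\ast_s}(\mathbb{R}^N)$ follows immediately. Given an arbitrary $(y_k,j_k)\subset\mathbb{R}^N\times\mathbb{R}$, I will write $j_k=m_k+s_k$ with $m_k\in\mathbb{Z}$ and $s_k\in[0,1)$, take $z_k\in\mathbb{Z}^N$ to be the nearest lattice point to $\gamma^{s_k}y_k$, and set $r_k:=\gamma^{s_k}y_k-z_k$, so that $|r_k|\le\sqrt{N}/2$. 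Using the composition law $d_{a,b}d_{c,d}=d_{a+\gamma^{-b}c,\,b+d}$ one then verifies the factorization
\[
d_{y_k,j_k}^{-1}=d_{z_k,m_k}^{-1}\circ d_{\gamma^{-s_k}r_k,\,s_k}^{-1},
\]
in which the right-hand factor has bounded parameters. Passing to a subsequence along which $(\gamma^{-s_k}r_k,s_k)\to(\alpha,\beta)$, Lemma~\ref{op_conv} yields strong pointwise convergence $d_{\gamma^{-s_k}r_k,s_k}\phi\to d_{\alpha,\beta}\phi$ for every test $\phi$. Combining this with the $D_{\mathbb{Z}^N}$-weak convergence of $u_k$ should then give $d_{y_k,j_k}^{-1}u_k\rightharpoonup 0$ in $\mathcal{D}^{s,2}(\mathbb{R}^N)$.

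The hard part, I expect, will be the regime $m_k\to+\infty$ with $y_k$ at a generic (non-lattice) location: while the residual $r_k$ is bounded by construction, the dual pairing involves the translated-dilated functions $d_{z_k,m_k}\phi$ varying with $k$ at a scale at which $\mathbb{Z}^N$ is too coarse to center concentrations exactly, and pointwise operator convergence is no longer sufficient to push the perturbation through. To close this case I plan to use the compact embedding $H^s(Q_0)\hookrightarrow L^q(Q_0)$ on the unit cube $Q_0=[0,1]^N$ for some $q\in(2,2^\ast_s)$, which is available precisely because of the hypothesis $0<s<1$ of the proposition. Combined with $u_k\stackrel{D}{\rightharpoonup}0$, a standard extraction argument shows that $\sup_{(z,j)\in\mathbb{Z}^N\times\mathbb{Z}}\|d_{z,j}^{-1}u_k\|_{L^q(Q_0)}\to 0$, and an interpolation inequality of improved-Sobolev type, controlling $\|u\|_{L^{2^\ast_s}}^{2^\ast_s}$ by $\|u\|_{\mathcal{D}^{s,2}}^2$ times a suitable power of this local $L^q$-supremum, upgrades the local vanishing to the global $L^{2^\ast_s}$-smallness needed to conclude.
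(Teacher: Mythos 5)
The easy direction $(\Leftarrow)$ is correct and is, in substance, the paper's argument: elements of $D$ are simultaneous isometries of $\mathcal{D}^{s,2}(\mathbb{R}^N)$ and of $L^{2^\ast_s}(\mathbb{R}^N)$, so $g_k^{-1}u_k$ stays bounded in $\mathcal{D}^{s,2}$ with vanishing $L^{2^\ast_s}$-norm, forcing every weak sublimit to be zero.

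The forward direction has two genuine gaps, both exactly in the regime you flag as hard, and neither of your two plans closes them. First, the factorization $d_{y_k,j_k}^{-1}=d_{z_k,m_k}^{-1}\circ d_{\gamma^{-s_k}r_k,s_k}^{-1}$ puts the bounded-parameter operator on the \emph{inside}, so $d_{y_k,j_k}^{-1}u_k=d_{z_k,m_k}^{-1}\bigl(d_{\gamma^{-s_k}r_k,s_k}^{-1}u_k\bigr)$ and the hypothesis tells you nothing about the inner term. To push weak-nullity through Lemma~\ref{op_conv} you need the opposite factorization $d_{y_k,j_k}^{-1}=A_k\circ d_{z_k,m_k}^{-1}$ with $A_k$ of bounded parameters; but $A_k=d_{y_k,j_k}^{-1}\circ d_{z_k,m_k}=d_{\gamma^{j_k}(z_k-y_k),\,-s_k}$, whose translation parameter $\gamma^{j_k}(z_k-y_k)$ is unbounded as $j_k\to+\infty$ unless $y_k$ lies within $O(\gamma^{-j_k})$ of $\mathbb{Z}^N$. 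Second, the fallback inequality $\|u\|_{L^{2^\ast_s}}^{2^\ast_s}\lesssim\|u\|_{\mathcal{D}^{s,2}}^{2}\,\bigl(\sup_{(z,j)\in\mathbb{Z}^N\times\mathbb{Z}}\|d_{z,j}^{-1}u\|_{L^q(Q_0)}\bigr)^{\theta}$ is false: take $u=d_{y_0,j_0}\phi$ with $\phi\in C_0^\infty$ fixed, $y_0\notin\mathbb{Z}^N$ at distance $\delta>0$ from the lattice, and $j_0\to+\infty$. The left side and $\|u\|_{\mathcal{D}^{s,2}}$ are scale-invariant constants, yet $d_{z,j}^{-1}u=d_{\gamma^j(y_0-z),\,j_0-j}\phi$ is a bump of scale $\gamma^{-(j_0-j)}$ centered at $\gamma^j(y_0-z)$, and one checks $\sup_{(z,j)}\|d_{z,j}^{-1}u\|_{L^q(Q_0)}\to 0$: when $j_0-j$ stays bounded the center has modulus at least $\gamma^{j}\delta\to\infty$ and the bump misses $Q_0$, while otherwise the prefactor $\gamma^{(j_0-j)(\frac{N-2s}{2}q-N)}$ or the shrinking support kills the $L^q(Q_0)$-mass. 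The underlying geometry is the same as the first gap: the fixed lattice $\mathbb{Z}^N$ does not resolve positions at scale $\gamma^{-j_0}$. The paper avoids this entirely by a different mechanism: truncate $u_k$ by a cutoff $\xi$ to a fixed level band, tile by unit cubes at one dilation scale where the lattice \emph{is} adapted (so the $D$-weak hypothesis together with the compact embedding $H^s\hookrightarrow L^2_{\loca}$ kill the worst local mass), bound the sum of the local fractional Sobolev energies of the truncations by $\|u_k\|^2$, and then run a second decomposition of $\|u_k\|_{L^{2^\ast_s}}^{2^\ast_s}$ over the dyadic \emph{amplitude} levels $\{\gamma^{-\frac{N-2s}{2}j}\le|u_k|<\gamma^{-\frac{N-2s}{2}(j-1)}\}$ via the rescaled cutoffs $\xi_j$. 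It is this amplitude decomposition, not an improved Sobolev bound indexed by a fixed lattice, that makes the argument close.
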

\begin{proof}
Our proof follows the same ideas of \cite[Lemma 5.3]{tintabook}.
Since $C^\infty_0 (\mathbb{R}^N)$ is a dense subset of $\mathcal{D}^{s,2}(\mathbb{R}^N),$ by the continuous embedding of $\mathcal{D}^{s,2}(\mathbb{R}^N)$ in $L^{2_s ^\ast }(\mathbb{R}^N),$ we can assume without loss of generality that the sequence $(u_k)$ belongs to $C^\infty_0 (\mathbb{R}^N).$ Let us suppose first that $u_k \stackrel{D}{\rightharpoonup} 0$. Consider $\xi \in C^\infty _0 (\mathbb{R}, [0,\infty))$ such that
\begin{equation*}
\xi(t)=
\left\{
\begin{aligned}
t,&\quad\text{if }\frac{1}{4} \gamma^{\frac{N-2s}{2}}  \leq t\leq \frac{3}{4}\gamma^{\frac{N-2s}{2}},\\
0,&\quad \text{if }t\leq 1\text{ or }t \geq \gamma^{\frac{N-2s}{2}},
\end{aligned}
\quad\text{and}\quad |\xi'(t)| \leq C,\ \forall t,
\right.
\end{equation*}
where we can assume without loss of generality that $\gamma > 4,$ because we can replace it by $\gamma ^{n_0} >4,$ for integer $n_0$ large enough, if necessary. Notice that there exists a positive constant $C$ such that
\begin{equation}\label{cutoff}
\left\{
\begin{aligned}
& |\xi(t)|^{2_s ^\ast} \leq C t^2,\\
&|\xi(t)|^{2} \leq C |t|^{2_s ^\ast},
\end{aligned}
\right.
\quad \forall t.
\end{equation}
Given any sequence $(j_k)$ in $\mathbb{Z},$ denote
\begin{equation*}
v_k (x) = \gamma^{\frac{N-2s}{2}j_k} u_k (\gamma^{j_k} x).
\end{equation*}
Let $Q_z = (0,1)^N + z,$ with $z \in \mathbb{Z}^N.$ By the Sobolev embedding \eqref{comp_loc}, for any $z \in \mathbb{Z}^N,$ we get that
\begin{equation}\label{expressao}
\int_{Q_z} |\xi (|v_k|)|^{2_s ^\ast} \dx \leq C \|\xi (|v_k|) \|^2_{H^s (Q_z)} \left(\int_{Q_z} v^2_k \dx \right)^{1-2/2_s^\ast}.
\end{equation}
Moreover, embedding \eqref{Hembd} and relations \eqref{cutoff} implies that,
\begin{align*}
\sum _{z \in \mathbb{Z}} \|\xi (|v_k|) \|^2_{H^s (Q_z)} &= \sum _{z \in \mathbb{Z}} \int _{Q_z } |\xi (|v_k|)|^2\dx + \int _{Q_z} \int _{Q_z} \frac{\left| \xi (|v_k|)(x) - \xi (|v_k|)(y) \right|^2}{|x-y|^{N + 2s}}\dxdy \\
&\leq \int _{\mathbb{R}^N} |\xi (|v_k|)|^2\dx + \max_{t\geq 0} \xi' (t) \sum _{z \in \mathbb{Z}} \int _{Q_z} \int _{Q_z} \frac{\left| v_k(x) -  v_k(y) \right|^2}{|x-y|^{N + 2s}}\dxdy\leq C \|v_k\|^2.
\end{align*}
Thus, we can take the sum over $z \in \mathbb{Z}^N$ in \eqref{expressao} to obtain
\begin{equation}\label{eROU1}
\int_{\mathbb{R}^N} |\xi (|v_k|)|^{2_s ^\ast} \dx \leq C \sup_{z \in \mathbb{Z}^N} \left(\int_{Q_z} v^2_k \dx \right)^{1-2/2_s^\ast}.
\end{equation}
For each $k,$ let $z_k \in \mathbb{Z}^N$ such that
\begin{equation}\label{aux_cc_1}
\sup_{z \in \mathbb{Z}^N} \left(\int_{Q_z} v^2_k \dx \right)^{1-2/2_s^\ast} \leq 2 \left(\int_{Q_{z_k}} v^2_k \dx \right)^{1-2/2_s^\ast}.
\end{equation}
Since $u_k \stackrel{D}{\rightharpoonup} 0,$ we have that $v_k (\cdot - z_k) \rightharpoonup 0$ in $\mathcal{D}^{s,2}(\mathbb{R}^N),$ which allow us to apply embedding \eqref{comp_loc} and obtain that
\begin{equation}\label{aux_cc_2}
\int_{Q_{z_k}} v^2_k \dx = \int_{(0,1)^N} v^2_k (\cdot - z_k)\dx \rightarrow 0,\text{ as }k \rightarrow \infty.
\end{equation}
Replacing \eqref{aux_cc_1} and \eqref{aux_cc_2} in \eqref{eROU1} we conclude that
\begin{equation}\label{concluc1}
\lim _{k \rightarrow \infty} \int_{\mathbb{R}^N} |\xi (|v_k|)|^{2_s ^\ast} \dx = 0.
\end{equation}
Now let
\begin{equation*}
\xi _j (t) = \gamma ^{- \frac{N-2s}{2}j} \xi (\gamma ^{\frac{N-2s}{2}j} t),\quad j \in \mathbb{Z}.
\end{equation*}
From convergence \eqref{concluc1}, we get
\begin{equation}\label{conclusao_cc}
\lim _{k \rightarrow \infty} \int_{\mathbb{R}^N} |\xi _{j_k}(|u_k|) | ^{2_s ^\ast} \dx=\lim _{k \rightarrow \infty} \int_{\mathbb{R}^N} |\xi (|v_k|)|^{2_s ^\ast} \dx = 0,\quad\text{for any sequence }(j_k)\text{ in }\mathbb{Z}. 
\end{equation}
Now the embedding $\mathcal{D}^{s,2}(\mathbb{R}^N) \hookrightarrow L ^{2^{\ast } _s} (\mathbb{R}^N)$ enable us to get the following estimate,
\begin{equation}\label{expressao2}
\int_{\mathbb{R}^N} |\xi _{j}(|u_k|) | ^{2_s ^\ast} \dx \leq C \|\xi _{j}(|u_k|) \|^2 \left(\int_{\mathbb{R}^N} |\xi _{j}(|u_k|) | ^{2_s ^\ast} \dx \right)^{1-2/2_s ^\ast }.
\end{equation}
For $j \in \mathbb{Z},$ let 
\begin{equation*}
\left\{
\begin{aligned}
D_{j,k} &= \left\lbrace x \in \mathbb{R}^N :  \gamma^{-\frac{N-2s}{2}j} \leq |u_k(x)| < \gamma ^{-\frac{N-2s}{2}(j-1)}\right\rbrace ,\\
E_{j,k} &= (D_{j,k} \times \mathbb{R}^N ) \cup (\mathbb{R}^N \times D_{j,k}),\\
L_{j,k} &= \left\lbrace x\in \mathbb{R}^N :  \frac{1}{4} \gamma^{-\frac{N-2s}{2}  j}  \leq |u_k(x)|\leq \frac{3}{4}\gamma^{-\frac{N-2s}{2}(j-1)}\right\rbrace.
\end{aligned}
\right.
\end{equation*}
Since $u_k$ is smooth and has compact support, there exists $j_0$ in $\mathbb{Z}$ and $l$ in $\mathbb{N}$ such that
\begin{equation*}
\supp(u_k) \subset \bigcup_{j=0}^l L_{ j+j_0,k} \subset \bigcup _{j=0} ^l D_{ j+j_0,k},
\end{equation*}
We also have that the sets
\begin{equation*}
S_{j,k}=\bigcup _{m=0}^j  E_{j+j_0,k} \cap E_{m+j_0,k},\quad j=1,\ldots,l,
\end{equation*}
are disjunct as well $E_{j_0,k}$ and $E_{j+j_0,k}\setminus S_{j,k},$ for $j=1,\ldots,l.$ Thus we may write
\begin{align*}
\sum _{j =0 }^l \iint _{E_{j+j_0,k}} \frac{\left| u_k(x) -  u_k(y) \right|^2}{|x-y|^{N + 2s}} &\dxdy = \sum_{j=1}^l \iint _{S_{j,k}} \frac{\left| u_k(x) -  u_k(y) \right|^2}{|x-y|^{N + 2s}}\dxdy \\+\iint _{E_{j_0,k}} & \frac{\left| u_k(x) -  u_k(y) \right|^2}{|x-y|^{N + 2s}} \dxdy+ \sum_{j=1}^l \iint _{E_{j+j_0,k}\setminus S_{j,k}} \frac{\left| u_k(x) -  u_k(y) \right|^2}{|x-y|^{N + 2s}}\dxdy,&\\
&=\iint _{A_{l,k}} \frac{\left| u_k(x) -  u_k(y) \right|^2}{|x-y|^{N + 2s}} \dxdy + \iint _{B_{l,k}} \frac{\left| u_k(x) -  u_k(y) \right|^2}{|x-y|^{N + 2s}} \dxdy,
\end{align*}
where
\begin{equation*}
A_{l,k} = E_{j_0,k}\cup \bigcup _{j=1} ^l E_{j+j_0,k} \setminus S_{j,k}\quad\text{and}\quad B_{l,k} = \bigcup _{j=1} ^l S_{j,k},
\end{equation*}
to get that the estimate
\begin{align*}
\sum _{j =0 }^l\|\xi _{j}(|u_k|) \|^2 &=\frac{C(N,s)}{2} \sum _{j =0 }^l \iint _{E_{j,k}} \frac{\left| \xi _{j}(|u_k|)(x) -  \xi _{j}(|u_k|)(y) \right|^2}{|x-y|^{N + 2s}}\dxdy\\
&\leq\frac{C(N,s)}{2} \max_{t\geq 0} \xi' (t) \sum _{j =0 }^l \iint _{E_{j,k}} \frac{\left| u_k(x) -  u_k(y) \right|^2}{|x-y|^{N + 2s}}\dxdy\leq 2 \max_{t\geq 0} \xi' (t)  \| u_k \|^2.
\end{align*}
Moreover,
\begin{multline*}
\int_{\mathbb{R}^N} |u_k|^{2_s^\ast}\dx \leq \sum _{j =0 }^l \int_{ L_{j,k}} |u_k|^{2_s ^\ast}\dx \\ \leq \sum _{j =0 }^l \int_{ L_{j,k}}|u_k|^{2_s ^\ast}\dx + \int_{D_{j,k} \setminus L_{j,k}} |\xi_j (|u_k|)|^{2_s ^\ast } \dx =\sum _{j =0 }^l \int_{\mathbb{R}^N}|\xi _j(|u_k|)|^{2_s ^\ast } \dx.
\end{multline*}
In view of that, we take the sum over $j=0,\ldots,l$ in \eqref{expressao2} to conclude that
\begin{equation*}
\int_{\mathbb{R}^N} |u_k|^{2_s^\ast}\dx \leq C \sup_{j \in \mathbb{Z}} \left(\int_{\mathbb{R}^N} |\xi _{j}(|u_k|) | ^{2_s ^\ast} \dx \right)^{1-2/2_s ^\ast }.
\end{equation*}
Similarly as before, we choose $(j_k)$ such that
\begin{equation*}
\sup_{j \in \mathbb{Z}} \left(\int_{\mathbb{R}^N} |\xi _{j}(|u_k|) | ^{2_s ^\ast} \dx \right)^{1-2/2_s ^\ast } \leq 2 \left(\int_{\mathbb{R}^N} |\xi _{j_k}(|u_k|) | ^{2_s ^\ast} \dx \right)^{1-2/2_s ^\ast },
\end{equation*}
which, from \eqref{conclusao_cc} implies that $|u_k|_{2^\ast_s} \rightarrow 0.$

Now assume that $u_k \rightarrow 0$ in $L^{2 ^\ast_s}(\mathbb{R}^N).$ Let us argue by contradiction and suppose that there exists $(y_k)$ in $\mathbb{Z}^N$ and $(j_k)$ in $\mathbb{Z}$ such that $d_{y_k,j_k} u_k \rightharpoonup u \neq 0$ in $\mathcal{D}^{s,2}(\mathbb{R}^N).$ The invariance of $d_{y_k,j_k}$ with respect to the $L^{2_s ^\ast}$ norm leads to
\begin{equation*}
| u |_{2_s^\ast} \leq \liminf _k| d_{y_k,j_k} u_k |_{2_s^\ast} = \lim_{k \rightarrow \infty } |u_k |_{2_s^\ast} = 0,
\end{equation*} 
which is a contradiction with the fact that $u \neq 0.$
\end{proof}
\begin{proof}[Proof of Theorem \ref{teo_tinta_frac} completed]
By Theorem \ref{teo_tinta}, we first need to prove that $(\mathcal{D}^{s,2} (\mathbb{R}^N), D_{\mathbb{Z} ^N , \mathbb{Z}})$ is a dislocation space. To do so, we use Proposition \ref{prop3.1}. Let $(d_{y_k,j_k} ) \subset D_{\mathbb{Z} ^N , \mathbb{Z}},$ such that  $d_{y_k,j_k} \not \rightharpoonup 0$ in $\mathcal{D}^{s,2} (\mathbb{R}^N).$ Hence by Lemma \ref{lemma5.1}, $y_k \rightarrow y$ and $j_k \rightarrow j,$ up to a subsequence, and by Lemma \ref{op_conv}, $d_{y_k,j_k} u \rightarrow d_{y,j}u,$ for all $u \in \mathcal{D}^{s,2} (\mathbb{R}^N).$ Therefore Theorem \ref{teo_tinta} holds with $H=\mathcal{D}^{s,2} (\mathbb{R}^N)$ and $D=D_{\mathbb{Z} ^N , \mathbb{Z}}$. It follows immediately assertions \eqref{seis.um} and \eqref{seis.tres}. The assertion \eqref{seis.dois} is guaranteed by Lemma \ref{lemma5.1}, and \eqref{seis.tres} follows from Proposition \ref{cocompact}. Finally, for each $n \in \mathbb{N}_\ast,$ if $(j_k ^{(n)})$ is unbounded we can replace it by a subsequence convergent to $+ \infty$ or $=-\infty,$ by checking either $\limsup_k j_k ^{(n)} = + \infty$ or $\limsup _k j_k ^{(n)} = - \infty.$ If $(j_k ^{(n)}) $ is bounded, we can replace it by a constant subsequence, say $j ^{(n)}.$ Moreover, by taking $v_k ^{(n)} (x)= \gamma ^{-\frac{N-2s}{2}j ^{(n)}}u_k (\gamma ^{- j {(n)}} x + y_k^{(n)} ),$ the convergence \eqref{seis.um} implies
\begin{equation*}
u_k (\cdot + y_k ^{(n)}) = \delta _{- j ^{(n)} }v_k ^{(n)} \rightharpoonup \delta _{- j ^{(n)} } w^{(n)} \text{ in }\mathcal{D}^{s,2} (\mathbb{R}^N),
\end{equation*}
thus we may set $j^{(n)} = 0$ and rename $\delta _{- j ^{(n)} } w^{(n)}$ as $ w^{(n)}.$ Since $\mathbb{N}_\ast$ is possibly infinite, the conclusion follows by a standard diagonal argument in the extraction of each successive subsequence. \end{proof}

\section{Self-similar functions}\label{self-similar_functions}
We now pass to study a class of non-linearity consistent with our profile decomposition. As it can be seen in the following examples, this class of nonlinearity can been seen as asymptotically oscillatory about the critical power $|t|^{2^\ast _s}$ and not satisfying that $t^{-1}f(x,t)$ is an increasing function.
\begin{definition}\label{defi_ss}
We say that $F \in C(\mathbb{R})$ is fractional self-similar if there exist $\gamma >1$ and $0<s<\min\{1,N/2\}$ such that
 \begin{equation*}
 F(t)=\gamma ^{-Nj} F(\gamma ^{\frac{N-2s}{2}j} t ), \;  \forall j \in \mathbb{Z}, \; t \in \mathbb{R}.
 \end{equation*}
In this case we use to say that $F$ is fractional self-similar 
 with factor $\gamma$ and fraction $s$.
 \end{definition}
\begin{example}
Typical examples of self-similar functions are
\begin{enumerate}[label=(\roman*)]
\item $F(t)=|t|^{2^\ast _s},$ which is self-similar for every factor $\gamma$ and fraction $0<s<\min\{1,N/2\};$
\item $H(t)=\cos (\ln |t| ) |t|^{2^\ast _s},$ $H(0):=0,$ which is self-similar with factor $e^{4 \pi /(N-2s)}$ and every fraction $0<s<\min\{1,N/2\}.$
\end{enumerate}
\end{example}
\begin{remark}
The function $F(t) \in C^1 (\mathbb{R})$ is self-similar if, and only if
\begin{equation*}
F'(t)=\gamma ^{- \frac{N+2s}{2}j} F'\left(\gamma ^{ \frac{N-2s}{2}j}  t \right), \; \forall j \in \mathbb{Z}, \text{ and }t \in \mathbb{R}.
\end{equation*}
Consequently, we can say that $f(t)\in C(\mathbb{R})$ is self-similar whenever its primitive $F(t) = \int _0 ^t f(\tau) d\tau$ satisfies the condition of Definition \ref{defi_ss}. For the local case a class of self-similar function was introduced in \cite{tinta_pos,tintabook,tintapaper}.
\end{remark}
In the next result we derive the basic properties of self-similar functions.
\begin{lemma}\label{basic_prop_ss}
Assume that $F(t)$ is self-similar.
\begin{enumerate}[label=(\roman*)]
\item\label{bpss1} For each $u \in L^{2_s ^{\ast }} (\mathbb{R}^N)$ and $j \in \mathbb{Z},$ we have
\begin{equation}\label{absor}
\int _{\mathbb{R}^N} F \left(\gamma ^{\frac{N-2s}{2}j} u(\gamma ^j x )\right) \dx = \int _{\mathbb{R}^N} F (u) \dx;
\end{equation}
\item\label{bpss2} There exists $C>0$ such that
\begin{equation}\label{CRITG}
	|F(t)|\leq C |t|^{2_s ^\ast}, \; \forall t \in \mathbb{R}.
\end{equation}
Moreover, if $F \in C^2 (\mathbb{R}),$ then there exists $C>0,$ such that
\begin{equation}\label{assumption_reg}
|F(t)|+|F'(t)t|+|F''(t) t^2|\leq C |t|^{2_s ^\ast}, \; \forall  t \in \mathbb{R};
\end{equation}
\item\label{bpss3} If $F(t)$ is locally Lipschitz then $F(t)$ satisfies \eqref{f_extra}.
\end{enumerate}
\end{lemma}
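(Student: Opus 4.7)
\textbf{Proof plan for Lemma \ref{basic_prop_ss}.}

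\emph{Part \ref{bpss1}.} My plan is to apply the self-similarity relation pointwise and then change variables. Writing $v(x) = \gamma^{\frac{N-2s}{2}j} u(\gamma^j x)$, the identity in Definition \ref{defi_ss} (rewritten as $F(\gamma^{\frac{N-2s}{2}j}\tau) = \gamma^{Nj} F(\tau)$) yields $F(v(x)) = \gamma^{Nj} F(u(\gamma^j x))$. A substitution $y = \gamma^j x$ contributes the Jacobian $\gamma^{-Nj}$ which exactly cancels the factor $\gamma^{Nj}$, giving \eqref{absor}. All terms make sense in $L^1$ thanks to the bound \eqref{CRITG}, which I establish in the next part.

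\emph{Part \ref{bpss2}.} The key observation is that the orbit $\{\gamma^{\frac{N-2s}{2}j}\}_{j\in\mathbb{Z}}$ acts transitively on half-lines, and the exponent $2_s^\ast$ is precisely the one that matches the scaling of $F$. Given $t \ne 0$, I choose the unique $j \in \mathbb{Z}$ for which $\tau := \gamma^{-\frac{N-2s}{2}j}|t|$ lies in $[1,\gamma^{\frac{N-2s}{2}}]$. The self-similar identity gives $|F(t)| = \gamma^{Nj}|F(\mathrm{sgn}(t)\tau)|$, while $|t|^{2_s^\ast} = \gamma^{Nj}\tau^{2_s^\ast}$ (using $\tfrac{N-2s}{2}\cdot 2_s^\ast = N$). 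Since $F$ is continuous and $\mathrm{sgn}(t)\tau$ ranges over the compact set $[-\gamma^{\frac{N-2s}{2}},-1]\cup[1,\gamma^{\frac{N-2s}{2}}]$, the ratio $|F(\mathrm{sgn}(t)\tau)|/\tau^{2_s^\ast}$ is uniformly bounded, yielding \eqref{CRITG}. For \eqref{assumption_reg}, differentiating $F(t)=\gamma^{-Nj}F(\gamma^{\frac{N-2s}{2}j}t)$ once and twice in $t$ produces the scaling identities
\begin{equation*}
F'(\gamma^{\frac{N-2s}{2}j}t) = \gamma^{\frac{N+2s}{2}j} F'(t), \qquad F''(\gamma^{\frac{N-2s}{2}j}t) = \gamma^{\frac{N+6s}{2}j} F''(t),
\end{equation*}
so $|F'(t)t|$ and $|F''(t)t^2|$ both scale like $\gamma^{Nj}$, and the same compactness argument applies to $F'$ and $F''$ on $[1,\gamma^{\frac{N-2s}{2}}]$.

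\emph{Part \ref{bpss3}.} Taking $j\to\infty$ in the defining identity forces $F(0)=0$. Combining local Lipschitzness of $F$ with the scaling identity for $F'$ (valid a.e. by Rademacher's theorem) yields $|F'(t)| \leq C|t|^{2_s^\ast - 1}$ a.e. on $\mathbb{R}$, using the same compactness trick as in Part \ref{bpss2}. Now I plan to treat the two-term case first: for $a,b\in\mathbb{R}$, the fundamental theorem of calculus for absolutely continuous functions gives
\begin{equation*}
F(a+b) - F(b) = \int_0^1 F'(b+\sigma a)\, a\, \mathrm{d}\sigma, \qquad F(a) = F(a) - F(0) = \int_0^1 F'(\sigma a)\, a\, \mathrm{d}\sigma,
\end{equation*}
so $|F(a+b)-F(a)-F(b)| \leq C|a|\bigl[(|a|+|b|)^{2_s^\ast-1} + |a|^{2_s^\ast-1}\bigr]$. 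Splitting into the cases $|a|\leq|b|$ and $|a|>|b|$, each side is absorbed into a symmetric bound $C(|a|^{2_s^\ast-1}|b| + |b|^{2_s^\ast-1}|a|)$. The $M$-term statement follows by telescoping: with $T_n = \sum_{k\leq n} a_k$, I write
\begin{equation*}
F\Bigl(\sum_{n=1}^M a_n\Bigr) - \sum_{n=1}^M F(a_n) = \sum_{n=2}^M \bigl[F(T_{n-1}+a_n) - F(T_{n-1}) - F(a_n)\bigr],
\end{equation*}
apply the two-term estimate to each summand, and bound $|T_{n-1}|^{2_s^\ast-1}\leq C_M \sum_{k<n}|a_k|^{2_s^\ast-1}$ via convexity.

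The main obstacle is Part \ref{bpss3}: the sharp form of \eqref{f_extra} requires a symmetric bound, so care is needed in splitting the two-term estimate into the $|a|\leq|b|$ and $|a|>|b|$ cases and in controlling the ``remainder'' $|F(a)|$ that arises from applying the mean value theorem only to $F(a+b)-F(b)$. The other parts are essentially direct applications of the scaling invariance.
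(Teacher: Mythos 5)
Your plan for parts \ref{bpss1} and \ref{bpss2} follows the paper's argument: change of variables for \ref{bpss1}, and for \ref{bpss2} a fundamental-domain-plus-scaling argument identical in spirit to the paper's (which uses the interval $[\gamma^{-\frac{N-2s}{2}},\gamma^{\frac{N-2s}{2}}]$ in place of your $[1,\gamma^{\frac{N-2s}{2}}]$). One arithmetic slip there: differentiating $F'(\gamma^{\frac{N-2s}{2}j}t)=\gamma^{\frac{N+2s}{2}j}F'(t)$ once more gives $F''(\gamma^{\frac{N-2s}{2}j}t)=\gamma^{2sj}F''(t)$, not $\gamma^{\frac{N+6s}{2}j}F''(t)$. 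Your stated conclusion that $|F''(t)t^2|$ scales like $\gamma^{Nj}$ is the correct one, but it is only consistent with the exponent $2s$; with $\tfrac{N+6s}{2}$ the combination $|F''(t)t^2|$ would scale like $\gamma^{\frac{3N+2s}{2}j}\ne\gamma^{Nj}$. Worth fixing, but the argument survives once the exponent is corrected.

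For part \ref{bpss3} you take a genuinely different route from the paper. The paper argues directly: it fixes a compact interval $I$, uses the local Lipschitz bound on $I$, and then rescales, working through three cases according to whether $|a_1|,|a_2|$ are each above or below $1$ and whether $a_1+a_2$ lands in $I$. You instead upgrade the Lipschitz hypothesis via Rademacher to an a.e. pointwise bound $|F'(t)|\le C|t|^{2_s^\ast-1}$ and then integrate (FTC for locally Lipschitz $F$), obtaining a clean two-term estimate. This works and is arguably more transparent. One point needs to be spelled out: the estimate you derive, $|F(a+b)-F(a)-F(b)|\le C|a|(|a|+|b|)^{2_s^\ast-1}+C|a|^{2_s^\ast}$, is \emph{not} symmetric in $a,b$, and when $|a|>|b|$ the remainder term $C|a|^{2_s^\ast}$ is not controlled by $|a|^{2_s^\ast-1}|b|+|b|^{2_s^\ast-1}|a|$ (take $b\to 0$ with $a$ fixed). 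The fix is to exploit the symmetry of the target quantity and assume WLOG $|a|\le|b|$ before applying your estimate (equivalently, swap the FTC representation to $F(a+b)-F(a)=\int_0^1 F'(a+\sigma b)b\,d\sigma$ when $|a|>|b|$). You gesture at this in your closing remark, but the case split as literally written does not close the $|a|>|b|$ case. The telescoping reduction of the $M$-term statement to the two-term one, with the convexity bound on $|T_{n-1}|^{2_s^\ast-1}$ and the trivial bound on $|T_{n-1}|$, is correct.
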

\begin{proof}
\ref{bpss1} The identity \eqref{absor} follows immediately by using the change of variables theorem in the integral on the left side of the equation.\\
\ref{bpss2} Fix the interval $L=[\gamma ^{-\frac{N-2s}{2}}, \gamma ^{\frac{N-2s}{2}}].$ By continuity, there exists $C=C(L)$ such that $|F(t)| \leq C t^{2_s ^\ast},$ for all $t \in L.$ Now, let $0<t<\gamma ^{-\frac{N-2s}{2}}$ or $t>\gamma ^{\frac{N-2s}{2}},$ then (in any case) there exists $j \in \mathbb{Z}$ such that $\gamma ^{\frac{N-2s}{2} j}t \in L,$ and consequently, 
\begin{equation*}
\gamma ^{Nj} |F(t)| = |F(\gamma ^{\frac{N-2s}{2}j}t)|\leq \gamma ^{Nj} C t^{2^\ast _s}.
\end{equation*}
The case where $t<0$ is analogous. The proof of \eqref{assumption_reg} follow a similar argument.\\
\ref{bpss3} The proof is by induction in $M.$ So we just need to prove that there exits $C>0$ such that
\begin{equation}\label{case_um}
\left|F(a_1 + a_2) - F(a_1) - F(a_2)\right| \leq C \left( |a_1| |a_2| ^{2^\ast _s -1} + |a_1| ^{2^\ast _s -1} |a_2| \right).
\end{equation}
To do so, we first fix the interval $I = [-\gamma ^{\frac{N-2s}{2} k}, \gamma ^{\frac{N-2s}{2} k}],$ where $k \in \mathbb{Z}$ is taken such that $\gamma ^{\frac{N-2s}{2} (k-1)} > 2,$ to use the Lipschitz assumption. The proof follows by considering several cases.\\
\textit{Case 1:} Suppose that $|a_1| \leq 1 \leq |a_2|$ and $a_1 + a_2 \in I.$ Thus there exists $C=C(I)$ such that
\begin{equation*}
\left|F(a_1 + a_2) - F(a_1) - F(a_2)\right| \leq C (|a_1| + |F(a_1)| ).
\end{equation*}
By condition \eqref{CRITG} we can estimate
\begin{equation*}
|a_1| + |F(a_1)| \leq C(|a_1| |a_2| ^{2^\ast _s -1} + |a_1| ^{2^\ast _s -1} |a_2| ).
\end{equation*}
\textit{Case 2:} Assume that $|a_1| , |a_2| \geq 1$ and $a_1 + a_2 \in I.$ Then, there exists $j_1 \in \mathbb{Z},$ $j_1 \leq 0,$ such that $|b_1| \leq 1,$ where $b_1 := \gamma ^{\frac{N-2s}{2}j_1} a_1.$ It is easy to see that $b_1 + a_2 \in I,$ hence by the first case, we have the following estimate
\begin{align*}
|F(b_1 + a_2) - F(b_1) - F(a_2)| &\leq \gamma ^{\frac{N-2s}{2}j_1} C(|a_1| ^{2^\ast _s -1} |a_2| + |a_1| |a_2| ^{2^\ast _s -1} )\\
&\leq C(|a_1| ^{2^\ast _s -1} |a_2| + |a_1| |a_2| ^{2^\ast _s -1} ),
\end{align*}
Therefore we can estimate as follows
\begin{multline*}
\left|F(a_1 + a_2) - F(a_1) - F(a_2)\right| \leq \\ \left|F(b_1 + a_2) - F(b_1) - F(a_2)\right| + \left|F(a_1 + a_2) - F(b_1 + a_2) + F(b_1) - F(a_1) \right|,
\end{multline*}
with
\begin{equation*}
|F(a_1 + a_2) - F(a_1) - F(b_1 + a_2) + F(b_1)| \leq 2C |a_2| \leq C |a_1| ^{2_s ^\ast } |a_2|.
\end{equation*}
\textit{Case 3:} Suppose that $|a_1|,|a_2| \leq 1.$ Since
\begin{equation*}
\mathbb{R} =\bigcup _{j \in \mathbb{Z}} I^-_j  \cup I^+_j,
\end{equation*}
where $I ^{-}_j = [-\gamma ^{\frac{N-2s}{2}j} , -\gamma ^{\frac{N-2s}{2}(j-1)}]$ and $I ^{+}_j = [\gamma ^{\frac{N-2s}{2}(j-1)} , \gamma ^{\frac{N-2s}{2}j}]$ there exists $j_0 \in \mathbb{Z}$ such that
\begin{equation*}
\gamma ^{\frac{N-2s}{2}j_0} (a_1 + a_2) \in \left[- \gamma ^{\frac{N-2s}{2} k}, - \gamma ^{\frac{N-2s}{2}(k-1)} \right] \cup \left[\gamma ^{\frac{N-2s}{2} (k-1)}, \gamma ^{\frac{N-2s}{2} k} \right]
\end{equation*}
Let $b_1  = \gamma ^{\frac{N-2s}{2}j_0} a_1$ and $b_2 = \gamma ^{\frac{N-2s}{2}j_0} a_2,$ with the necessity $|b_1| \geq 1 $ or $|b_2| \geq 1,$ because $\gamma ^{\frac{N-2s}{2}(k-1)} > 2.$ Consequently we can use the first or the second case to get that
\begin{align*}
\gamma ^{Nj_0} |F(a_1 + a_2) - F(a_1) - F(a_2)| &= |F(b_1+b_2) - F(b_1) - F(b_2)|\\
 & \leq \gamma ^{Nj_0} C (|a_1| ^{2^\ast _s -1} |a_2| + |a_1| |a_2| ^{2^\ast _s -1} ).
\end{align*}
The general case follows by a similar argument as above, thus we conclude that \eqref{case_um} holds.
\end{proof}
\begin{remark}\label{they_are_ss}
If $f(x,t)$ satisfies \eqref{f_5} then
\[
\left\{	
	\begin{aligned}
	F_0 (t) &= \lim _{|x| \rightarrow \infty} F(x,t). \\
	F_+ (t) &= \lim _{j \in \mathbb{Z}, j \rightarrow + \infty} \gamma ^{-Nj} F\left(\gamma ^{-j}x , \gamma ^{\frac{N-2s}{2}j}t\right),\\
	F_- (t) &= \lim _{j \in \mathbb{Z}, j \rightarrow - \infty} \gamma ^{-Nj} F\left(\gamma ^{-j}x , \gamma ^{\frac{N-2s}{2}j}t\right).
	\end{aligned}
	\right.
	\]
uniformly in compact sets. Furthermore, $F_+(t)$ and $F_{-}(t)$ are self-similar.
\end{remark}
\section{On the behavior of weak decomposition convergence under nonlinearities}\label{onbehaviour}
Concerning the assumptions \eqref{f_extra}, \eqref{f_5}, and \eqref{selfsimilar}, we have the following results, which provides a way to link the weak convergence decomposition (as also the latter lines of Theorem \ref{teo_tinta_frac}) and the limit over the energy functional $I$ for bounded sequences in $\mathcal{D}^{s,2} (\mathbb{R}^N).$ They are mainly used to prove the existence results stated in Sect. \ref{Application}. Also, the next result can be seen as a generalization of the well know Brezis-Lieb Lemma \cite{brezis-lieb} (see Corollary \ref{brezis-lieb}).
\begin{proposition}\label{lemma2}
	Let $0<s<\min\{1,N/2\}$ and assume that $f(x,t)$ satisfies \eqref{f_1}, \eqref{f_2}, \eqref{f_extra} and \eqref{f_5}. Let $(u_k)$ in $\mathcal{D}^{s,2} (\mathbb{R}^N)$ be a bounded sequence and $(w ^{(n)}) _{n \in \mathbb{N}_{\ast } }$ in $\mathcal{D}^{s,2} (\mathbb{R}^N),$ $n \in \mathbb{N}_{\ast },$ provided by Theorem \ref{teo_tinta_frac}. Then
	\begin{multline}\label{eqlemma2}
	\lim _{k \rightarrow \infty} \int _{\mathbb{R}^N} F(x,u_k) \dx = \int _{\mathbb{R}^N} F(x,w ^{(1)}) \dx\\+\sum_{n \in \mathbb{N} _0,n>1} \int _{\mathbb{R}^N} F_0 (w ^{(n)}) \dx +  \sum_{n \in \mathbb{N} _{+ }} \int _{\mathbb{R}^N} F_+ (w^{(n)}) \dx + \sum_{n \in \mathbb{N} _{- }} \int _{\mathbb{R}^N} F_{-} (w^{(n)}) \dx.
	\end{multline}
\end{proposition}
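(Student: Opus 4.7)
My plan is to combine the profile decomposition of Theorem~\ref{teo_tinta_frac}, the asymptotic additivity \eqref{f_extra}, and the uniform convergence in \eqref{f_5} to reduce $\int F(x, u_k)\,dx$ to one integral per profile, and then to handle each profile integral by a change of variables plus dominated convergence. Setting
\begin{equation*}
\phi_k^{(n)}(x) := \gamma^{\frac{N-2s}{2}j_k^{(n)}} w^{(n)}\bigl(\gamma^{j_k^{(n)}}(x - y_k^{(n)})\bigr),
\end{equation*}
the decomposition \eqref{seis.quatro} gives $u_k = \sum_{n \in \mathbb{N}_\ast} \phi_k^{(n)} + r_k$ with $r_k \to 0$ in $L^{2_s^\ast}(\mathbb{R}^N)$ and the series converging uniformly in $k$. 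For each $M$, write $u_k = S_k^{(M)} + R_k^{(M)}$ with $S_k^{(M)} = \sum_{n \in \mathbb{N}_\ast,\ n \leq M} \phi_k^{(n)}$, so that $\varepsilon_M := \sup_k \|R_k^{(M)}\|_{L^{2_s^\ast}} \to 0$ as $M \to \infty$.

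\textbf{Splitting via asymptotic additivity.} Apply \eqref{f_extra} to the $(M{+}1)$-tuple $(\phi_k^{(1)},\dots,\phi_k^{(M)}, R_k^{(M)})$, integrate, and use H\"older to write
\begin{equation*}
\int_{\mathbb{R}^N} F(x, u_k)\,dx = \sum_{\substack{n \in \mathbb{N}_\ast \\ n \leq M}} \int_{\mathbb{R}^N} F(x, \phi_k^{(n)})\,dx + \int_{\mathbb{R}^N} F(x, R_k^{(M)})\,dx + A_k^{(M)} + B_k^{(M)},
\end{equation*}
where $A_k^{(M)}$ collects cross terms for $n \neq m \leq M$ and $B_k^{(M)}$ collects cross terms involving $R_k^{(M)}$. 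By \eqref{f_2} and the $L^{2_s^\ast}$-boundedness of $(u_k)$, both $|B_k^{(M)}|$ and $|\int F(x, R_k^{(M)})\,dx|$ are bounded by $C(\varepsilon_M + \varepsilon_M^{2_s^\ast})$ uniformly in $k$. The remaining cross terms $\int |\phi_k^{(n)}||\phi_k^{(m)}|^{2_s^\ast - 1}\,dx$ with $n \neq m$ vanish as $k \to \infty$: approximating $w^{(n)}, w^{(m)}$ in $L^{2_s^\ast}$ by compactly supported functions, the orthogonality \eqref{seis.dois} forces the supports of the rescaled profiles to become disjoint for $k$ large, so the approximated cross term is zero; H\"older then controls the approximation error uniformly in $k$.

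\textbf{Limit of each profile integral.} Changing variables $y = \gamma^{j_k^{(n)}}(x - y_k^{(n)})$ gives
\begin{equation*}
\int_{\mathbb{R}^N} F(x, \phi_k^{(n)})\,dx = \int_{\mathbb{R}^N} \gamma^{-N j_k^{(n)}} F\bigl(\gamma^{-j_k^{(n)}}y + y_k^{(n)},\ \gamma^{\frac{N-2s}{2}j_k^{(n)}} w^{(n)}(y)\bigr)\,dy.
\end{equation*}
For $n = 1$, since $j_k^{(1)} = y_k^{(1)} = 0$, this equals $\int F(x, w^{(1)})\,dx$. For $n \in \mathbb{N}_0 \setminus \{1\}$, one has $j_k^{(n)} = 0$ and $|y_k^{(n)}| \to \infty$, so \eqref{f_5} yields pointwise convergence of the integrand to $F_0(w^{(n)}(y))$ with envelope $C|w^{(n)}|^{2_s^\ast}$ from \eqref{f_2}; dominated convergence delivers $\int F_0(w^{(n)})\,dy$. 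For $n \in \mathbb{N}_\pm$, integrating the limit in \eqref{f_5} with respect to $t$ gives
\begin{equation*}
F_\pm(t) = \lim_{j \to \pm\infty} \gamma^{-Nj} F\bigl(\gamma^{-j}x,\ \gamma^{\frac{N-2s}{2}j} t\bigr),
\end{equation*}
uniformly in $x \in \mathbb{R}^N$ and on compact $t$-sets; since the scaling $\gamma^{-N j_k^{(n)}}$ exactly cancels $\gamma^{\frac{N-2s}{2}\cdot 2_s^\ast\cdot j_k^{(n)}} = \gamma^{N j_k^{(n)}}$ in the pointwise bound inherited from \eqref{f_2}, dominated convergence yields $\int F_\pm(w^{(n)})\,dy$.

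\textbf{Conclusion and main obstacle.} Letting $k \to \infty$ with $M$ fixed and then $M \to \infty$ gives \eqref{eqlemma2}; absolute convergence of the right-hand series follows from \eqref{seis.tres}, the embedding \eqref{embds}, and the bound $|F_\kappa(t)| \leq C|t|^{2_s^\ast}$ inherited from \eqref{f_2} via Lemma~\ref{basic_prop_ss}. I expect the principal technical difficulty to be the cross-term analysis: the three distinct scaling regimes for $n \in \mathbb{N}_0,\mathbb{N}_+,\mathbb{N}_-$ force a case split by the sign of $j_k^{(n)} - j_k^{(m)}$, and one must carefully combine the divergence in \eqref{seis.dois} with the compactly supported approximations of the profiles so that disjointness of supports holds in the correct coordinate frame.
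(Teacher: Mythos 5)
The overall architecture of your proof matches the paper's: reduce to a finite subcollection of profiles, apply \eqref{f_extra} to split $\Phi$ of the sum into a sum of single-profile integrals plus cross terms, identify each single-profile limit by change of variables and dominated convergence (using the uniform limits in \eqref{f_5} and the exact cancellation $\gamma^{-Nj}\cdot\gamma^{Nj}=1$ in the pointwise bound, which you correctly note), and send $M\to\infty$ at the end. One small imprecision: with $R_k^{(M)}$ containing the $L^{2_s^\ast}$-remainder $r_k$ of \eqref{seis.quatro}, the quantity $\sup_k\|R_k^{(M)}\|_{L^{2_s^\ast}}$ does not tend to zero as $M\to\infty$; you want $\limsup_k$ there, which is what you actually use when you send $k\to\infty$ with $M$ fixed.

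The genuine gap is in the cross-term argument. You claim that after approximating $w^{(n)},w^{(m)}$ by compactly supported functions, ``the orthogonality \eqref{seis.dois} forces the supports of the rescaled profiles to become disjoint for $k$ large.'' That is only true in the translation-divergence regime, i.e.\ when $j_k^{(n)}-j_k^{(m)}$ stays bounded and $|\gamma^{j_k^{(n)}}(y_k^{(n)}-y_k^{(m)})|\to\infty$. In the dilation regimes it fails: if, say, $j_k^{(n)}-j_k^{(m)}\to+\infty$ and $y_k^{(n)}=y_k^{(m)}$ for all $k$, then in the $n$-th coordinate frame the support of the $m$-th rescaled profile is a ball of radius $\gamma^{j_k^{(n)}-j_k^{(m)}}R$ centered at the origin, which shrinks to a point that remains inside the support of $\tilde w^{(n)}$, so the supports never separate. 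The cross term still vanishes, but the mechanism is different: by H\"older on the shrinking ball $A_k$, $\int|\tilde w^{(n)}|^{2_s^\ast-1}|d_k\tilde w^{(m)}|\,dy\leq\|\tilde w^{(n)}\|_{L^{2_s^\ast}(A_k)}^{2_s^\ast-1}\|\tilde w^{(m)}\|_{2_s^\ast}\to0$ since $\tilde w^{(n)}$ is bounded with compact support and $|A_k|\to0$ (and symmetrically when the ball inflates). So your case split has to invoke this shrinking-mass estimate, not disjointness. For comparison, the paper avoids the casework entirely: after changing variables the cross term becomes $\int|w^{(n)}|^{2_s^\ast-1}g_k\,dx$ where $g_k$ is a sequence of dislocated copies of $|w^{(m)}|$; by \eqref{seis.dois} and Lemma~\ref{lemma5.1}, $g_k\rightharpoonup0$ in $\mathcal{D}^{s,2}(\mathbb{R}^N)$, and since $v\mapsto\int|w^{(n)}|^{2_s^\ast-1}v\,dx$ is a continuous linear functional (H\"older plus the embedding $\mathcal{D}^{s,2}\hookrightarrow L^{2_s^\ast}$), the cross term goes to zero in one stroke across all three regimes.
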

\begin{proof}
	Let us first introduce the notation
	\begin{equation*}
	d _k ^{(n)} u (x)= \gamma ^{\frac{N-2s}{2} j_k ^{(n)} }u(\gamma^{j _k ^{(n)}} ( x - y_k^{(n)} ) ),\quad u \in \mathcal{D}^{s,2} (\mathbb{R}^N),
	\end{equation*}
	where $(y_k ^{(n)}) _ {k \in \mathbb{Z}} \subset \mathbb{Z}^N,$ $(j _k ^{(n)}) _{k \in \mathbb{N}}  \subset \mathbb{Z}.$ By \eqref{f_2}, the functional 
	\begin{equation*}
	\Phi(u)=\int_{\mathbb{R}^N} F (x,u) \dx,\quad u \in \mathcal{D}^{s,2}(\mathbb{R}^N),
	\end{equation*}
is uniformly continuous in bounded sets of $L^{2_s ^{\ast }} (\mathbb{R}^N),$ which implies (by \eqref{seis.tres} and \eqref{seis.quatro}) that
	\begin{equation*}
	\lim _{k \rightarrow \infty}\left[\Phi (u_k) - \Phi \left( \sum _{n \in \mathbb{N}_{\ast }} d_k ^{(n)} w^{(n)}\right) \right] =0.
	\end{equation*}
	To prove \eqref{eqlemma2} we observe that the uniform convergence of the series in \eqref{seis.quatro} allows us to consider only the case where $\mathbb{N}_\ast= \{1,\ldots, M\}.$ Thus,
	\begin{align}
	&\lim _{k \rightarrow \infty}\left[\sum _{n \in \mathbb{N}_{0 }} \Phi \left(  w^{(n)}( \cdot - y_k^{(n)} ) \right) -\Phi(w^{(1)})-\sum_{n \in \mathbb{N} _0,n>1} \Phi_0 (w ^{(n)})\right] = 0\label{giant2},\\
	&\lim _{k \rightarrow \infty}\left[ \sum _{n \in \mathbb{N}_{\pm  }}  \Phi( d_k ^{(n)} w^{(n)}) - \sum _{n \in \mathbb{N}_{\pm  }} \Phi_{\pm } (w ^{(n)})  \right]=0, \label{giant3}
	\end{align}
	follows immediately from the assumption \eqref{f_5}, by change of variables and the use of Lebesgue Convergence Theorem. Therefore it is sufficient to prove that
	\begin{equation}\label{giant1}
	\lim _{k\rightarrow\infty}\left[ \Phi \left( \sum _{n \in \mathbb{N}_{\ast }} d_k ^{(n)} w^{(n)}\right)-  \sum _{n \in \mathbb{N}_{\ast }} \Phi ( d_k ^{(n)} w^{(n)} ) \right]=0.
	\end{equation}
	Indeed, by \eqref{f_extra} we have for all $m \neq n,$
	\begin{equation*}
	\left| \Phi \left( \sum _{n \in \mathbb{N}_{\ast }} d _k ^{(n)} w ^{(n)}\right)-  \sum _{n \in \mathbb{N}_{\ast }} \Phi (d _k ^{(n)} w ^{(n)} ) \right| \leq \sum _{m \neq n \in \mathbb{N} _{\ast }} \int _{\mathbb{R}^N} |d _k ^{(n)} |^{2 _s ^\ast -1}  | d _k ^{(m)} | \dx.
	\end{equation*}
	But by a change of variable we can see that
	\begin{equation*}
	\int _{\mathbb{R}^N} | d _k ^{(n)} |^{2 _s ^\ast -1}  | d _k ^{(m)} | \dx = \int _{\mathbb{R}^N} |w^{(n)}| ^{2 _s ^\ast -1} g_k(|w^{(m)}|) \dx,
	\end{equation*}
	where 
	\begin{equation*}
	g_k(|w^{(m)}|) = \gamma ^{\frac{N-2s}{2} (j _k ^{(m)} - j _k ^{(n)} )}  w^{(m)} \left( \gamma ^{j _k ^{(m)} - j _k ^{(n)} } (\cdot-\gamma ^{j_k ^{(n)}} (y_k ^{(m)} - y _k ^{(n)})  )\right) \rightharpoonup 0\text{ in } \mathcal{D}^{s,2} (\mathbb{R}^N),
	\end{equation*}
due to \eqref{seis.dois} and Lemma \ref{lemma5.1}. Since 
\begin{equation*}
\alpha(v) = \int _{\mathbb{R}^N} |w^{(n)}| ^{2 _s ^\ast -1} v \dx
\end{equation*}
is a continuous linear functional in $\mathcal{D}^{s,2} (\mathbb{R}^N)$ we conclude \eqref{giant1}.
\end{proof}
\begin{corollary}\label{rightone}
Let $(u_k)$ be a bounded sequence in $\mathcal{D}^{s,2} (\mathbb{R}^N)$ and $(w ^{(n)}) _{n \in \mathbb{N}_{\ast } }$ in $\mathcal{D}^{s,2} (\mathbb{R}^N),$ $n \in \mathbb{N}_{\ast },$ provided by Theorem \ref{teo_tinta_frac}. If $F(x,t)=F(t)$ satisfies \eqref{selfsimilar} and is locally Lipschitz then, on up to a subsequence,
\begin{equation}\label{lemaUM}
\lim _{k \rightarrow \infty} \int _{\mathbb{R}^N } F(u_k) \dx = \sum _{n \in \mathbb{N} _{\ast }} \int _{\mathbb{R}^N} F(w^{(n)}) \dx.
\end{equation}
\end{corollary}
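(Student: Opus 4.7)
My plan is to reduce Corollary \ref{rightone} to a direct application of Proposition \ref{lemma2}. The key observation is that, for an autonomous self-similar $F$, the three asymptotic profiles $F_0, F_+, F_-$ in \eqref{f_5} all coincide with $F$ itself, so the four sums appearing on the right-hand side of \eqref{eqlemma2} collapse into the single sum over $\mathbb{N}_\ast$.

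First I would verify each hypothesis of Proposition \ref{lemma2} for the autonomous datum $F(x,t)=F(t)$. The Carath\'eodory condition \eqref{f_1} is immediate since $F$ is continuous and independent of $x$. The growth condition \eqref{f_2} for $f:=F'$ follows by applying the scaling argument in the proof of Lemma \ref{basic_prop_ss}(ii) to $f$: self-similarity of $F$ transfers to $f$ (in the almost-everywhere sense when $F$ is merely locally Lipschitz) via the identity
\[
f(t)=\gamma^{-\frac{N+2s}{2}j}\,f\!\bigl(\gamma^{\frac{N-2s}{2}j}t\bigr),\qquad j\in\mathbb{Z},
\]
recalled in the remark after Definition \ref{defi_ss}, and a localization of this identity to the interval $[\gamma^{-(N-2s)/2},\gamma^{(N-2s)/2}]$ yields $|f(t)|\le C|t|^{2_s^\ast-1}$. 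The asymptotic additivity \eqref{f_extra} is then supplied directly by Lemma \ref{basic_prop_ss}(iii).

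Next I would identify the three limiting profiles in \eqref{f_5}. Because $f$ does not depend on $x$, $f_0(t)=\lim_{|x|\to\infty}f(x,t)=f(t)$. For the scaling limits, the self-similarity identity above gives
\[
\gamma^{-\frac{N+2s}{2}j}\,f\!\bigl(\gamma^{-j}x,\gamma^{\frac{N-2s}{2}j}t\bigr)=\gamma^{-\frac{N+2s}{2}j}\,f\!\bigl(\gamma^{\frac{N-2s}{2}j}t\bigr)=f(t)
\]
for every $j\in\mathbb{Z}$, so $f_+(t)=f_-(t)=f(t)$, with convergence trivially uniform in $x$ and in $t$ on compact sets. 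Integrating, $F_0=F_+=F_-=F$. Applying Proposition \ref{lemma2}, the formula \eqref{eqlemma2} becomes
\[
\lim_{k\to\infty}\int_{\mathbb{R}^N}F(u_k)\dx=\int_{\mathbb{R}^N}F(w^{(1)})\dx+\sum_{\substack{n\in\mathbb{N}_0\\ n>1}}\int_{\mathbb{R}^N}F(w^{(n)})\dx+\sum_{n\in\mathbb{N}_+}\int_{\mathbb{R}^N}F(w^{(n)})\dx+\sum_{n\in\mathbb{N}_-}\int_{\mathbb{R}^N}F(w^{(n)})\dx,
\]
and since $1\in\mathbb{N}_0$ and $\mathbb{N}_\ast=\mathbb{N}_0\sqcup\mathbb{N}_+\sqcup\mathbb{N}_-$, the right-hand side equals $\sum_{n\in\mathbb{N}_\ast}\int_{\mathbb{R}^N}F(w^{(n)})\dx$, which is the claim.

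The only delicate point is the passage to $f=F'$ in the Lipschitz (rather than $C^1$) setting, for the sake of formally invoking \eqref{f_1}--\eqref{f_2}; this is harmless because the proof of Proposition \ref{lemma2} never uses $f$ itself, only the continuity of $F$ and the growth bound $|F(t)|\le C|t|^{2_s^\ast}$ furnished by Lemma \ref{basic_prop_ss}(ii), together with \eqref{f_extra} and the identification $F_\kappa=F$ for $\kappa=0,\pm$. Thus no substantive obstacle arises; if desired, one can bypass $f$ entirely by repeating the proof of Proposition \ref{lemma2} verbatim with these ingredients.
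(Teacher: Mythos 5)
Your argument is correct and follows essentially the same route as the paper, which proves the corollary by observing that $F$ satisfies \eqref{f_extra} and the critical growth bound \eqref{CRITG} (via Lemma \ref{basic_prop_ss}) and that the autonomous self-similar $F$ yields $F_0=F_+=F_-=F$, so that all four sums in \eqref{eqlemma2} collapse to the single sum over $\mathbb{N}_\ast$. Your extra care about invoking the a.e.\ derivative in the merely Lipschitz case is a harmless refinement of the same reduction to Proposition~\ref{lemma2}.
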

\begin{proof}
In this case $F(t)$ satisfies \eqref{f_extra} and \eqref{CRITG}, also $F=F_+ = F_{-}=F_0$.
\end{proof}
\begin{corollary}\label{brezis-lieb}
Let $u_k \rightharpoonup u$ in $\mathcal{D}^{s,2} (\mathbb{R}^N)$ and $F(t)$  be as in Corollary \ref{rightone} then, up to a subsequence,
\begin{equation*}
\lim _{k\rightarrow \infty}\int _{\mathbb{R}^N} F(u_k) - F(u - u_k) - F(u) \dx = 0.
\end{equation*}
\end{corollary}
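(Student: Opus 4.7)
The plan is to apply Corollary \ref{rightone} twice — once to $(u_k)$ and once to $(u_k-u)$ — and subtract. I interpret the integrand in the statement as $F(u_k) - F(u_k-u) - F(u)$, the customary Brezis--Lieb form.

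First I will invoke Theorem \ref{teo_tinta_frac} on the bounded sequence $(u_k)$ to obtain profiles $w^{(n)}$, $n\in\mathbb{N}_{\ast}$, and parameters $(y_k^{(n)})\subset\mathbb{Z}^N$, $(j_k^{(n)})\subset\mathbb{Z}$. Since $u_k\rightharpoonup u$ and $1\in\mathbb{N}_0$ with $y_k^{(1)}=0$ and $j_k^{(1)}=0$, assertion \eqref{seis.um} identifies $w^{(1)}=u$. With the shorthand $d_k^{(n)}v(x):=\gamma^{\frac{N-2s}{2}j_k^{(n)}}v(\gamma^{j_k^{(n)}}(x-y_k^{(n)}))$, the remainder identity \eqref{seis.quatro} rewrites as
\begin{equation*}
r_k := (u_k-u) - \sum_{n\in\mathbb{N}_{\ast},\,n>1} d_k^{(n)}w^{(n)} \to 0 \text{ in } L^{2_s^\ast}(\mathbb{R}^N),
\end{equation*}
with uniform convergence of the series in $k$. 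This expansion is itself a profile decomposition for the bounded sequence $(u_k-u)$, with trivial first profile and the remaining profiles unchanged; the orthogonality \eqref{seis.dois} is inherited from the original decomposition of $(u_k)$.

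Next I will apply Corollary \ref{rightone} to $(u_k)$ to get
\begin{equation*}
\lim_{k\to\infty}\int_{\mathbb{R}^N}F(u_k)\dx = \int_{\mathbb{R}^N}F(u)\dx + \sum_{n>1}\int_{\mathbb{R}^N}F(w^{(n)})\dx,
\end{equation*}
and apply it again to $(u_k-u)$, using the induced decomposition above (with zero first profile), to obtain
\begin{equation*}
\lim_{k\to\infty}\int_{\mathbb{R}^N}F(u_k-u)\dx = \sum_{n>1}\int_{\mathbb{R}^N}F(w^{(n)})\dx.
\end{equation*}
Subtracting the two limits then yields the Brezis--Lieb identity along the chosen subsequence.

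The only delicate step will be the second application of Corollary \ref{rightone}, since I am using the decomposition of $(u_k-u)$ inherited from that of $(u_k)$ rather than the one Theorem \ref{teo_tinta_frac} would produce if re-run on $(u_k-u)$. I will settle this either by appealing to the uniqueness of profile decomposition up to permutation (noted immediately after Theorem \ref{teo_tinta_frac}), or by inspecting the proof of Proposition \ref{lemma2} directly: it uses only the formal structure of the decomposition — $L^{2_s^\ast}$-convergence of the remainder, the orthogonality \eqref{seis.dois}, the asymptotic additivity \eqref{f_extra} (valid for locally Lipschitz self-similar $F$ by Lemma \ref{basic_prop_ss}\,(iii)), and the dilation invariance \eqref{absor} guaranteed by self-similarity \eqref{selfsimilar} — all of which hold for the inherited decomposition.
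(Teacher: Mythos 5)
Your proof is correct and takes essentially the same route as the paper: apply Corollary \ref{rightone} to $(u_k)$ (giving \eqref{lemaUM}), observe that $w^{(1)}=u$ so \eqref{seis.quatro} yields an inherited decomposition for $(u_k-u)$ with trivial first profile, apply Corollary \ref{rightone} again (giving \eqref{lemaDOIS}), and subtract. The only difference is that you explicitly flag and justify the legitimacy of reusing the inherited decomposition for $(u_k-u)$, a point the paper's two-line proof passes over silently.
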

\begin{proof}
Since $w^{(1)} = u,$ by \eqref{seis.quatro} and Corollary \ref{rightone} we have
\begin{equation}\label{lemaDOIS}
\lim _{k\rightarrow \infty} \int _{\mathbb{R}^N} F(u_k - u) \dx = \sum _{n\in \mathbb{N}_\ast,n>1} \int _{\mathbb{R}^N} F(w^{(n)}) \dx.
\end{equation}
Taking the difference between \eqref{lemaUM} and \eqref{lemaDOIS} we get the desired result.
\end{proof}
\section{The autonomous case}\label{asfec}
The aim of this section is to prove Theorems \ref{primeira}, \ref{segunda} and \ref{minimizacao}.
\begin{remark}\label{1rmax+-}
	By embedding \eqref{embds}, we have $\mathcal{S}_l < \infty.$ Also $\mathcal{S} _l$ is attained for some $l$ if and only if it is attained for all $l.$ Indeed, this can be checked by considering the rescaling $v(x)=u_1 (l^{-1/(N-2s)} x)$ and $u(x)=v_l (l^{1/(N-2s)} x),$ where $\|u_1\|=1$ and $\|v_l\|=l$ respectively. In particular,  
	\begin{equation}\label{truque}
		l ^\frac{N}{N-2s} \mathcal{S} _1 = \mathcal{S} _l.
	\end{equation}
\end{remark}
\subsection{Proof of Theorem~\ref{primeira}}
\begin{proof}
Suppose that $F(t)$ is self-similar and satisfies \eqref{posi_algum_auto}. Let $(u_k)\subset{\mathcal{D}^{s,2} (\mathbb{R}^N) }$ be a maximizing sequence for \eqref{max} with $l=1$, that is, $\|u_k\| ^2 = 1$ and $\Phi(u_k) \rightarrow \mathcal{S} _1.$ Let $(w ^{(n)}) _{n \in \mathbb{N}_{\ast } }$ in $\mathcal{D}^{s,2}(\mathbb{R}^N),$ $(y_k ^{(n)}) _ {k \in \mathbb{N}}$ in $\mathbb{Z}^N,$ and $(j _k ^{(n)}) _{k \in \mathbb{N}}$ in $\mathbb{Z},$ $n \in \mathbb{N}_{\ast },$ be the sequences provided by Theorem \ref{teo_tinta_frac}. By the Corollary \ref{rightone},
\begin{equation}\label{use1lemma1}
\mathcal{S} _1 = \lim _{k \rightarrow \infty} \Phi (u_k) = \sum _{n \in\mathbb{N}_{\ast }} \Phi (w^{(n)}),
\end{equation}
and at the same time by assertion \eqref{seis.tres},
\begin{equation}\label{use1seis.tres}
\sum _{n \in \mathbb{N}_{\ast}}\| w^{(n)} \|^2 \leq \limsup_k \|u_k \| ^2 \leq 1.
\end{equation}
The identity \eqref{use1lemma1} also implies that there exists $n \in \mathbb{N} _\ast$ with $w^{(n)}\neq 0.$ We may write $v ^{(n)}(x) = w ^{(n)} (\tau _n x )$ where $\tau _n = \|w^{(n)}\| ^{2/(N-2s)}.$ Consequently $\|v^{(n)}\|  ^2 =1,$ $\Phi (v^{(n)}) \leq \mathcal{S} _1$ and
\begin{equation*}\label{reuse1lemma1}
\mathcal{S} _1 = \sum _{n \in\mathbb{N}_{\ast }} \tau _n ^N \Phi (v^{(n)}) \leq \mathcal{S} _1 \sum _{n \in\mathbb{N}_{\ast }} \tau _n ^N.
\end{equation*}
Moreover,
\begin{equation}\label{sum1}
1 \leq \sum _{n \in\mathbb{N}_{\ast }} \tau _n ^N.
\end{equation}
From \eqref{use1seis.tres} we have
\begin{equation}\label{sum2}
\sum _{n \in\mathbb{N}_{\ast }} \tau _n ^{N-2s}\leq 1.
\end{equation}
Relations \eqref{sum1} and \eqref{sum2} can hold simultaneous provided that there is a $n_0 \in \mathbb{N_{\ast }}$ such that $\tau _{n_0} =1,$ while $\tau _n=0,$ whenever $n \neq n_0.$ Therefore, by \eqref{seis.quatro} we obtain 
\begin{equation*}
u_k - \gamma ^{\frac{N-2s}{2} j_k ^{(n_0)} } w^{(n_0)}(\gamma^{j _k ^{(n_0)}} ( \cdot - y_k^{(n_0)} ) ) \rightarrow 0\text{ in }L^{2^{\ast } _s} (\mathbb{R}^N).
\end{equation*}
Since $F(t)$ is self-similar, the sequence 
\begin{equation*}
v_k=\gamma ^{-\frac{N-2s}{2}j_k ^{(n_0)}}u_k (\gamma ^{- j_k ^{(n_0)}}x  + y_k^{(n_0)} ),
\end{equation*}
is a maximizing sequence for \eqref{max} and $v_k \rightarrow w^{(n_0)}$ in $L^{2^{\ast } _s} (\mathbb{R}^N).$ Furthermore, the continuity of $\Phi$ in $L^{2^{\ast } _s} (\mathbb{R}^N)$ implies $\Phi (w^{n_0}) = \mathcal{S} _1,$ and since $\| w^{(n_0)} \|^2  =1,\ w^{(n_0)}$ is a maximizer.\\
Consider now the case where $\mathcal{S} _1> \max \{\mathcal{S}_{1,+},\mathcal{S}_{1,-}\}.$ Let again $(u_k)$ be a maximizing sequence for \eqref{max} with $l=1.$ Since $f(t)$ verify \eqref{f_5} we can apply to Proposition \ref{lemma2} to get
\begin{equation}
\mathcal{S}_1 = \lim _{k \rightarrow \infty } \Phi (u _k) = \sum_{n \in \mathbb{N} _0} \Phi (w^{(n)}) + \sum_{n \in \mathbb{N} _{- \infty}} \Phi _{-} (w^{(n)}) + \sum_{n \in \mathbb{N} _{+ \infty}} \Phi _{+} (w^{(n)}),
\end{equation}
where $(w ^{(n)}),$ $(y_k ^{(n)}),$ $(j _k ^{(n)}),$ $n \in \mathbb{N}_{\ast },$ are given by Theorem \ref{teo_tinta_frac}. Considering again $v^{(n)}(x) = w^{(n)}(\tau _n x),$ with $\tau _n = \|w^{(n)}\| ^{2/(N-2s)},$ we get
\begin{equation}\label{kappapm}
1 \leq \sum_{n \in \mathbb{N} _0} \tau _n ^N + \frac{\mathcal{S}_{1,-}}{\mathcal{S} _1} \sum_{n \in \mathbb{N} _{- \infty}} \tau _n ^N + \frac{\mathcal{S}_{1,+}}{\mathcal{S} _1}\sum_{n \in \mathbb{N} _{+ \infty}} \tau _n ^N.
\end{equation}
Since $\mathcal{S} _{1, +} / \mathcal{S} _1 < 1$ and $\mathcal{S} _{1, -} / \mathcal{S} _1 < 1$ by assertion \eqref{seis.tres}, inequalities \eqref{use1seis.tres} and \eqref{kappapm} can hold simultaneously if and only if there is a $n _0 \in \mathbb{N} _0$ such that $\tau _{n _0} =1,$ while $\tau _n =0,$ whenever $n \neq n _0.$ Therefore, by assertion \eqref{seis.quatro} $u_k -w^{(n _0)} (\cdot - y _k ^{(n_0)}) \rightarrow 0$ in $L ^{2^{\ast} _s} (\mathbb{R}^N)$ and using a similar argument as in the previous case, we conclude that $w^{(n_0)}$ is a maximizer.
\end{proof}
\begin{remark}\label{2rmax+-}
	One always has $\mathcal{S} _1 \geq \max \{\mathcal{S}_{1,+},\mathcal{S}_{1,-}\}.$ Indeed, as discussed above, it suffices to prove this in the case that $l=1,$ so let $u \in \mathcal{D}^{s,2}{(\mathbb{R}^N)}$ with $\|u\|=1$ and $v_j := \delta _j u,$ where $\delta_j$ is given in \eqref{dilat_group}, and $j \in \mathbb{Z}$. Then $\|v_j\|=1$ implies that  $\Phi (v_j) \leq \mathcal{S} _1,$ and by condition \eqref{f_5} we conclude $\Phi (v_j) \rightarrow \Phi _{+} (u)$ as $j \rightarrow +\infty.$ The case for the inequality $\mathcal{S}_1\geq \mathcal{S}_{1,-}$ follows by using the same argument. Moreover, the inequality $\mathcal{S} _1 > \max \{\mathcal{S}_{1,+},\mathcal{S}_{1,-}\}$ holds whenever $F\geq F_{+}$ and $F\geq F_{-}$ with the strict inequality in a neighborhood of zero. In fact, since $F_+$ and $F_{-}$ are self-similar, we may consider $w _+ $ and $w_{-}$ the maximizers of $\mathcal{S} _{l, +}$ and $\mathcal{S} _{l, -},$ respectively, to obtain, by Theorem \ref{segunda}, Proposition \ref{prop_pohozaev} and Remark \ref{1rmax+-}, that $\mathcal{S} _{l, +} < \Phi (w_+) \leq \mathcal{S}_l$ and $\mathcal{S} _{l, -} < \Phi (w_{-}) \leq \mathcal{S}_l.$
\end{remark}
\subsection{Characterization of the minimax level}
We pass now to the study of the minimax level of the Lagrangian associated with Eq.~$\eqref{P},$ proving some useful results. This is made by considering the class of paths
$\zeta: [0,+\infty) \rightarrow \mathcal{D}^{s,2}(\mathbb{R}^N) $ defined by $\zeta_u(t)(x)=u( x /t)$
for any $u \in \mathcal{D}^{s,2}(\mathbb{R}^N),$ because of its homogeneous property with respect to the norm in $\mathcal{D}^{s,2}(\mathbb{R}^N).$
\begin{lemma}\label{path}
Suppose that $F(t)$ satisfies the growing condition \eqref{CRITG}. If $u \in \mathcal{D}^{s,2} (\mathbb{R}^N)$ is such that $\Phi(u)>0,$ then the path $\zeta _u$ belongs to $\Gamma _I.$ Thus $\Gamma _I \neq \emptyset$ if and only if \eqref{posi_algum_auto} holds.
\end{lemma}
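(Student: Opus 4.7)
The plan is to reduce the statement to a direct scaling calculation. For $u \in \mathcal{D}^{s,2}(\mathbb{R}^N)$ and $t>0$, setting $v(x)=u(x/t)$, I would use the fractional scaling identity \eqref{frac_id} to get $(-\Delta)^{s/2}v(x) = t^{-s}\bigl((-\Delta)^{s/2}u\bigr)(x/t)$, and then apply a change of variables to obtain $\|v\|^2 = t^{N-2s}\|u\|^2$ and $\int_{\mathbb{R}^N} F(v)\dx = t^N\Phi(u)$ (the latter integral being finite thanks to \eqref{CRITG} together with the embedding \eqref{embds}). This yields the explicit formula
\begin{equation*}
I(\zeta_u(t)) = \tfrac{1}{2}t^{N-2s}\|u\|^2 - t^N \Phi(u).
\end{equation*}

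From this formula the first assertion follows at once. Since $0 < N-2s < N$ (recall $s<N/2$) and $\Phi(u)>0$, the term $-t^N\Phi(u)$ dominates and forces $I(\zeta_u(t))\to-\infty$ as $t\to\infty$. Setting $\zeta_u(0):=0$, continuity at $t=0$ is immediate from $\|\zeta_u(t)\|^2 = t^{N-2s}\|u\|^2\to 0$, and continuity on $(0,\infty)$ follows from the continuity of the dilation action on $\mathcal{D}^{s,2}(\mathbb{R}^N)$ (a continuous-parameter variant of Lemma \ref{op_conv}, proved by an approximation argument in Fourier variables). Hence $\zeta_u\in \Gamma_I$.

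For the equivalence, the forward direction is essentially already observed: if \eqref{posi_algum_auto} holds, Remark \ref{remark_positivo} exhibits some $\varphi_R \in \mathcal{D}^{s,2}(\mathbb{R}^N)$ with $\Phi(\varphi_R)>0$, and the first part of the lemma then gives $\zeta_{\varphi_R}\in \Gamma_I$, so $\Gamma_I\neq\emptyset$. For the converse I would argue by contraposition: if \eqref{posi_algum_auto} fails, then $F(t)\leq 0$ for every $t\in\mathbb{R}$, hence $\Phi(u)\leq 0$ for every $u\in\mathcal{D}^{s,2}(\mathbb{R}^N)$, which in turn gives $I(u)\geq \tfrac{1}{2}\|u\|^2\geq 0$ throughout the space; no continuous path can then satisfy $\lim_{t\to\infty} I(\gamma(t)) = -\infty$, so $\Gamma_I=\emptyset$. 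The only mildly technical point, and the one I would be most careful about, is justifying the strong continuity of $t\mapsto \zeta_u(t)$ in $\mathcal{D}^{s,2}(\mathbb{R}^N)$; everything else is a routine consequence of the scaling formula.
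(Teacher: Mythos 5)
Your proposal is correct and follows essentially the same route as the paper: the scaling identity $I(\zeta_u(t)) = \tfrac12 t^{N-2s}\|u\|^2 - t^N\Phi(u)$, the domination of the $-t^N\Phi(u)$ term, and the contrapositive for the equivalence are all the paper's own argument. The one cosmetic difference is the continuity of $t\mapsto\zeta_u(t)$ on $(0,\infty)$: the paper computes $\|\zeta_u(t_n)-\zeta_u(t_0)\|^2$ directly on $\mathscr{S}_0$ via \eqref{formula} and dominated convergence, then extends by density, whereas you invoke (a variant of) Lemma~\ref{op_conv}; note that Lemma~\ref{op_conv} already allows $j\in\mathbb{R}$, so your only residual task is to factor $\zeta_u(t)=t^{\frac{N-2s}{2}}\,\delta_{-\log_\gamma t}\,u$ and use continuity of the scalar prefactor, which is routine.
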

\begin{proof}
Let $t_n, t_0 >0,$ $n \in \mathbb{N},$ be such that $t_n \rightarrow t_0$ and $u \in \mathscr{S}_0.$ Since
\begin{equation}\label{homog}
\|\zeta _u (t)\|^2 = t^{N-2s} \|u\|^2, \; \forall t>0,
\end{equation}
using \eqref{frac_id} we have
\begin{multline}\label{leminha}
\|\zeta _u (t_n) - \zeta_u (t_0)\|^2 = \\ t_n ^{N-2s} \|u\|^2  -2t_n^{-s} t_0 ^{-s}\int _{\mathbb{R}^N} (-\Delta )^{s/2} u ( x/t_n ) (-\Delta )^{s/2} u ( x/t_0 ) \dx  + t_0 ^{N-2s} \|u\|^2.
\end{multline}
Also, up to a set of Lebesgue measure zero, by identity \eqref{formula} we obtain
\begin{align*}
\left| (-\Delta )^{s} u (x/t_0 ) u ( x / t_n ) \right| &\leq \|u\| _{L^{\infty} (\mathbb{R}^N)} \left|(-\Delta )^{s} u ( x/t_0 ) \right|, \\
\lim _{n \rightarrow \infty} (-\Delta )^{s} u ( x/t_0 ) u ( x/t_n ) &= (-\Delta )^{s} u ( x/t_0 ) u ( x/t_0 ),\; \forall  x\in \mathbb{R}^N.
\end{align*}
Thus by the Dominated Convergence Theorem the left-hand side of the identity \eqref{leminha} goes to zero as $n \rightarrow \infty.$ By identity \eqref{homog} we conclude $\zeta _u \in C([0, \infty ),  \mathcal{D}^{s,2} (\mathbb{R}^N) ).$ The general case follows by a density argument.\\
Now suppose that \eqref{posi_algum_auto} holds. Then there exists $u \in C_0 ^{\infty} (\mathbb{R}^N)$ such that $\Phi (u) > 0 $ and consequently $\zeta _u \in \Gamma _I,$ since
\begin{align*}
I(\zeta _u (t)) = \frac{1}{2}t^{N-2s}\|u\|^2 - t^N \Phi(u) \rightarrow - \infty \text{ as } t \rightarrow \infty.
\end{align*}
Conversely, assume that $\Gamma _I \neq \emptyset.$ If \eqref{posi_algum_auto} does not hold, then we would have that $I(u) \geq 0,$ for all $u \in \mathcal{D}^{s,2} (\mathbb{R}^N). $ Hence $\Gamma _I = \emptyset,$ which is impossible.
\end{proof}
\begin{remark}
Let $u \in \mathcal{D}^{s,2} (\mathbb{R}^N)$ be such that $\Phi (u) >0.$ Then
\begin{equation}\label{zeros}
\max _{t \geq 0}I(\zeta _u (t)) = \frac{1}{2}\left(\frac{\|u\|^2}{2^\ast _s \Phi (u)} \right) ^{\frac{N-2s}{2s}} \| u \| ^2 - \left( \frac{\|u \| ^2}{ 2 ^\ast _s \Phi (u) } \right) ^{N/2s} \Phi (u).
\end{equation}
\end{remark}
\begin{lemma}\label{novolemma}
Assume that conditions \eqref{posi_algum_auto} and \eqref{CRITG} holds. Consider
\begin{equation*}
 \tilde{c}(I) := \inf _{\zeta \in \tilde{\Gamma}_I} \sup_{t \geq 0} I(\zeta (t)).
\end{equation*}
where
\begin{equation*}
\tilde{\Gamma}_I := \{\zeta \in \Gamma_I : \zeta = \zeta_u\ \text{ for some }u \in \mathcal{D}^{s,2}(\mathbb{R}^N) \text{ with } \Phi(u)>0 \}
\end{equation*}
Then $c(I)=\tilde{c}(I).$
\end{lemma}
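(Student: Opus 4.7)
My plan is to prove the two inequalities $c(I) \leq \tilde c(I)$ and $\tilde c(I) \leq c(I)$ separately. The first is immediate: by Lemma~\ref{path}, every path $\zeta_u$ with $\Phi(u)>0$ lies in $\Gamma_I$, so $\tilde\Gamma_I \subset \Gamma_I$ and hence $c(I) \leq \tilde c(I)$. The substantive content is the reverse inequality. For this, given an arbitrary $\gamma \in \Gamma_I$, I would locate a point $u_0 = \gamma(t_\ast)$ on the curve whose rescaling path $\zeta_{u_0}$ attains its maximum at $\tau = 1$, so that
\[
\max_{\tau \geq 0} I(\zeta_{u_0}(\tau)) = I(u_0) \leq \sup_{t \geq 0} I(\gamma(t));
\]
taking the infimum over $\gamma \in \Gamma_I$ would then give $\tilde c(I) \leq c(I)$.

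To pin down the right $t_\ast$, I would invoke the explicit formula \eqref{zeros}: the maximum of $I(\zeta_u(\tau))$ occurs at $\tau^{2s} = \|u\|^2/(2^\ast_s \Phi(u))$, which equals $1$ precisely when $u$ satisfies the Pohozaev-type balance $\|u\|^2 = 2^\ast_s \Phi(u)$. The task therefore reduces to finding a nontrivial zero of the continuous function
\[
\phi(t) := 2^\ast_s \Phi(\gamma(t)) - \|\gamma(t)\|^2.
\]
For an IVT argument I would verify two sign behaviors. For a suitably large $t_1$, the condition $I(\gamma(t)) \to -\infty$ produces $I(\gamma(t_1)) < 0$, which forces $\gamma(t_1) \neq 0$ and $\Phi(\gamma(t_1)) > \tfrac{1}{2}\|\gamma(t_1)\|^2$; hence $\phi(t_1) \geq \tfrac{2^\ast_s - 2}{2}\|\gamma(t_1)\|^2 > 0$ since $2^\ast_s > 2$. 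On the other side, because $\gamma$ is continuous with $\gamma(0)=0$ and is not identically zero, I can pick $t_a \in (0,t_1)$ with $\gamma(t_a) \neq 0$ and $\|\gamma(t_a)\|$ arbitrarily small; the critical growth \eqref{CRITG} together with the embedding $\mathcal{D}^{s,2}(\mathbb{R}^N)\hookrightarrow L^{2^\ast_s}(\mathbb{R}^N)$ then gives $\Phi(\gamma(t_a)) = O(\|\gamma(t_a)\|^{2^\ast_s})$ and so $\phi(t_a) < 0$.

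I would then set $t_\ast := \sup\{t \in [t_a, t_1] : \phi(t) \leq 0\}$, so that by continuity $\phi(t_\ast) = 0$ while $\phi(t) > 0$ strictly on $(t_\ast, t_1]$. The principal obstacle is to exclude the trivial possibility $\gamma(t_\ast) = 0$ (for instance, $\gamma$ could vanish on an initial interval, rendering any naive IVT useless); I would handle this using continuity once more: if $\gamma(t_\ast) = 0$, then $\gamma(t)$ would be small for $t$ just to the right of $t_\ast$, and the small-norm estimate from the previous step would give $\phi(t) \leq 0$ on a right neighborhood of $t_\ast$, contradicting $\phi > 0$ there. Consequently $u_0 := \gamma(t_\ast) \neq 0$ with $\Phi(u_0) = \|u_0\|^2/2^\ast_s > 0$, so $\zeta_{u_0} \in \tilde\Gamma_I$ and $\max_\tau I(\zeta_{u_0}(\tau)) = I(u_0) \leq \sup_t I(\gamma(t))$, closing the plan.
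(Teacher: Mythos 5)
Your proof is correct and follows essentially the same route as the paper: both arguments locate a nontrivial point $u_0$ on the competitor path where the Pohozaev balance $\|u_0\|^2 = 2^\ast_s \Phi(u_0)$ holds, and then invoke \eqref{zeros} to see that $\max_{\tau\geq 0} I(\zeta_{u_0}(\tau)) = I(u_0) \leq \sup_t I(\gamma(t))$. The paper phrases this as a proof by contradiction and merely asserts that the quantity $\tfrac12\|\zeta(t)\|^2 - \tfrac{2^\ast_s}{2}\Phi(\zeta(t))$ changes sign with a nontrivial zero, whereas you justify the sign change via the small-norm estimate from \eqref{CRITG} and the large-$t$ behavior of $I$, and explicitly rule out the degenerate case $\gamma(t_\ast)=0$ (e.g. when $\gamma$ vanishes on an initial interval), which the paper glosses over.
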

\begin{proof}
	Since $\tilde{\Gamma}_I \subset \Gamma_I$ we have $c(I) \leq \tilde{c}(I).$ Suppose the contrary, that $c(I) < \tilde{c}(I).$ Then, there exists $\zeta \in \Gamma_I$ such that $c(I) \leq \sup _{t \geq 0} I(\zeta (t)) < \tilde{c}(I).$ 	Observe now that, by \eqref{embds} and \eqref{f_2}, the continuous function
	\begin{equation*}
	h(t) = \frac{1}{2} \| \zeta (t) \| ^2 - \frac{2 ^\ast _s}{2} \Phi (\zeta (t)), \ t>0,
	\end{equation*}
	changes sign. Hence, there exists $t_0>0$ such that $g(t_0) = 0$ and $\zeta(t_0) \neq 0,$ which implies that $\| \zeta (t_0) \| ^2 = 2^\ast _s \Phi (\zeta (t_0)).$ Now taking $u= \zeta (t_0)$ in \eqref{zeros} we get
	\begin{equation*}
	\sup _{t\geq 0} I(\zeta _u (t) ) =  \frac{1}{2} \| \zeta (t_0) \| ^2 - \Phi (\zeta (t_0)) \leq \sup _{t \geq 0} I(\zeta (t)),
	\end{equation*}
	which leads to a contradiction with the definition of $\tilde{c} (I).$
\end{proof}
\begin{remark}\label{remark_minimax} In order to prove our nonlocal counterpart of 
\cite[Proposition 2.4]{tintapaper}, we have to reduce the class of admissible paths. This is made by noticing that
\begin{equation*}
\sup_{t \geq 0}I(\zeta_v (t)) = \sup_{t \geq 0}I(\zeta_{v_\sigma} (t)),
\end{equation*}
for any rescaling $v_\sigma (x) = v(x/\sigma),\ \sigma > 0,$ and taking account the set
\begin{equation*}
\tilde{\Gamma}^1_I := \left\lbrace \zeta \in \Gamma_I : \zeta = \zeta_u\ \text{ for some }u \in \mathcal{D}^{s,2}(\mathbb{R}^N) \text{ with } \Phi(u)>0\text{ and }\|u\|\geq 1 \right\rbrace ,
\end{equation*}
and the associated minimax level
\begin{equation*}
\tilde{c}_1 (I) := \inf _{\zeta \in \tilde{\Gamma}^1_I} \sup_{t \geq 0} I(\zeta (t)),
\end{equation*}
to obtain that $ \tilde{c} (I)= \tilde{c}_1 (I).$
\end{remark}
\subsection{Proof of Theorem~\ref{segunda}}
\begin{proof}
\ref{item1a} Let $ v \in  \mathcal{D}^{s,2} (\mathbb{R}^N) $ be a non-trivial critical point of $I.$ By Proposition \ref{prop_pohozaev} we have $\zeta _v \in \Gamma_I$ and $t=1$ is a maximum point for the function $t \mapsto I (\zeta _v (t)) = t ^{N-2s} \| v \|^2 /2- t^N \Phi (v).$ Hence $c(I) \leq \max _{t \geq 0} I( \zeta _v (t)) = I(v).$\\
\ref{item2a} Since $w$ is a maximizer for \eqref{max} we have
\begin{equation}\label{lagrange}
\int _{\mathbb{R}^N} f(w) v \dx=2\lambda \int _{\mathbb{R}^N} (-\Delta )^{s/2} w (-\Delta )^{s/2} v \dx, \; \forall  v \in \mathcal{D}^{s,2} (\mathbb{R}^N),
\end{equation}
where $\lambda$ is a Lagrange multiplier. We claim that $\lambda \neq 0.$ Indeed, on the contrary, we get $f(w)=0$ a.e in $\mathbb{R}^N,$ which leads to a contradiction with $\Phi(w)>0.$ Thus, we can apply Proposition \ref{prop_pohozaev} to get
\begin{equation*}
2\lambda \|w\| ^2 =2^\ast _s \int _{\mathbb{R}^N} F(w) \dx,
\end{equation*}
which together with relation \eqref{truque} implies $2\lambda l_0 = 2 ^\ast_s S _1 l_0 ^{N/(N-2s)},$ and the explicit value of $l_0$ gives $\lambda=1.$ In particular,
\begin{equation*}
I(w)=\left(\frac{1}{2} - \frac{1}{2_s ^\ast} \right)\| w \| ^2>0.
\end{equation*}
Let us prove now the last statement of \ref{item2a}. By the part \ref{item1a}, it is sufficient to prove that $I(w) \leq c(I).$ Let $u \in \mathcal{D}^{s,2}(\mathbb{R}^N)$ with $\Phi(u) >0,$ and denote $\tilde{u}(x) = u (\alpha x)$ where $\alpha = \|u\| ^{2/(N-2s)}.$ Then $\| \tilde{u} \| = 1$ and consequently
\begin{equation*}
\Phi (\zeta _u (t)) = \Phi (\zeta _{\tilde{u} }(t \alpha ) ) \leq \|\zeta _u (t) \|^{\frac{2N}{N-2s}} \mathcal{S}_1, \; \forall t\geq 0,
\end{equation*}
from which we can deduce, by Lemma \ref{novolemma} and Remark \ref{remark_minimax}, that
\begin{align*} 
{c} (I) &= \inf _{\Phi (u) >0,\atop{  \|u\|\geq 1 }} \sup _{t \geq 0} \frac{1}{2} \|\zeta _u (t)\|^2 - \Phi (\zeta _u (t))\\
&\geq \inf _{\Phi (u) >0,\atop{  \|u\|\geq 1 }} \sup _{t \geq 0}\frac{1}{2}\|\zeta _u (t)\|^2 - \|\zeta _u (t) \| ^{\frac{2N}{N-2s}} \mathcal{S} _1.
\end{align*}
Moreover, we have
\begin{multline*}
\sup _{t \geq 0}\frac{1}{2}\|\zeta _u (t)\|^2 - \|\zeta _u (t) \| ^{\frac{2N}{N-2s}} \mathcal{S} _1 \\ = \left[\frac{1}{2} (2_s ^\ast \mathcal{S}_1 ) ^{-\frac{N-2s}{2s} } - \mathcal{S}_1 (2^\ast _s \mathcal{S} _1)^{-\frac{N}{2s} }   \right]\|u\|^{2_s ^\ast (1-s)},\; \forall
 u\in D^{s,2} (\mathbb{R}^N)\text{ with }\Phi(u)>0.
\end{multline*}
Consequently,
\begin{equation*}
\inf _{\Phi (u) >0,\atop{  \|u\|\geq 1 }} \sup _{t \geq 0}\frac{1}{2}\|\zeta _u (t)\|^2 - \|\zeta _u (t) \| ^{\frac{2N}{N-2s}} \mathcal{S} _1 = \frac{1}{2} (2_s ^\ast \mathcal{S}_1 ) ^{-\frac{N-2s}{2s} } - \mathcal{S}_1 (2^\ast _s \mathcal{S} _1)^{-\frac{N}{2s} }.
\end{equation*}
On the other hand, by the explicit value of $l_0$ and relation \eqref{truque} we have that
\begin{equation*}
I(w) = \frac{1}{2} (2_s ^\ast \mathcal{S}_1 ) ^{-\frac{N-2s}{2s} } - \mathcal{S}_1 (2^\ast _s \mathcal{S} _1)^{-1\frac{N}{2s} }.
\end{equation*}
Thus $c(I) = I(w)$ and by the proof of the statement \ref{item1a}, the path $\zeta _w \in \Gamma_I$ is minimal.
\end{proof}
\subsection{Proof of Theorem~\ref{minimizacao}}
\begin{proof}
We start by noting that the embedding \eqref{embds} together with condition \eqref{f_2} implies $\mathcal{I}>0.$ Let $(u_k)$ be a minimizing sequence, that is, $\Phi(u_k ) =1$ and $\|u_k \|^2 \rightarrow \mathcal{I}.$ Since this sequence is bounded, we may apply Theorem \ref{teo_tinta_frac} to obtain the weak profile described in \eqref{seis.um}--\eqref{seis.quatro}. By the Corollary \ref{rightone}, we have
\begin{equation*}
1= \sum _{n \in \mathbb{N} _{\ast }} \int _{\mathbb{R}^N} F(w^{(n)}) \dx,
\end{equation*}
which implies that there exists $n \in \mathbb{N} _\ast$ with $0<\Phi (w^{(n)})\leq 1.$ If $\Phi (w^{(n)}) =1,$ considering $d_k$ as the element of $D_{\mathbb{Z}^N, \mathbb{R}}$ given by assertion \eqref{seis.um}, we have by the weak lower semi-continuity of the norm that
\begin{equation*}
\mathcal{I} \leq \|w^{(n)} \| ^2 \leq \liminf _k \|d^\ast _k u_k \|^2 = \mathcal{I}\quad\text{and}\quad\|d^\ast _ku_k \| ^2 = \| u_k \|^2 \rightarrow \|w^{(n)} \| ^2,
\end{equation*}
which proves the first part of Theorem \ref{minimizacao}. Hence, let us assume that $\Phi (w^{(n)})< 1.$ Set $v_k = d_k ^\ast u_k -w,$ where $w=w^{(n)}.$ By Corollary \ref{brezis-lieb} we have
\begin{equation}\label{Klarge}
\lim_{k \rightarrow \infty}\left[   1 - \int _{\mathbb{R}^N} F(v_k) \dx \right] = \int _{\mathbb{R}^N} F(w) \dx 
\end{equation}
Denote $\delta =\Phi(w)$ and set $\hat{w}(x)=w(\delta ^{1/N} x).$ Thus $\Phi (\hat{w}) =1$ and consequently
\begin{equation}\label{min1}
\|w \|^2= \delta ^{\frac{N-2s}{N}} \| \hat{w}\|^2 \geq \delta ^{\frac{N-2s}{N}} \mathcal{I}.
\end{equation}
Now consider
\begin{equation*}
\hat {v} _k (x)= v_k (|1- \delta |^{1/N} \beta _k ^{1/N} x ),\text{ where }\beta _k = \Phi (b_k)\text{ and }b_k(x)=v_k (|1-\delta|^{1/N} x ).
\end{equation*}
Since $\beta _k = |1-\delta |^{-1}\Phi(v_k),$ by convergence \eqref{Klarge} we have $\beta _k \rightarrow 1,$ and we conclude $\Phi (\hat {v} _k) =1$ for large $k.$ This leads to
\begin{equation}\label{min2}
\| v_k \|^2 = |1-\delta | ^{\frac{N-2s}{N}} \beta _k ^{\frac{N-2s}{N}} \| \hat {v} _k  \| ^2 \geq |1-\delta | ^{\frac{N-2s}{N}} \beta _k ^{\frac{N-2s}{N}} \mathcal{I},
\end{equation}
for large $k.$ In the other hand, since $\| u_k \| ^2 = \|d_k ^\ast u_k \|^2,$ by relations \eqref{min1} and \eqref{min2} we may infer
\begin{align*}
\| u_k \| ^2 &= \| v_k \| ^2 +2(v_k, w) + \|w\|^2\\
&\geq \left(\delta ^{\frac{N-2s}{N}}+ |1-\delta | ^{\frac{N-2s}{N}}\beta _k ^{\frac{N-2s}{N}} \right) \mathcal{I},
\end{align*}
and passing to the limit we finally conclude
\begin{equation*}
1\geq \delta ^{1-\frac{2s}{N}} + |1-\delta | ^{1-\frac{2s}{N}},
\end{equation*}
which leads to a contradiction since $0<\delta <1.$ Thus $w$ is the minimizer in \eqref{min} and consequently we have
\begin{equation*}
\int _{\mathbb{R}^N} (-\Delta )^{s/2} w (-\Delta )^{s/2} v \dx = \lambda \int _{\mathbb{R}^N} f(w) v \dx, \; \forall v \in \mathcal{D}^{s,2} (\mathbb{R}^N),
\end{equation*}
where $\lambda \in \mathbb{R}$ is a Lagrange multiplier. Taking $v=w$ in the above identity we have $\lambda \neq 0,$ which allows us to apply Proposition \ref{prop_pohozaev} to get $\lambda = \mathcal{I}/2^\ast _s,$ which by an easy computation using identities \eqref{frac_id} leads us to conclude that $u(x) = w( x /\beta)$ is a non-trivial weak solution of Eq.~\eqref{P}, where $\beta = \lambda ^{1/2s}= (\mathcal{I}/ 2^\ast _s ) ^{1/2s}.$ \\Let us prove now that $u(x)=w(x / \beta )$ is a ground state solution of Eq.~\eqref{P}. We start by applying Proposition \ref{prop_pohozaev} again to obtain
\begin{equation}\label{gs1}
I(u) = \left(\frac{1}{2} -\frac{1}{2_s ^\ast}\right) \| u \| ^2 =\frac{s}{N} (2 _s ^\ast) ^{-\frac{N-2s}{2s}}\| w \| ^{N/s}.
\end{equation}
Now let $v \in \mathcal{D}^{s,2} (\mathbb{R}^N)$ be any non-trivial weak solution of Eq.~\eqref{P}. For any $\sigma>0$ denote $v _\sigma(x) = v(x / \sigma).$ Choose $\sigma$ such that $\Phi( v _\sigma) = 1,$ that is, $\sigma =  (\Phi (v))^{-1/N}.$ Replacing this value of $\Phi(v)$ in the identity $\|v \|^2 = 2 ^\ast _s \Phi (v),$ we get $\sigma  = (2^\ast _s )^{1/N} \| v \| ^{-2/N}.$ Consequently, we obtain
\begin{equation*}
\| v _\sigma \| ^2 = (2_s ^\ast ) ^{\frac{N-2s}{N}} (\|v\| ^2) ^{2s/N},
\end{equation*}
which implies
\begin{equation}\label{gs2}
I(v)=\frac{s}{N} \| v \| ^2 =\frac{s}{N} (2 _s ^\ast) ^{-\frac{N-2s}{2s}} \| v _\sigma \| ^{N/s}.
\end{equation}
Comparing identities \eqref{gs1} and \eqref{gs2}, we conclude that $I(u) \leq I(v),$ i.e, $u$ is a ground state solution for Eq.~\eqref{P}. 
\end{proof}
\section{The non-autonomous case}\label{tnac}
For the sake of discussion, we are going to compare the minimax level of the asymptotic functional $I_\kappa,$ with the minimax of the Lagrangian associated with Eq.~\eqref{P}, for $\kappa=0,+,-.$
\begin{proposition}\label{minimaxprop}
Suppose that \eqref{f_1}--\eqref{f_pohozaev} holds. If $F_0$ is self-similar or $(F_0)_\kappa(t) \leq F_\kappa (t),$ for all $t,$ $\kappa = +,-,$ then $c(I) \leq c(I_\kappa),$ for $\kappa= 0,+,-.$ Moreover, under these assumptions, \eqref{f'_6} implies \eqref{suficient}.
\end{proposition}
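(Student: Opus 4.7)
The plan is to compare the minimax levels by constructing, for each near-optimizer $\zeta_u$ of $c(I_\kappa)$, a translated or dilated family $\zeta_{v_n}$ whose $I$-supremum converges to the $I_\kappa$-supremum of the original path; the asymptote hypothesis \eqref{f_5} makes this passage possible, while the self-similar structure of $F_\pm$ coming from Remark~\ref{they_are_ss} keeps the $I_\kappa$-supremum invariant as $n$ varies. Throughout, by Lemma~\ref{novolemma} and Remark~\ref{remark_minimax} it suffices to work with paths of the form $\zeta_u$ with $\Phi(u) > 0$.

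For the inequality $c(I) \leq c(I_\kappa)$, I would fix $u$ with $\Phi_\kappa(u) > 0$ and split on $\kappa$. If $\kappa = 0$, set $v_n(x) := u(x - z_n)$ with $|z_n| \to \infty$; the change of variables $y = (x-z_n)/t$ combined with the uniform limit $F(x,t) \to F_0(t)$ from \eqref{f_5} (using \eqref{f_2} as a dominating function) yields $\Phi(\zeta_{v_n}(t)) \to \Phi_0(\zeta_u(t))$. If $\kappa = \pm$, set $v_n := \delta_{\pm n}u$; the analogous change of variables together with the corresponding limit in \eqref{f_5} and the self-similarity of $F_\pm$ give $\Phi(\zeta_{v_n}(t)) \to \Phi_\pm(\zeta_u(t))$. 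In both cases $\|v_n\| = \|u\|$, so $I(\zeta_{v_n}(\cdot))$ is a fixed multiple of $t^{N-2s}$ perturbed by a lower-order term; lifting the pointwise convergence to convergence of $\sup_{t \geq 0} I(\zeta_{v_n}(t))$ to $\sup_{t \geq 0} I_\kappa(\zeta_u(t))$, followed by infimizing over $u$, yields $c(I) \leq c(I_\kappa)$. The two clauses of the hypothesis provide alternative routes for $\kappa = 0$: direct dilation when $F_0$ is self-similar, or indirect comparison through the self-similar $(F_0)_\pm$ under $(F_0)_\kappa \leq F_\kappa$.

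For $\eqref{f'_6} \Rightarrow \eqref{suficient}$, the existence of $t_\kappa$ with $F_\kappa(t_\kappa) > 0$ follows directly from \eqref{posi_algum_auto} contained in $(f'_8)$. To obtain the strict inequality $c(I) < c(I_\kappa)$, I would select a maximizer $u_\kappa$ of $c(I_\kappa)$, furnished by Theorems~\ref{primeira}--\ref{segunda} since $F_\pm$ is self-similar (Remark~\ref{they_are_ss}) and $F_0$ is so under the self-similar branch of the hypothesis. Proposition~\ref{prop_pohozaev} places $u_\kappa$ in $C^1(\mathbb{R}^N) \cap L^\infty(\mathbb{R}^N)$. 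Now form $w_j := \delta_j u_\kappa$: self-similarity of $F_\kappa$ forces $\sup_{t \geq 0} I_\kappa(\zeta_{w_j}(t)) = c(I_\kappa)$ for every $j$, while $\|w_j\|_\infty = \gamma^{\frac{N-2s}{2}j}\|u_\kappa\|_\infty \to 0$ as $j \to -\infty$. For $j$ sufficiently negative that $\|w_j\|_\infty < \delta$, \eqref{f'_6} applies pointwise to $\zeta_{w_j}(t)$ for every $t > 0$, giving $I(\zeta_{w_j}(t)) < I_\kappa(\zeta_{w_j}(t))$ on all of $(0,\infty)$; noting that the maximum of each side is attained at a strictly positive $t$ (since $I(\zeta_u(t)) \sim \tfrac{1}{2} t^{N-2s} \|u\|^2$ for small $t > 0$ rules out the maximum at $t = 0$), the strict inequality passes to the suprema and produces $c(I) \leq \sup_{t \geq 0} I(\zeta_{w_j}(t)) < c(I_\kappa)$.

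The main obstacle is upgrading the pointwise-in-$t$ convergence $\Phi(\zeta_{v_n}(t)) \to \Phi_\kappa(\zeta_u(t))$ to convergence of the supremum over $t \geq 0$; this amounts to preventing the supremum of $I(\zeta_{v_n}(\cdot))$ from escaping to $t = \infty$ uniformly in $n$. The plan is to extract, from the asymptote \eqref{f_5} together with \eqref{f_2}, a uniform lower bound of the form $\Phi(\zeta_{v_n}(t)) \geq \beta t^N$ for $t$ beyond some fixed threshold and $n$ large, thereby confining the maximizing $t$ to a compact subset of $(0,\infty)$ and permitting the passage to the limit.
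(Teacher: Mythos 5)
Your plan for the inequality $c(I)\leq c(I_\kappa)$ is sound in outline and in fact somewhat cleaner than the paper's: you translate/dilate an arbitrary $u$ with $\Phi_\kappa(u)>0$ and pass the supremum to the limit, whereas the paper ties its approximating paths to the specific maximizer $w_\kappa$ of $\mathcal{S}_{l_0}^\kappa$ and is then forced into a delicate case distinction (using the alternative hypothesis $(F_0)_\kappa\leq F_\kappa$) when $\mathcal{S}_{l_0}^0$ is not attained. By invoking Lemma~\ref{novolemma} on near-optimizers rather than a maximizer you bypass that case distinction entirely, and the uniform lower bound you propose in the final paragraph is achievable by the usual truncation of $u$ in $L^{2_s^*}$ together with the uniform-in-$x$ limit in \eqref{f_5}. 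I would only caution that the claim $\Phi(\zeta_{v_n}(t))\geq\beta t^N$ ``for $t$ beyond some fixed threshold and $n$ large'' needs the truncation argument spelled out (since \eqref{f_5} is uniform only on compact $t'$-sets), but that is the same density step the paper invokes.

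The genuine gap is in the implication $\eqref{f'_6}\Rightarrow\eqref{suficient}$, specifically for $\kappa=0$. Your argument dilates $u_\kappa$ to shrink $\|\delta_j u_\kappa\|_\infty$ below $\delta$ and then applies the strict pointwise inequality of \eqref{f'_6}; this requires $F_\kappa$ to be self-similar (to keep $\sup_t I_\kappa(\zeta_{\delta_j u_\kappa}(t))=c(I_\kappa)$ under dilation) and a maximizer $u_\kappa$ of $\mathcal{S}_{l_0}^\kappa$ to exist. Both hold for $\kappa=\pm$ by Remark~\ref{they_are_ss}, and for $\kappa=0$ under the self-similar branch, but you explicitly leave open the alternative branch $(F_0)_\pm\leq F_\pm$ with $F_0$ not self-similar. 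There you can neither dilate (no invariance of $\Phi_0$) nor assume a maximizer $w_0$ of $\mathcal{S}_{l_0}^0$ exists. The paper closes this gap by a contradiction argument: assuming $\mathcal{S}_{l_0}^0$ is not attained forces $\mathcal{S}_{l_0}^0=\mathcal{S}_{l_0}^\pm$ (Theorem~\ref{primeira} plus Remarks~\ref{1rmax+-}, \ref{2rmax+-}), while letting $|x|\to\infty$ in \eqref{f'_6} yields $F_0\geq F_\pm$, so that $\mathcal{S}_{l_0}^0$ is in fact attained by the maximizer $w_\pm$ of $\mathcal{S}_{l_0}^\pm$, a contradiction. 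Once $w_0$ exists, the strict inequality follows from the direct comparison $F_0\leq F(x,\cdot)$ globally with strictness on $(-\delta,\delta)$. Your proposal would need to incorporate this, or an equivalent, to cover $\kappa=0$.

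A smaller issue: the dilation detour is actually unnecessary even for $\kappa=\pm$. Since \eqref{f'_6} is stated together with $F_\kappa(t)\leq F(x,t)$ for \emph{all} $t$, and $u_\kappa$ is continuous, nonzero, and in $L^{2_s^*}$, the set $\{x:0<|u_\kappa(x/t)|<\delta\}$ already has positive measure; so $\Phi_\kappa(\zeta_{u_\kappa}(t))<\Phi(\zeta_{u_\kappa}(t))$ directly, without sending $j\to-\infty$. This is what the paper does, and it sidesteps the $L^\infty(\mathbb{R}^N)$ bound you attribute to Proposition~\ref{prop_pohozaev} — that proposition only asserts $C^1(\mathbb{R}^N)$ regularity, and a global $L^\infty$ bound is an additional claim your dilation argument cannot do without.
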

\begin{proof}
Let be $\mathcal{S} _l^\kappa,$ the associated constrained maximum similar to \eqref{max} relative to the primitive $F_\kappa,$ precisely,
\begin{equation*}\label{maxc}
\mathcal{S}_l^\kappa = \sup _{\|u\|^2 =l} \int _{\mathbb{R}^N}F_\kappa (u ) \dx\quad \mbox{for}\quad \kappa=0,+,-.
\end{equation*}
For each $\kappa=+,-,$ the primitive of the nonlinearity  $F_\kappa$ is auto-similar, thus using Theorems \ref{primeira} and \ref{segunda}, we conclude that there exists $w_\kappa$ maximizer of $\mathcal{S} _{{l_0} }^\kappa $ such that 
\begin{equation*}
c(I _\kappa) = I_\kappa( w_ \kappa ) = \max _{t \geq 0} I_\kappa( \zeta _{w _\kappa } (t))>0.
\end{equation*}
	For each $\kappa=+,-,$ let us consider the sequence
	\begin{equation*}
	w ^{\kappa }_n (x) := \gamma ^{\frac{N-2s}{2} j ^\kappa_n} w _\kappa \left( \gamma ^{j^\kappa_n} x  \right) ,
	\end{equation*}
	where the sequence $(j^\kappa_n) \subset \mathbb{Z}$ is chosen in such a way that $j_n ^+ \rightarrow + \infty$ and  $j_n ^- \rightarrow - \infty$.
 Since for each $\kappa=+,-,$ 
	\begin{equation}\label{compac_path}
	\left|I( \zeta _{w^\kappa _n} (t)) -  I_\kappa( \zeta _{w _\kappa } (t) ) \right| \leq t^N \int _{\mathbb{R}^N }  \left| \gamma ^{-N j^\kappa_n} F \left(  \gamma ^{-j^\kappa_n} tx ,\gamma ^{\frac{N-2s}{2} j^\kappa_n} w_\kappa (x) \right) -   F_\kappa (w _\kappa(x)) \right| \dx,
	\end{equation}
	the uniformity assumption on the limits in \eqref{f_5}, guarantees (by a density argument) that 
	\begin{equation}\label{conv_compat}
	\lim_{n\rightarrow \infty}I( \zeta _{w^\kappa _n} (t)) = I_\kappa( \zeta _{w _\kappa } (t)),\quad \text{uniformly in compact sets of }\mathbb{R}.
	\end{equation}
We also have that the path $\zeta _{w^\kappa _n},$ $\kappa=+,-,$  belongs to $\Gamma_I,$ for $n$ large enough. In fact, by the uniformly convergence in $x$ of \eqref{f_5} and Proposition \ref{prop_pohozaev}, there exists $n_0>0$ such that
\begin{equation*}
\int _{\mathbb{R}^N }  \gamma ^{-N j^\kappa_n} F \left(  \gamma ^{-j^\kappa_n} tx ,\gamma ^{\frac{N-2s}{2} j^\kappa_n} w_\kappa (x) \right) \dx > \frac{1}{2}\int _{\mathbb{R}^N } F_\kappa(w_\kappa)\dx,\text{ for all }n>n_0\text{ and }t>0.
\end{equation*}
Thus, for each $n$ there exist $t_n>0$ such that
	\begin{equation*}
	I(\zeta _{w^\kappa _n} (t_n) )= \max_{t \geq 0} I(\zeta _{w^\kappa _n} (t))>0.
	\end{equation*}
We claim that the sequence $(t_n)$ is bounded. On the contrary, up to a subsequence, we get the following contradiction
\begin{equation*}
0<I( \zeta _{w^\kappa _n} (t_n))= \frac{1}{2} t_n^{N-2s} [w]_s ^2 -t_n^N \int_{\mathbb{R}^N}\gamma ^{-N j_n} F \left(  \gamma ^{-j_n} t_nx ,\gamma ^{\frac{N-2s}{2} j_n} w_\kappa (x) \right)\dx\\ \rightarrow - \infty,\text{ as }n\rightarrow \infty.
\end{equation*}
Therefore, up to a subsequence, $t_n\rightarrow t_0,$ and we have	
	\begin{equation*}
	\lim_{n\rightarrow \infty}\max _{t \geq 0} I(\zeta _{w^\kappa _n} (t_n))=I_{\kappa} (\zeta _{w _\kappa} (t_0)),
	\end{equation*}
	because of \eqref{conv_compat}. Thus we may conclude
	\begin{equation*}
	c(I) \leq \lim_{n\rightarrow \infty}\max _{t \geq 0} I(\zeta _{w^\kappa _n} (t)) \leq \max _{t \geq 0} I_\kappa( \zeta _{w _\kappa } (t)) = c(I _\kappa).
	\end{equation*}
	If there exists maximizer $w_0$ for $\mathcal{S} _{l_0}^0,$ then an similar argument as above leads to $c(I) \leq c(I_0).$ In fact, for each $n,$ define the path
	\begin{equation*}
	\lambda _n (t) = w_0 \left(\frac{\cdot - y_n}{t}\right) ,\quad t \geq 0,
	\end{equation*}
	where $(y_n)$ is taken in a such way that $|y_n|\rightarrow\infty.$ As before, we consider the estimate
	\begin{equation*}
	\left| I( \lambda _n (t) ) - I_0 (w_0 (\cdot / t )) \right| \leq t^N\int _{\mathbb{R}^N } \left| F(tx + y_n,w_0) - F_0 (w_0) \right| \dx,
	\end{equation*}
to obtain that
	\begin{equation*}
	\lim _{n \rightarrow \infty } I( \lambda _n (t) ) = I_0 (w_0(\cdot / t )),\quad \text{uniformly in compact sets of }\mathbb{R}.
	\end{equation*}
	We also have that the path $\lambda _n$ belongs to $\Gamma_I,$ for $n$ large enough. Indeed, assuming the contrary, we would obtain $n_0$ and a sequence $l_n \rightarrow \infty$ such that $I(\lambda_{n_0} (l_n)) > 0,$ for all $n.$ On the other hand, we have that
	\begin{equation*}
	\lim_{n \rightarrow \infty} \int_{\mathbb{R}^N}F(l_nx+y_{n_0},w_0)\dx = \int_{\mathbb{R}^N}F_0(w_0)  \dx,
	\end{equation*}
	which, by taking $n$ large enough, leads to the contradiction $I(\lambda_{n_0} (l_n)) < 0$. Let $t_n>0$ such that 
	\begin{equation*}
	I(\lambda _n (t_n) )= \max_{t \geq 0} I(\lambda _n (t))>0.
	\end{equation*}
	Once again we get that the sequence $(t_n)$ is bounded. On the contrary, there is a subsequence $(t_{k_n})$ that implies in the following contradiction
	\begin{equation*}
	0<I(\lambda_n (t_{k_n}))= \frac{1}{2} t_{k_n}^{N-2s} [w_0]_s ^2 -t_{k_n}^N \int_{\mathbb{R}^N}F(t_{k_n}x+y_n,w_0) \dx \rightarrow - \infty,\text{ as }n\rightarrow \infty.
	\end{equation*}
	Thus, up to a subsequence, $t_n\rightarrow t_0$ and we obtain that
	\begin{equation*}
	\lim_{n\rightarrow \infty}\max _{t \geq 0} I(\lambda _n (t))=I _0 (w_0(\cdot/t_0)).
	\end{equation*}
	As a consequence we conclude that
	\begin{equation*}
	c(I) \leq \lim_{n\rightarrow \infty}\max _{t \geq 0} I(\lambda _n (t_n)) \leq \max _{t \geq 0} I_0 (w_0(\cdot / t)) = c(I_0),
	\end{equation*}
	where we used Proposition \ref{prop_pohozaev} to induce that $t=1$ is the unique critical point of $ I_0 (w_0(\cdot / t)).$ Thus, let us assume that $\mathcal{S} _{l_0}^0$ is not attained. By the Remarks \ref{1rmax+-} and \ref{2rmax+-}; and Theorem \ref{primeira}, if $\mathcal{S} _{l_0}^0 $ is not attained then $\mathcal{S} _{l_0} ^0 = \mathcal{S} _{l_0} ^+$ or $\mathcal{S}_{l_0} ^0 = \mathcal{S}_{l_0} ^{-}.$ Thus, using the definition of $\mathcal{S} _{l_0}^0$ we get
	\begin{equation*}
	c(I _\kappa) \leq I_0 (u),\ \kappa=+,-, \; \forall u \in \mathcal{D}^{s,2} (\mathbb{R}^N)\text{ with } \|u\|^2 = l_0.
	\end{equation*}
Let $u \in \mathcal{D}^{s,2} (\mathbb{R}^N),\ u\neq 0,$ and denote $\alpha=\|u\|^2,$ then considering the rescaling $u_{l_0}(x) = u(t_0 x),$ where $t_0 = (\alpha/l_0) ^{-1/(N-2s)},$ we have $\| u _{l_0}\|^2 = l_0$ and consequently 
\begin{align*}
c(I_{\kappa} ) \leq I(u_{l_0}) &=\frac{1}{2}t_0^{N-2s} \|u\| ^2 - t_0 ^N  \Phi _0(u)\\
&\leq \max _{t\geq 0} I_0( \zeta _u (t)),\text{ for }\kappa=+,-.
\end{align*}
By Lemma \ref{novolemma} we conclude $c(I_{\kappa} ) \leq c(I_0),\ \kappa=+,-.$\\
Now suppose that \eqref{f'_6} holds. As seen above, $\zeta_{w_\kappa}$ belongs to $\Gamma_I,$ thus
\begin{equation*}
c(I) \leq \max _{t \geq 0} I(\zeta _{w_\kappa} (t) ) <  \max _{t \geq 0} I _\kappa (\zeta_{w_\kappa} (t))=c(I_\kappa),\ \kappa=+,-.
\end{equation*}
We claim that $S_{l_0} ^0$ is attained, from which we conclude the desired inequality in \eqref{suficient}. Assume the contrary, by arguing as before, we have $\mathcal{S} _{l_0} ^0 = \mathcal{S} _{l_0} ^+$ or $\mathcal{S}_{l_0} ^0 = \mathcal{S}_{l_0} ^{-}.$ Taking $|x|\rightarrow  \infty$ in \eqref{f'_6} we get that $F_0 (t) \geq F_{\kappa} (t),\ \kappa=+,-,$ for all $t \in \mathbb{R}.$ Consequently, in any case,
\begin{align}\label{last_contra}
\int _{\mathbb{R}^N} F_0 (w_\kappa) \dx &\leq \sup _{\|u\|^2=l_0} \int _{\mathbb{R}^N} F _0 ( u ) \dx \nonumber\\
&=\int _{\mathbb{R}^N} F _\kappa (w _\kappa) \dx \leq \int _{\mathbb{R}^N} F _0 (w _\kappa) \dx,\ \kappa=+,-,
\end{align}
a contradiction, because relation \eqref{last_contra} implies that $S^0 _{l_0}$ is attained.\end{proof}
Summarizing all the discussion until now we can finally prove Theorem \ref{principal}.
\subsection{Proof of Theorem~\ref{principal}}
In order to treat the case without compactness condition \eqref{suficient}, that is not considered in the local counterpart \cite{tintapaper}, where the case $c(I_\kappa)=c(I),$ $\kappa=0,+,-,$ may occur, we need the following result, which states that the existence of a critical point of $I$ is guaranteed whenever the minimax level \eqref{minimax} is attained.
\begin{theoremletter}\cite[Theorem~2.3]{lins}\label{p_local_MP}
Let $E$ be a real Banach space. Suppose that $I \in C^1(E)$ satisfies
\begin{enumerate}[label=(\roman*)]
\item $I(0)=0;$
\item There exists $r,\ b>0$ such that $I(u) \geq b,$ whenever $\|u\| = r; $
\item There is $e\in E$ with $\|e\| > r$ and $I(e)<0;$ 
\end{enumerate}
Let
\begin{equation*}
c(I) = \inf_{\gamma \in \Gamma} \sup_{t\in [0,1]} I(\gamma (t)),
\end{equation*}
where
\begin{equation*}
\Gamma = \left\lbrace \gamma \in C([0,1], E) : \gamma (0) = 0,\ \|\gamma(1)\| > r,\ I(\gamma (1))<0 \right\rbrace .
\end{equation*}
If there exists $\gamma _0 \in \Gamma$ such that
\begin{equation*}
c = \max _{t \in [0,1]} I(\gamma _0 (t) ),
\end{equation*}
then $I$ possess a non-trivial critical point $u\in \gamma _0 ([0,1]) $ such that $I(u) = c.$
\end{theoremletter}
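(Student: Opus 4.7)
The plan is a classical mountain-pass deformation argument applied to the specific path $\gamma_0$. Assume for contradiction that every point of
\begin{equation*}
M := \{u \in \gamma_0([0,1]) : I(u) = c\}
\end{equation*}
satisfies $I'(u) \neq 0$. I will construct $\gamma_1 \in \Gamma$ with $\max_{t} I(\gamma_1(t)) < c$, contradicting $c = \inf_{\Gamma} \max I$; hence some $u \in M$ must be critical, yielding the desired point on $\gamma_0([0,1])$.

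First I would verify that $c \geq b > 0$ and that the endpoints of $\gamma_0$ lie strictly below level $c$. For any $\gamma \in \Gamma$, continuity together with $\|\gamma(0)\| = 0 < r < \|\gamma(1)\|$ produces $t^\ast \in (0,1)$ with $\|\gamma(t^\ast)\| = r$, so $\max_t I(\gamma(t)) \geq I(\gamma(t^\ast)) \geq b$. In particular $I(\gamma_0(0)) = 0 < c$ and $I(\gamma_0(1)) < 0 < c$, hence $\gamma_0(0), \gamma_0(1) \notin M$. Since $M$ is compact (closed in the compact set $\gamma_0([0,1])$) and $I'$ is continuous with $I' \neq 0$ on $M$, a finite open cover of $M$ yields a bounded open neighborhood $U \supset M$ and $\delta > 0$ such that $\|I'(u)\| \geq \delta$ on $\overline{U}$. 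Shrinking $U$ if necessary, I assume $\gamma_0(0), \gamma_0(1) \notin \overline{U}$. By compactness of $[0,1] \setminus \gamma_0^{-1}(U)$ and continuity of $I \circ \gamma_0$, there exists $\epsilon_0 > 0$ such that $I(\gamma_0(t)) \geq c - \epsilon_0$ implies $\gamma_0(t) \in U$.

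Next I would invoke the standard quantitative deformation lemma for $C^1$ functionals on Banach spaces, built from a pseudo-gradient vector field of $I$ multiplied by a Lipschitz cutoff supported in $U$. This produces $\epsilon \in (0,\epsilon_0)$ and a continuous map $\eta : [0,1] \times E \to E$ satisfying $\eta(s, \cdot) = \mathrm{Id}_E$ outside $U$, $I \circ \eta(1, \cdot) \leq I$ pointwise, and
\begin{equation*}
u \in U \text{ with } I(u) \leq c + \epsilon \quad \Longrightarrow \quad I(\eta(1,u)) \leq c - \epsilon.
\end{equation*}
Define $\gamma_1(t) := \eta(1, \gamma_0(t))$. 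The endpoints are preserved because $\gamma_0(\{0,1\}) \cap U = \emptyset$, so $\gamma_1 \in \Gamma$. For each $t$: either $\gamma_0(t) \notin U$ and then $I(\gamma_1(t)) = I(\gamma_0(t)) < c - \epsilon_0 \leq c - \epsilon$, or $\gamma_0(t) \in U$ with $I(\gamma_0(t)) \leq c$ and the deformation gives $I(\gamma_1(t)) \leq c - \epsilon$. In both cases $\max_t I(\gamma_1(t)) \leq c - \epsilon < c$, the required contradiction.

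The principal obstacle is the three-parameter calibration of $(U, \delta, \epsilon_0)$: the neighborhood $U$ must be tight enough around $M$ to enforce a uniform lower bound $\|I'\| \geq \delta$ and to avoid the endpoints of $\gamma_0$, yet wide enough that every $\gamma_0(t)$ with $I$-value within $\epsilon_0$ of $c$ still lies in $U$. Once these choices are tuned consistently, the pseudo-gradient flow cut off by a Lipschitz bump supported in $U$ produces the homotopy $\eta$ with the quantitative descent property, and the argument closes as above. Note that the hypothesis that the maximum along $\gamma_0$ \emph{equals} $c$ (rather than merely $\geq c$) is essential: it is what forces $M$ to be nonempty and confines the critical point to $\gamma_0([0,1])$.
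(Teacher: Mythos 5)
The paper offers no proof of this statement to compare against: it is imported verbatim from \cite[Theorem~2.3]{lins} (note the citation in the theorem header), so your argument has to be judged on its own. Your deformation strategy is the standard one for results of this type and is essentially sound: the reductions $c\geq b>0$ (intermediate value theorem applied to $t\mapsto\|\gamma(t)\|$), the endpoints of $\gamma_0$ lying strictly below level $c$, the nonemptiness and compactness of $M$, the uniform bound $\|I'\|\geq\delta$ on a closed neighborhood of $M$, and the compactness argument producing $\epsilon_0$ are all correct, and the final case analysis for $\gamma_1=\eta(1,\gamma_0(\cdot))$ delivers the contradiction with $c=\inf_{\Gamma}\sup I$.

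The one step you must tighten is the statement of the deformation lemma. As written --- $\eta(s,\cdot)=\mathrm{Id}$ outside $U$ together with ``$u\in U$, $I(u)\leq c+\epsilon\Rightarrow I(\eta(1,u))\leq c-\epsilon$'' --- is stronger than what any quantitative deformation lemma provides, and in general no such $\eta$ exists: by continuity, points of $U$ accumulating at $\partial U$ with values in $(c-\epsilon,c+\epsilon]$ are moved arbitrarily little, so their images cannot all drop below $c-\epsilon$, and nothing prevents $\partial U$ from meeting that level band at points away from $\gamma_0([0,1])$. The correct formulation (e.g. Willem's quantitative deformation lemma) has the usual nested structure: pick $\rho>0$ with the closed $3\rho$-neighborhood of $M$ inside $U$, set $S$ equal to the $\rho$-neighborhood of $M$, use the gradient bound $\|I'\|\geq\delta$ on $S_{2\rho}\subset\overline{U}$, choose $\epsilon<\min\{\epsilon_1,\ \delta\rho/8,\ c/2\}$, and obtain descent only for $u\in S$ with $I(u)\leq c+\epsilon$, the map being the identity outside $I^{-1}([c-2\epsilon,c+2\epsilon])\cap S_{2\rho}$ (this also fixes the endpoints, since their levels are $\leq 0<c-2\epsilon$). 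Correspondingly, your compactness step must be run with $S$ in place of $U$: there is $\epsilon_1>0$ such that $I(\gamma_0(t))\geq c-\epsilon_1$ forces $\dist(\gamma_0(t),M)<\rho$, proved by exactly the argument you gave for $U$. With these replacements your construction of $\gamma_1$ and the concluding contradiction go through verbatim, so the proof is complete after this routine repair.
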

\begin{remark}
We define
\begin{equation*}
c_1(I)= \inf_{\gamma \in \Gamma_I } \sup_{t \in [0,1]} I(\gamma (t)),
\end{equation*}
where
\begin{equation*}
\Gamma^1 _{I} = \left\lbrace \gamma \in C([0,1],  \mathcal{D}^{s,2}(\mathbb{R} ^N) ) : \gamma (0) = 0,\ \|\gamma(1)\| > r,\ I(\gamma (1))<0 \right\rbrace,
\end{equation*}
as the usual minimax level. We have that $c_1(I)= c(I).$
\end{remark}
\begin{proof}[Proof of Theorem~\ref{principal} completed] For the readers convenience, we divide the proof in several steps. 

(i) Let us assume first that condition \eqref{suficient} holds true. We start observing that the assumptions \eqref{AR} and \eqref{posi_algum} implies that the functional $I$ has the mountain pass geometry. In particular, $\Gamma _I \neq \emptyset$ and $0<c(I)<\infty.$ In fact, set $v=\varphi_R(x) := v_R(|x-x_0|),$ where $v_R$ as defined as in Remark \ref{remark_positivo}. Then $\varphi_R \in \mathcal{D}^{s,2} (\mathbb{R}^N )$ and we have
\begin{align*}
\int_{\mathbb{R}^N} F(x,v ) \dx &= \int _{B_R (x_0)} F(x,t_0) \dx + \int_{B_{R+1}(x_0)\setminus B_R(x_0)} F(x,v) \dx\\
&\geq |B_R|\inf _{B_R(x_0)} F(x,t_0) + |B_{R+1}\setminus B_R| \inf_{(x,t) \in (B_{R+1}(x_0) \setminus B_R(x_0)) \times [0,t_0]} F(x,t) >0
\end{align*}
Since \eqref{AR} is equivalent to $d/dt (F(x,t)t ^{-\mu}) \geq 0,$ $t>0,$ we have for $t>1$ that
\begin{equation*}
\int_{\mathbb{R}^N}F(x,tv)\dx \geq t^\mu \int_{\mathbb{R}^N} F(x,v)\dx.
\end{equation*}
Hence
\begin{equation*}
I(tv)=\frac{t^2}{2}\|v\|^2 - \int_{\mathbb{R}^N} F(x,tv)\dx \leq \frac{t^2}{2}\|v\|^2 - t^\mu \int_{\mathbb{R}^N} F(x,v)\dx \rightarrow - \infty,\text{ as }t\rightarrow \infty.
\end{equation*}
In the other hand, by the growth condition \eqref{f_2} and the embedding \eqref{embds},
\begin{equation*}
I(u)\geq \|u\|^2 \left( \frac{1}{2} - C \|u\| ^{2_s ^{\ast }-2} \right),\quad u \in \mathcal{D}^{s,2}(\mathbb{R}^N),
\end{equation*}
for some constant $C>0.$ Thus, choosing $\|u\|$ sufficiently small, we have $I(u)>0.$ The same can be concluded for the functionals $I_\kappa,$ since $F_\kappa$ satisfies \eqref{AR} and \eqref{posi_algum}.\\
Let $(u_k)$ in $\mathcal{D}^{s,2} (\mathbb{R}^N)$ be such that $I(u_k) \rightarrow c(I)$ and $I'(u_k) \rightarrow 0,$ which the existence can be guaranteed by the Mountain Pass Theorem (see \cite{ambrorab}).

(ii) By assumption \eqref{AR}, this sequence is bounded in $\mathcal{D}^{s,2} (\mathbb{R}^N),$ since for large $k,$ we have
\begin{align*}
c(I)+1+\| u_ k \|  &\geq I(u_k) - \frac{1}{\mu} I'(u_k) \cdot u_k \\
&=\left(\frac{1}{2} - \frac{1}{\mu} \right) \|u_k\| ^2 - \int _{\mathbb{R}^N} F(x,u_k) - \frac{1}{\mu} f(x,u_k) u_k \dx\\
&\geq \left( \frac{1}{2} - \frac{1}{\mu } \right) \| u_k \| ^2.
\end{align*}
Let $(w ^{(n)}),$ $(y_k ^{(n)})$ and $(j _k ^{(n)})$ be the sequences provided by Theorem \ref{teo_tinta_frac}. If $w^{(n)}= 0$ for all $n\geq 2,$ then by assertion \eqref{seis.quatro} and \eqref{seis.um}, 
\begin{equation*}
u_k \rightarrow w^{(1)}\text{ in } L^{2_s ^\ast} (\mathbb{R}^N) \text{ and }u_k \rightharpoonup w^{(1)}\text{ in }\mathcal{D}^{s,2}(\mathbb{R}^N).
\end{equation*}
Therefore we conclude that $w^{(1)}$ is a critical point of $I$ such that, up to a subsequence, $u_k \rightarrow w^{(1)}$ in $\mathcal{D}^{s,2} (\mathbb{R}^N).$

(iii) Let us argue by contradiction and assume that there exists $n_0\geq 2,$ such that $w^{(n_0)} \neq 0.$ By the estimate \eqref{seis.tres} and Proposition \ref{lemma2} we have, up to a subsequence, that
\begin{align}
c(I)&=\lim _{k \rightarrow \infty}\left[  \frac{1}{2}\|u_k\|^2 - \int_{\mathbb{R}^N} F(x,u_k) \dx \right]  \nonumber \\
&\geq I(w^{(1)}) + \sum _{n \in \mathbb{N} _0,n >1} I_0 (w^{(n)}) + \sum _{n \in \mathbb{N} _{+}} I_+ (w^{(n)}) + \sum _{n \in \mathbb{N} _{-} } I _{-} (w^{(n)})\label{liminf}.
\end{align}
Let $\varphi \in C _0 ^\infty (\mathbb{R}^N)$ and $n \geq 1.$ Since
\begin{equation*}
\left| \gamma ^{-\frac{N+2s}{2}j_k ^{(n)}} f\left(\gamma ^{-j_k ^{(n)}}x + y_k ^{(n)} ,\gamma ^{\frac{N-2s}{2} j_k ^{(n)}} t  \right) \right| \leq C |t|^{2_s ^\ast -1}, \; \forall x \in \mathbb{R}^N\text{ and }t\in \mathbb{R},
\end{equation*}
by the embedding $\eqref{comp_loc},$ we can take the limit
\begin{align*}
& I' (u_k) \cdot \left(\gamma ^{\frac{N-2s}{2} j_k ^{(n)}} \varphi (\gamma^{j _k ^{(n)}} ( \cdot - y_k^{(n)} ) ) \right)  \\
&= \left(\gamma ^{-\frac{N-2s}{2}j_k ^{(n)}}u_k (\gamma ^{- j_k ^{(n)}} \cdot + y_k^{(n)} ) , \varphi \right)
- \int _{\mathbb{R}^N} \gamma ^{-\frac{N+2s}{2}j_k ^{(n)}} f\left(\gamma ^{-j_k ^{(n)}}x + y_k ^{(n)} ,\gamma ^{\frac{N-2s}{2} j_k ^{(n)}} v_k ^{(n)}  \right)\varphi \,\dx ,
\end{align*}
where
\begin{equation*}
v_k ^{(n)}(x):= \gamma ^{-\frac{N-2s}{2} j_k ^{(n)}} u_k (\gamma ^{-j_k ^{(n)}}x + y_k ^{(n)}  ),
\end{equation*}
to conclude that $w^{(1)}$ is a critical point of $I$ and $w^{(n)}$ is a critical point of $I_0, I_{+}$ or $I_{-},$ provided that $n \in \mathbb{N}_0, \mathbb{N}_+$ or $\mathbb{N}_{-},$ respectively. Consequently, using assumption \eqref{AR}
\begin{equation*}
I_\kappa  (w^{(n)}) = \frac{1}{2} \int _{\mathbb{R} ^N} f_\kappa(w^{(n)})w^{(n)} \dx - \int_{\mathbb{R}^N} F_\kappa (w^{(n)}) \dx \geq 0,\; \forall n\geq 2,
\end{equation*}
and $I(w^{(1)})\geq 0.$ On the other hand, the assumption $c(I) < c(I_\kappa)$ and the estimate \eqref{liminf} implies $I_ \kappa (w^{(n_0)}) < c(I_ \kappa),$ which leads to a contradiction with Theorem \ref{segunda}.

(iv) Suppose now that relation \eqref{desigualdade_geral} holds instead of \eqref{suficient}. Condition \eqref{desigualdade_geral} implies that the path $\zeta_{w^{(n_0)}}$ belongs to $\Gamma_I$ and $c(I) \leq I\kappa (w^{(n_0)}),$ where $\kappa$ is the corresponding index for which $n_0$ belongs. In view of the above discussion and estimate \eqref{liminf}, we conclude that
\begin{equation*}
u_k \rightarrow w^{(1)}\text{ in a subsequence}\quad \text{or}\quad c(I) = \max _{t \geq 0} I(w^{(n_0)} (\cdot / t)).
\end{equation*}
If the minimax level $c(I)$ is attained then we can apply Proposition \ref{p_local_MP} to obtain the existence of a critical point $u \in \zeta _{w^{(n_0)}}([0,\infty )) $ such that $I(u) = c(I).$
\end{proof}


\begin{thebibliography}{10}
	
	\bibitem{ambrorab}
	A.~Ambrosetti and P.~H. Rabinowitz, Dual variational methods in critical point
	theory and applications, \emph{J. Functional Analysis} {\bf 14} (1973)
	349--381.
	
	\bibitem{levy2}
	D.~Applebaum, \emph{L\'evy processes and stochastic calculus
		\normalfont(Cambridge University Press, Cambridge, 2009)}.
	
	\bibitem{levy1}
	D.~Applebaum, L\'evy processes---from probability to finance and quantum
	groups, \emph{Notices Amer. Math. Soc.} {\bf 51} (2004) 1336--1347.
	
	\bibitem{barrios}
	B.~Barrios, E.~Colorado, A.~de~Pablo and U.~S{\'a}nchez, On some critical
	problems for the fractional {L}aplacian operator, \emph{J. Differential
		Equations} {\bf 252} (2012) 6133--6162.
	
	\bibitem{berlions}
	H.~Berestycki and P.-L. Lions, Nonlinear scalar field equations. {I}.
	{E}xistence of a ground state, \emph{Arch. Rational Mech. Anal.} {\bf 82}
	(1983) 313--345.
	
	\bibitem{berlions-II}
	H.~Berestycki and P.-L. Lions, Nonlinear scalar field equations. {II}.
	{E}xistence of infinitely many solutions, \emph{Arch. Rational Mech. Anal.}
	{\bf 82} (1983) 347--375.
	
	\bibitem{global_brasco}
	L.~Brasco, M.~Squassina and Y.~Yang, Global compactness results for nonlocal
	problems https://arxiv.org/abs/1603.03597, \emph{Preprint} .
	
	\bibitem{breziskato}
	H.~Br{\'e}zis and T.~Kato, Remarks on the {S}chr\"odinger operator with
	singular complex potentials, \emph{J. Math. Pures Appl. (9)} {\bf 58} (1979)
	137--151.
	
	\bibitem{brezis-lieb}
	H.~Br{\'e}zis and E.~Lieb, A relation between pointwise convergence of
	functions and convergence of functionals, \emph{Proc. Amer. Math. Soc.} {\bf
		88} (1983) 486--490.
	
	\bibitem{constant}
	K.~Brezis, How to recognize constant functions. {A} connection with {S}obolev
	spaces, \emph{Uspekhi Mat. Nauk} {\bf 57} (2002) 59--74.
	
	\bibitem{levy3}
	L.~Caffarelli, Non-local diffusions, drifts and games, \emph{Nonlinear partial
		differential equations}, \emph{Abel Symp.}, vol.~7, Springer, Heidelberg
	(2012) 37--52.
	
	\bibitem{caf_silv}
	L.~Caffarelli and L.~Silvestre, An extension problem related to the fractional
	{L}aplacian, \emph{Comm. Partial Differential Equations} {\bf 32} (2007)
	1245--1260.
	
	\bibitem{chang-wang}
	X.~Chang and Z.-Q. Wang, Ground state of scalar field equations involving a
	fractional {L}aplacian with general nonlinearity, \emph{Nonlinearity} {\bf
		26} (2013) 479--494.
	
	\bibitem{cocompact}
	M.~Cwikel and K.~Tintarev, On interpolation of cocompact imbeddings, \emph{Rev.
		Mat. Complut.} {\bf 26} (2013) 33--55.
	
	\bibitem{delpino}
	J.~D{\'a}vila, M.~del Pino, S.~Dipierro and E.~Valdinoci, Concentration
	phenomena for the nonlocal {S}chr\"odinger equation with {D}irichlet datum,
	\emph{Anal. PDE} {\bf 8} (2015) 1165--1235.
	
	\bibitem{hitchhiker}
	E.~Di~Nezza, G.~Palatucci and E.~Valdinoci, Hitchhiker's guide to the
	fractional {S}obolev spaces, \emph{Bull. Sci. Math.} {\bf 136} (2012)
	521--573.
	
	\bibitem{pala_vald_sch}
	S.~Dipierro, G.~Palatucci and E.~Valdinoci, Existence and symmetry results for
	a {S}chr\"odinger type problem involving the fractional {L}aplacian,
	\emph{Matematiche (Catania)} {\bf 68} (2013) 201--216.
	
	\bibitem{fabes}
	E.~B. Fabes, C.~E. Kenig and R.~P. Serapioni, The local regularity of solutions
	of degenerate elliptic equations, \emph{Comm. Partial Differential Equations}
	{\bf 7} (1982) 77--116.
	
	\bibitem{moustapha}
	M.~M. Fall and V.~Felli, Unique continuation property and local asymptotics of
	solutions to fractional elliptic equations, \emph{Comm. Partial Differential
		Equations} {\bf 39} (2014) 354--397.
	
	\bibitem{fqt}
	P.~Felmer, A.~Quaas and J.~Tan, Positive solutions of the nonlinear
	{S}chr\"odinger equation with the fractional {L}aplacian, \emph{Proc. Roy.
		Soc. Edinburgh Sect. A} {\bf 142} (2012) 1237--1262.
	
	\bibitem{frank}
	R.~L. Frank and E.~Lenzmann, Uniqueness of non-linear ground states for
	fractional {L}aplacians in {$\Bbb{R}$}, \emph{Acta Math.} {\bf 210} (2013)
	261--318.
	
	\bibitem{gerard}
	P.~G{\'e}rard, Description du d\'efaut de compacit\'e de l'injection de
	{S}obolev, \emph{ESAIM Control Optim. Calc. Var.} {\bf 3} (1998) 213--233
	(electronic).
	
	\bibitem{jaffard}
	S.~Jaffard, Analysis of the lack of compactness in the critical {S}obolev
	embeddings, \emph{J. Funct. Anal.} {\bf 161} (1999) 384--396.
	
	\bibitem{frac_niremberg}
	T.~Jin, Y.~Li and J.~Xiong, On a fractional {N}irenberg problem, part {I}: blow
	up analysis and compactness of solutions, \emph{J. Eur. Math. Soc. (JEMS)}
	{\bf 16} (2014) 1111--1171.
	
	\bibitem{landkof}
	N.~S. Landkof, \emph{Foundations of modern potential theory}, Springer-Verlag,
	New York-Heidelberg (1972), translated from the Russian by A. P. Doohovskoy,
	Die Grundlehren der mathematischen Wissenschaften, Band 180.
	
	\bibitem{lins}
	H.~F. Lins and E.~A.~B. Silva, Quasilinear asymptotically periodic elliptic
	equations with critical growth, \emph{Nonlinear Anal.} {\bf 71} (2009)
	2890--2905.
	
	\bibitem{lionscompcase1}
	P.-L. Lions, The concentration-compactness principle in the calculus of
	variations. {T}he locally compact case. {I}, \emph{Ann. Inst. H. Poincar\'e
		Anal. Non Lin\'eaire} {\bf 1} (1984) 109--145.
	
	\bibitem{lionscompcase2}
	P.-L. Lions, The concentration-compactness principle in the calculus of
	variations. {T}he locally compact case. {II}, \emph{Ann. Inst. H. Poincar\'e
		Anal. Non Lin\'eaire} {\bf 1} (1984) 223--283.
	
	\bibitem{lionslimitcase1}
	P.-L. Lions, The concentration-compactness principle in the calculus of
	variations. {T}he limit case. {I}, \emph{Rev. Mat. Iberoamericana} {\bf 1}
	(1985) 145--201.
	
	\bibitem{lionslimitcase2}
	P.-L. Lions, The concentration-compactness principle in the calculus of
	variations. {T}he limit case. {II}, \emph{Rev. Mat. Iberoamericana} {\bf 1}
	(1985) 45--121.
	
	\bibitem{molvc}
	S.~A. Mol{\v{c}}anov and E.~Ostrovski{\u\i}, Symmetric stable processes as
	traces of degenerate diffusion processes., \emph{Teor. Verojatnost. i
		Primenen.} {\bf 14} (1969) 127--130.
	
	\bibitem{nekvinda}
	A.~Nekvinda, Characterization of traces of the weighted {S}obolev space
	{$W^{1,p}(\Omega,d^\epsilon_M)$} on {$M$}, \emph{Czechoslovak Math. J.} {\bf
		43(118)} (1993) 695--711.
	
	\bibitem{palatucci}
	G.~Palatucci and A.~Pisante, Improved {S}obolev embeddings, profile
	decomposition, and concentration-compactness for fractional {S}obolev spaces,
	\emph{Calc. Var. Partial Differential Equations} {\bf 50} (2014) 799--829.
	
	\bibitem{global_pala}
	G.~Palatucci and A.~Pisante, A global compactness type result for
	{P}alais-{S}male sequences in fractional {S}obolev spaces, \emph{Nonlinear
		Anal.} {\bf 117} (2015) 1--7.
	
	\bibitem{minimax}
	P.~H. Rabinowitz, \emph{Minimax methods in critical point theory with
		applications to differential equations \normalfont(Conference Board of the
		Mathematical Sciences, Washington, 1986)}.
	
	\bibitem{Rabi1992}
	P.~H. Rabinowitz, On a class of nonlinear {S}chr\"odinger equations, \emph{Z.
		Angew. Math. Phys.} {\bf 43} (1992) 270--291.
	
	\bibitem{rosoton}
	X.~Ros-Oton and J.~Serra, The {P}ohozaev identity for the fractional
	{L}aplacian, \emph{Arch. Ration. Mech. Anal.} {\bf 213} (2014) 587--628.
	
	\bibitem{servadei_MP}
	R.~Servadei and E.~Valdinoci, Mountain pass solutions for non-local elliptic
	operators, \emph{J. Math. Anal. Appl.} {\bf 389} (2012) 887--898.
	
	\bibitem{bounded}
	R.~Servadei and E.~Valdinoci, Variational methods for non-local operators of
	elliptic type, \emph{Discrete Contin. Dyn. Syst.} {\bf 33} (2013) 2105--2137.
	
	\bibitem{brezis-niremberg}
	R.~Servadei and E.~Valdinoci, The {B}rezis-{N}irenberg result for the
	fractional {L}aplacian, \emph{Trans. Amer. Math. Soc.} {\bf 367} (2015)
	67--102.
	
	\bibitem{silvestre}
	L.~Silvestre, Regularity of the obstacle problem for a fractional power of the
	{L}aplace operator, \emph{Comm. Pure Appl. Math.} {\bf 60} (2007) 67--112.
	
	\bibitem{solimini}
	S.~Solimini, A note on compactness-type properties with respect to {L}orentz
	norms of bounded subsets of a {S}obolev space, \emph{Ann. Inst. H. Poincar\'e
		Anal. Non Lin\'eaire} {\bf 12} (1995) 319--337.
	
	\bibitem{struwe}
	M.~Struwe, A global compactness result for elliptic boundary value problems
	involving limiting nonlinearities, \emph{Math. Z.} {\bf 187} (1984) 511--517.
	
	\bibitem{tinta_pos}
	K.~Tintarev, Positive solutions of elliptic equations with a critical
	oscillatory nonlinearity, \emph{Discrete Contin. Dyn. Syst.}  (2007)
	974--981.
	
	\bibitem{tintapaper}
	K.~Tintarev, Concentration compactness at the mountain pass level in semilinear
	elliptic problems, \emph{NoDEA Nonlinear Differential Equations Appl.} {\bf
		15} (2008) 581--598.
	
	\bibitem{tintabook}
	K.~Tintarev and K.-H. Fieseler, \emph{Concentration compactness \normalfont
		(Imperial College Press, London, 2007)}.
	
	\bibitem{turesson}
	B.~O. Turesson, \emph{Nonlinear potential theory and weighted {S}obolev
		spaces}, \emph{Lecture Notes in Mathematics}, vol. 1736, Springer-Verlag,
	Berlin (2000).
	
	\bibitem{aliang}
	A.~Xia and J.~Yang, Regularity of nonlinear equations for fractional
	{L}aplacian, \emph{Proc. Amer. Math. Soc.} {\bf 141} (2013) 2665--2672.
	
	\bibitem{DOO-SQUASSINA-ZHANG}
	J.~Zhang, J.~M. do~{\'O} and M.~Squassina, Fractional
	{S}chr\"odinger--{P}oisson {S}ystems with a {G}eneral {S}ubcritical or
	{C}ritical {N}onlinearity, \emph{Adv. Nonlinear Stud.} {\bf 16} (2016)
	15--30.
	
\end{thebibliography}
\end{document}